\numberwithin{equation}{section}
\theoremstyle{plain}
\newtheorem{Lemma}{Lemma}[section]
\newtheorem{Proposition}[Lemma]{Proposition}
\newtheorem{Theorem}[Lemma]{Theorem}
\newtheorem{Corollary}[Lemma]{Corollary}
\theoremstyle{definition}
\newtheorem{Definition}[Lemma]{Definition}
\newtheorem{Example}[Lemma]{Example}
\newtheorem{Examples}[Lemma]{Examples}
\newtheorem{Remark}[Lemma]{Remark}
\def\oe{\"{o}}
\def\quoteleft{`}
\def\B{{\rm B}}
\def\R{{\mathbb{R}}}
\def\N{{\mathbb{N}}}
\DeclareMathOperator*{\esssup}{ess\,sup}
\begin{document}
\title{\textbf{Stability, uniqueness and existence of solutions to McKean-Vlasov SDEs: a multidimensional Yamada-Watanabe approach}}
\date{June 12, 2024}
\author{Alexander Kalinin\footnote{Department of Mathematics, LMU Munich, Germany. {\tt kalinin@math.lmu.de}, {\tt meyerbra@math.lmu.de}} \and Thilo Meyer-Brandis\footnotemark[1]
\and Frank Proske\setcounter{footnote}{6}\footnote{Department of Mathematics, University of Oslo, Norway. {\tt proske@math.uio.no}}
}
\maketitle

\begin{abstract}
We establish stability and pathwise uniqueness of solutions to Wiener noise driven McKean-Vlasov equations with random non-Lipschitz continuous coefficients. In the deterministic case, we also obtain the existence of unique strong solutions. By using our approach, which is based on an extension of the Yamada-Watanabe ansatz to the multidimensional setting and which does not rely on the construction of Lyapunov functions, we prove first moment and pathwise exponential stability. Furthermore, Lyapunov exponents are computed explicitly.
\end{abstract}

\noindent
{\bf MSC2020 classification:} 60H20, 60H30, 60F25, 37H30, 45M10.\\
{\bf Keywords:}  McKean-Vlasov equation, Lyapunov stability, moment estimate, asymptotic behaviour, pathwise uniqueness, strong solution, non-Lipschitz coefficient, It{\^o} process.

\section{Introduction}\label{se:1}

In this work, let $d,m\in\N$ and $(\Omega ,\mathcal{F},(\mathcal{F}_{t})_{t\geq 0},P)$ be a
filtered probability space that satisfies the usual conditions and carries a standard $d$-dimensional $(\mathcal{F}_{t})_{t\geq 0}$-Brownian motion $W$. McKean-Vlasov stochastic differential equations (McKean-Vlasov SDEs), or alternatively mean-field SDEs, are integral equations of the form 
\begin{equation}\label{eq:MFSDE}
X_{t}=X_{0}+\int_{0}^{t}\!b\big(s,X_{s},\mathcal{L}(X_{s})\big)\,ds + \int_{0}^{t}\!\sigma\big(s,X_{s},\mathcal{L}(X_{s})\big)\,dW_{s}\quad\text{for $t\geq 0$ a.s.}
\end{equation}
with the drift and diffusion coefficients $b$ and $\sigma$ defined on $\mathbb{R}_{+}\times \mathbb{R}^{m}\times \mathcal{P}(\mathbb{R}^{m})$ and taking their respective values in $\R^{m}$ and $\R^{m\times d}$, where $\mathcal{P}(\R^{m})$ denotes the convex space of all Borel probability measures on $\mathbb{R}^{m}$. 

Originally motivated by Boltzmann's equation in kinetic gas theory, McKean-Vlasov equations were first studied by Kac~\cite{Kac56}, McKean~\cite{McK66} and Vlasov~\cite{Vlas68} and utilised to describe the stochastic dynamics of large interacting particle systems. In particular, Vlasov analysed the propagation of chaos of charged particles with long-range interaction in a plasma, where the interaction of particles is modelled by means of a system of SDEs. It turns out that the convergence of such an $N$-particle system, as the positive integer $N$ tends to infinity, which is referred to as propagation of chaos, can be
described by an equation of the form~\eqref{eq:MFSDE}.

Since the seminal papers~\cite{Kac56},~\cite{McK66} and~\cite{Vlas68}, mean-field SDEs attracted much interest and sparked off new ground-breaking developments in both theory and applications in a variety of other research areas. For example, Lasry and Lions~\cite{LasLio07} employ mean-field techniques to study mean-field games in economic applications. There, the interaction of strategies of \quoteleft rational' players in an $N$-player differential game is described by a system of SDEs. Then, in the limiting case $N\rightarrow \infty $, the authors derive a system of partial differential equations that consists of a Hamilton-Jacobi and a Kolmogorov equation, which they use to examine the behaviour of agents in a vast network.

As for other works in this direction, but based on different methods, which rely on a probabilistic analysis, we refer to~\cite{CarDel13},~\cite{CarDel14},~\cite{CarDel18-1},~\cite{CarDel18-2} and~\cite{CarDelLac13}. Recent applications of mean-field methods include e.g.~the modelling of systemic risk in financial networks, see~\cite{CarFouMouSun18},~\cite{CarFouSun15},~\cite{FouIch13},~\cite{FouSun13},~\cite{GarPapYan13},~\cite{HamSoj19} and~\cite{KleKluRei15}. Further, extensions of~\eqref{eq:MFSDE} to
the setting of mean-field SDEs driven by L\'{e}vy processes or backward mean-field SDEs were studied in~\cite{BucDjeLiPen09},~\cite{BucLiPen09},~\cite{BucLiPenRai17} and~\cite{JouMelWoy08}. More recently, a robust solution theory of mean-field SDEs from the perspective of rough path theory was developed in~\cite{BaiCatDel20}.

Mean-field equations of type~\eqref{eq:MFSDE} were also examined for non-Lipschitz vector fields $b$ and $\sigma $. See e.g.~\cite{Jou95}, where the author invokes martingale techniques to construct unique weak solutions in the case of a bounded drift $b$, which is Lipschitz continuous in the law variable. We also refer to~\cite{Chi94},~\cite{MisVer20} in the case of weak solutions. As for path-dependent coefficients, see~\cite{LiMin16}. More recently, by using techniques based on Malliavin calculus, the existence of unique strong solutions to mean-field SDEs with additive Brownian noise and singular drift $b$ were obtained in~\cite{BauMeyPro18},~\cite{BauMey19-1} and~\cite{BauMey19-2} and a Bismut-Elworthy-Li formula for such equations was derived there. See also~\cite{BauMey19-3} for the case of a certain non-Markovian and \quoteleft rough' Gaussian driving noise in a Hilbert space setting.

Now let $t_{0}\geq 0$ and $\mathcal{P}$ be a separable metrisable space in $\mathcal{P}(\R^{m})$. 
In this article, we consider a \emph{McKean-Vlasov SDE} with \emph{random drift} and \emph{diffusion coefficients} of the form
\begin{equation}\label{eq:McKean-Vlasov}
dX_{t}=\B_{t}\big(X_{t},\mathcal{L}(X_{t})\big)\,dt +\Sigma _{t}(X_{t})\,dW_{t}\quad 
\text{for $t\geq t_{0}$},
\end{equation}
where $\B:[t_{0},\infty[\times \Omega \times \mathbb{R}^{m}\times \mathcal{P}\rightarrow\mathbb{R}^{m}$ and $\Sigma:[t_{0},\infty[\times \Omega \times \mathbb{R}^{m}\rightarrow\mathbb{R}^{m\times d}$ are measurable in an appropriate meaning. This includes \emph{controlled SDEs}, as Example~\ref{ex:controlled processes} shows. While the diffusion $\Sigma$ does not depend on the measure variable $\mu\in\mathcal{P}$, both $\B$ and $\Sigma$ are allowed to be non-Lipschitz, as stated more precisely below. 

The measure state space $\mathcal{P}$, whose topology may be finer than the topology of weak convergence, is required to be admissible in a suitable measurable sense, as introduced in Section~\ref{se:2}. For instance, $\mathcal{P}$ may stand for $\mathcal{P}(\mathbb{R}^{m})$, endowed with the Prokhorov metric, or the Polish space $\mathcal{P}_{1}(\mathbb{R}^{m})$ of all measures in $\mathcal{P}(\mathbb{R}^{m})$ with a finite first absolute moment, equipped with the Wasserstein metric.

In this general framework, the main objective of our work is to establish Lyapunov stability, uniqueness and existence of solutions to McKean-Vlasov SDEs of type~\eqref{eq:McKean-Vlasov}. Our methodology is based on a \emph{multidimensional Yamada-Watanabe approach} and allows for irregular drifts. More precisely, under an Osgood condition on compact sets on $\Sigma$, our paper offers the following \emph{novel contributions} compared to the existing literature:
\begin{enumerate}[(1)]
\item \emph{Pathwise uniqueness} for~\eqref{eq:McKean-Vlasov} follows from Corollary~\ref{co:pathwise uniqueness} under a partial Osgood condition on $\B$, and if the drift is independent of $\mu\in\mathcal{P}$, in which case~\eqref{eq:McKean-Vlasov} reduces to a SDE, then this condition is imposed on compact sets only.

\item \emph{(Asymptotic) moment stability} for~\eqref{eq:McKean-Vlasov} is inferred from Proposition~\ref{pr:moment stability estimate}, which yields a general \emph{$L^{1}$-comparison estimate}, and stated in Corollary~\ref{co:moment stability} under a partial mixed H\oe lder continuity condition on $\B$ and verifiable integrability conditions on the random partial H\oe lder coefficients with respect to $(x,\mu)\in\R^{m}\times\mathcal{P}$.

\item \emph{Exponential moment stability} is asserted by Corollary~\ref{co:exponential moment stability} if $\B$ satisfies a partial Lipschitz condition and the partial Lipschitz coefficients are bounded by a sum of power functions that determines the moment Lyapunov exponent explicitly.

\item \emph{Pathwise exponential stability} is implied by Corollary~\ref{co:special pathwise stability} if the preceding conditions on $\B$ hold, the random Lipschitz coefficient of $\B$ relative to $\mu\in\mathcal{P}$ is of suitable growth and the Osgood condition on $\Sigma$ is replaced by an $\frac{1}{2}$-H\oe lder condition. In particular, the pathwise Lyapunov exponent is half the moment Lyapunov exponent.

\item Thereby, we show that all these stability results can be obtained under verifiable assumptions, \emph{without resorting to the existence of Lyapunov functions}.

\item \emph{Existence of unique strong solutions} is established in Theorem~\ref{th:strong existence} when $\B$ and $\Sigma$ are deterministic, $\B$ satisfies a partial affine growth and a partial Lipschitz condition and $\B_{s}(\cdot,\cdot)$ is continuous and $\Sigma_{s}(0) = 0$ for all $s\geq t_{0}$. In particular, $\B$ and $\Sigma$ may fail to be of affine growth.
\end{enumerate}

Concisely, our methods for proving these main results rest on the pathwise uniqueness approach of Yamada and Watanabe~\cite{YamWat71}, which we extend to the multidimensional setting. In this context, let us consider two articles, which employ the Yamada-Watanabe ansatz to show pathwise uniqueness and stability of solutions:

In~\cite{AltSch16} the authors verify pathwise uniqueness of solutions to SDEs with deterministic coefficients. However, the Yamada-Watanabe condition given in the multidimensional case is rather restrictive and essentially reduces to a Lipschitz condition on the diffusion. The article~\cite{BahlMezMez20} pertains to the study of one-dimensional mean-field SDEs with bounded drift and diffusion coefficients by means of the Yamada-Watanabe approach. There, it is essentially assumed that the drift coefficient is Lipschitz continuous in the spatial and the law variable, while the diffusion satisfies a global Osgood condition.

In our paper, however, the conditions on the coefficients, even in the one-dimensional deterministic case, are weaker than in~\cite{BahlMezMez20}, since no growth conditions are imposed.
Further, we only require that $\B$ and $\Sigma$ satisfy a partial Osgood condition and an Osgood condition on compact sets, respectively. Finally, in the context of stability results for SDEs with irregular coefficients, we also mention the work~\cite{AryPil19}, where the authors prove moment exponential stability of solutions to SDEs driven by a drift vector field with discontinuities on a hyperplane.\smallskip

Our paper is organised as follows. In Section~\ref{se:2} we prove auxiliary results on measure state spaces and introduce the probabilistic setting. In Section~\ref{se:3} our main results are presented, whose proofs are given in Section~\ref{se:5}. Section~\ref{se:4} is devoted to a priori estimates and a pathwise asymptotic analysis for random It\^{o} processes. The results of this section are of independent interest and serve as basis for the derivation of the main results.

\section{Preliminaries}\label{se:2}

In the following, $|\cdot|$ is used as absolute value function, Euclidean norm or Hilbert-Schmidt norm, $|\cdot|_{1}$ stands for the $1$-norm and $A'$ is the transpose of a matrix $A\in\R^{m\times d}$. Further, for any interval $I$ with infimum $a$ and supremum $b$ and each monotone function $f:I\rightarrow\R$, we set $f(a):=\lim_{v\downarrow a}f(v)$, if $a\notin I$, and $f(b):= \lim_{v\uparrow b} f(v)$, if $b\notin I$.

\subsection{Admissible Polish spaces of Borel probability measures}

We consider a tractable class of spaces of Borel probability measures on $\R^{m}$, which serve as part of the domain of the random drift and diffusion coefficients of the McKean-Vlasov equation~\eqref{eq:McKean-Vlasov}.

\begin{Definition}
A metrisable space $\mathcal{P}$ in $\mathcal{P}(\R^{m})$ is called \emph{admissible} if for any metrisable space $S$, each probability space $(\tilde{\Omega},\tilde{\mathcal{F}},\tilde{P})$ and every process $X:S\times\tilde{\Omega}\rightarrow\R^{m}$ with continuous paths satisfying
\begin{equation*}
\mathcal{L}(X_{s})\in\mathcal{P}\quad\text{for all $s\in S$,}
\end{equation*}
the map $S\rightarrow\mathcal{P}$, $s\mapsto\mathcal{L}(X_{s})$ is Borel measurable.
\end{Definition}

To give sufficient conditions for the admissibility of a metrisable space $\mathcal{P}$ in $\mathcal{P}(\R^{m})$, we introduce \emph{stochastic convergence of probability measures}. Namely, a sequence $(\mu_{n})_{n\in\mathbb{N}}$ in $\mathcal{P}(\R^{m})$ converges stochastically to some $\mu\in\mathcal{P}(\R^{m})$ if there is a sequence $(\theta_{n})_{n\in\mathbb{N}}$ of Borel measures on $\R^{m}\times\R^{m}$ such that $\theta_{n}\in\mathcal{P}(\mu_{n},\mu)$ for each $n\in\N$ and
\begin{equation}\label{eq:stochastic convergence of measures}
\lim_{n\uparrow\infty}\theta_{n}\big(\{(x,y)\in\R^{m}\times\R^{m}\,|\, |x-y|\geq \delta\}\big) = 0\quad\text{for all $\delta > 0$.}
\end{equation}
Thereby, $\mathcal{P}(\mu,\nu)$ denotes the convex space of all Borel probability measures $\theta$ on $\R^{m}\times\R^{m}$ with first and second marginal distributions $\mu$ and $\nu$, respectively, for all $\mu,\nu\in\mathcal{P}(\R^{m})$.
 
\begin{Proposition}\label{pr:admissibility}
Let $(\varphi_{n})_{n\in\mathbb{N}}$ be a sequence of $\R^{m}$-valued bounded uniformly continuous maps on $\R^{m}$ such that $|\varphi_{k}|\leq |\varphi_{k+1}|$ and $\lim_{n\uparrow\infty} \varphi_{n}(x) = x$ for all $k\in\mathbb{N}$ and $x\in\mathbb{\R}^{m}$. Then $\mathcal{P}$ is admissible under the following two conditions:
\begin{enumerate}[(i)]
\item Each $\mu\in\mathcal{P}$ satisfies $\mu\circ\varphi_{n}^{-1}\in\mathcal{P}$ for all $n\in\mathbb{N}$ and $\lim_{n\uparrow\infty} \mu\circ\varphi_{n}^{-1} = \mu$ in $\mathcal{P}$.

\item For any sequence $(\mu_{k})_{k\in\mathbb{N}}$ in $\mathcal{P}$ that converges stochastically to some $\mu\in\mathcal{P}$ we have $\lim_{k\uparrow\infty}\mu_{k}\circ\varphi_{n}^{-1} =\mu\circ\varphi_{n}^{-1}$ in $\mathcal{P}$ for any $n\in\mathbb{N}$.
\end{enumerate}
\end{Proposition}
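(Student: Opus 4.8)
The plan is to deduce the Borel measurability of the map $\Phi:S\to\mathscr{P}$, $s\mapsto\tilde{P}_{X_{s}}$, by exhibiting it as a pointwise limit of \emph{continuous} maps built from the approximants $\varphi_{n}$. For each $n\in\N$ put $\Phi_{n}:S\to\mathscr{P}$, $\Phi_{n}(s):=\tilde{P}_{X_{s}}\circ\varphi_{n}^{-1}$; this is well defined with values in $\mathscr{P}$, since $\tilde{P}_{X_{s}}\in\mathscr{P}$ and the first part of condition~(i) applies. By the second part of condition~(i) we have $\Phi_{n}(s)\to\Phi(s)$ in $\mathscr{P}$ as $n\uparrow\infty$, for every fixed $s$. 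As $\mathscr{P}$ is metrisable, a pointwise limit of Borel measurable $\mathscr{P}$-valued maps is again Borel measurable: for open $U\subseteq\mathscr{P}$ and any compatible metric $\rho$ on $\mathscr{P}$ one checks $\Phi^{-1}(U)=\bigcup_{k\in\N}\bigcup_{N\in\N}\bigcap_{n\ge N}\Phi_{n}^{-1}(U^{(k)})$, where $U^{(k)}:=\{\mu\in\mathscr{P}:\rho(\mu,\mathscr{P}\setminus U)>1/k\}$ is open. Hence it suffices to show that each $\Phi_{n}$ is continuous.

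Fix $n\in\N$. Since $S$ is a metric space, continuity of $\Phi_{n}$ is equivalent to sequential continuity, so let $(s_{k})_{k\in\N}$ be a sequence in $S$ with $s_{k}\to s$. Because $X$ has continuous paths, $X_{s_{k}}(\omega)\to X_{s}(\omega)$ for every $\omega\in\tilde{\Omega}$, and therefore $\tilde{P}(|X_{s_{k}}-X_{s}|\ge\delta)\to 0$ for every $\delta>0$ by dominated convergence. Consider the couplings $\theta_{k}:=\tilde{P}\circ(X_{s_{k}},X_{s})^{-1}$, which satisfy $\theta_{k}\in\mathscr{P}(\tilde{P}_{X_{s_{k}}},\tilde{P}_{X_{s}})$ and $\theta_{k}(\{(x,y)\in\R^{m}\times\R^{m}:|x-y|\ge\delta\})=\tilde{P}(|X_{s_{k}}-X_{s}|\ge\delta)\to 0$ for every $\delta>0$. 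Thus $(\tilde{P}_{X_{s_{k}}})_{k\in\N}$ converges stochastically to $\tilde{P}_{X_{s}}$ in the sense of~\eqref{eq:stochastic convergence of measures}, and since all these laws lie in $\mathscr{P}$, condition~(ii) gives $\Phi_{n}(s_{k})=\tilde{P}_{X_{s_{k}}}\circ\varphi_{n}^{-1}\to\tilde{P}_{X_{s}}\circ\varphi_{n}^{-1}=\Phi_{n}(s)$ in $\mathscr{P}$. This proves continuity of $\Phi_{n}$, and combined with the reduction of the previous paragraph, the admissibility of $\mathscr{P}$ follows.

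The one genuinely delicate point I anticipate is the passage from \emph{pathwise} continuity of $X$ to convergence of the time-marginal laws $\tilde{P}_{X_{s_{k}}}$: since the topology of $\mathscr{P}$ may be strictly finer than weak convergence, one cannot argue directly, and this is precisely the gap that the regularising maps $\varphi_{n}$ together with~(i) and~(ii) are designed to close --- pathwise continuity does yield stochastic convergence of the marginals, which by~(ii) survives the push-forward by $\varphi_{n}$, and the push-forwards then recover the limit via~(i). The remaining ingredients (equivalence of continuity and sequential continuity on the metric space $S$, stability of Borel measurability under pointwise limits into a metrisable target, and the elementary coupling $\theta_{k}$) are routine, so I expect no other obstacle. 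Note that neither boundedness nor uniform continuity nor the monotonicity $|\varphi_{k}|\le|\varphi_{k+1}|$ of the $\varphi_{n}$ enters this argument; those hypotheses serve only to make conditions~(i) and~(ii) verifiable in concrete examples.
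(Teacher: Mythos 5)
Your proposal is correct and follows essentially the same route as the paper: the paper defines $F_{n}(s):=\tilde{P}_{\varphi_{n}\circ X_{s}}=\tilde{P}_{X_{s}}\circ\varphi_{n}^{-1}$, uses (i) for the pointwise convergence $F_{n}(s)\to\tilde{P}_{X_{s}}$, uses path continuity together with (ii) for the continuity of each $F_{n}$, and concludes Borel measurability of the pointwise limit. You merely make explicit two steps the paper leaves implicit, namely the coupling $\theta_{k}=\tilde{P}\circ(X_{s_{k}},X_{s})^{-1}$ witnessing stochastic convergence of the marginals and the standard fact that pointwise limits of Borel maps into a metrisable space are Borel.
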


\begin{Example}\label{ex:radial retraction}
We may let $\varphi_{n}$ be the radial retraction of the closed ball with center $0$ and radius $n$ for each $n\in\N$. That is, $\varphi_{n}(x) = x$, if $|x| \leq n$, and $\varphi_{n}(x) = \frac{n}{|x|}x$, if $|x| > n$, for any $x\in\R^{m}$. Indeed, $\varphi_{n}$ is Lipschitz continuous,
\begin{equation*}
|\varphi_{n}| \leq n\wedge |\varphi_{n+1}|\quad\text{and}\quad |\varphi_{n}(x) - x|= (|x|-n)^{+}.
\end{equation*}
\end{Example}

To give more concrete conditions, let $\rho:\R_{+}\rightarrow\R_{+}$ be increasing and $\mathcal{P}_{\rho}(\R^{m})$ denote the convex space of all $\mu\in\mathcal{P}(\R^{m})$ for which $\int_{\R^{m}}\!\rho(|x|)\,\mu(dx)$ is finite. 

\begin{Corollary}\label{co:admissibility}
Assume that $\mathcal{P}\subseteq\mathcal{P}_{\rho}(\R^{m})$, $\rho$ is continuous and $\rho(0) = 0$. Then the admissibility of $\mathcal{P}$ holds under the following two conditions:
\begin{enumerate}[(i)]
\item If $\varphi:\R^{m}\rightarrow\R^{m}$ is bounded and uniformly continuous and $|\varphi(x)|\leq |x|$ for all $x\in\R^{m}$, then $\mu\circ\varphi^{-1}\in\mathcal{P}$ for any $\mu\in\mathcal{P}$.
\item A sequence $(\mu_{n})_{n\in\N}$ in $\mathcal{P}$ converges to some $\mu\in\mathcal{P}$ if there is a sequence $(\theta_{n})_{n\in\N}$ of Borel measures on $\R^{m}\times\R^{m}$ such that $\theta_{n}\in\mathcal{P}(\mu_{n},\mu)$ for all $n\in\N$ and
\begin{equation*}
\lim_{n\uparrow\infty}\int_{\R^{m}\times\R^{m}}\!\rho(|x-y|)\,d\theta_{n}(x,y) = 0.
\end{equation*}
\end{enumerate}
\end{Corollary}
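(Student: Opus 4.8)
The strategy is to deduce Corollary~\ref{co:admissibility} directly from Proposition~\ref{pr:admissibility} by constructing a suitable approximating sequence $(\varphi_{n})_{n\in\N}$. The natural candidate is the sequence of radial retractions from Example~\ref{ex:radial retraction}: each $\varphi_{n}$ is Lipschitz (hence bounded and uniformly continuous), satisfies $|\varphi_{n}(x)|\leq |\varphi_{n+1}(x)|$ and $|\varphi_{n}(x) - x| = (|x|-n)^{+}\to 0$ as $n\uparrow\infty$ for every $x\in\R^{m}$, and crucially $|\varphi_{n}(x)|\leq |x|$. So the structural hypotheses of Proposition~\ref{pr:admissibility} on the maps $\varphi_{n}$ are immediate, and it remains to verify conditions (i) and (ii) of that proposition from conditions (i) and (ii) of the corollary, using $\mathscr{P}\subset\mathscr{P}_{\rho}(\R^{m})$ and the continuity of $\rho$.

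For condition (i) of Proposition~\ref{pr:admissibility}: fix $\mu\in\mathscr{P}$. Since $\varphi_{n}$ is bounded, uniformly continuous and $|\varphi_{n}(x)|\leq |x|$, condition (i) of the corollary gives $\mu\circ\varphi_{n}^{-1}\in\mathscr{P}$ for every $n$. To show $\mu\circ\varphi_{n}^{-1}\to\mu$ in $\mathscr{P}$, I would apply condition (ii) of the corollary with $\mu_{n} := \mu\circ\varphi_{n}^{-1}$: one needs a coupling $\theta_{n}\in\mathscr{P}(\mu_{n},\mu)$ along which $\int\rho(|x-y|)\,\theta_{n}(dx,dy)\to 0$. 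The obvious choice is the pushforward of $\mu$ under $x\mapsto(\varphi_{n}(x),x)$, whose marginals are indeed $\mu\circ\varphi_{n}^{-1}$ and $\mu$; then $\int\rho(|x-y|)\,\theta_{n}(dx,dy) = \int_{\R^{m}}\rho(|\varphi_{n}(x)-x|)\,\mu(dx) = \int_{\{|x|>n\}}\rho((|x|-n)^{+})\,\mu(dx)$. Because $\rho$ is increasing, the integrand is dominated by $\rho(|x|)$, which is $\mu$-integrable since $\mu\in\mathscr{P}_{\rho}(\R^{m})$; and it converges pointwise to $\rho(0)$ on a set whose $\mu$-measure tends to $0$, so by dominated convergence (after subtracting the constant $\rho(0)$, or noting $\rho(0)$ contributes only on the shrinking set $\{|x|>n\}$) the integral tends to $0$. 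Hence $\mu_{n}\to\mu$ in $\mathscr{P}$.

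For condition (ii) of Proposition~\ref{pr:admissibility}: let $(\mu_{k})_{k\in\N}$ converge stochastically to $\mu\in\mathscr{P}$, witnessed by couplings $\theta_{k}\in\mathscr{P}(\mu_{k},\mu)$ with $\theta_{k}(\{|x-y|\geq\delta\})\to 0$ for each $\delta>0$. Fix $n$; I want $\mu_{k}\circ\varphi_{n}^{-1}\to\mu\circ\varphi_{n}^{-1}$ in $\mathscr{P}$. Again invoke condition (ii) of the corollary: push $\theta_{k}$ forward under $(x,y)\mapsto(\varphi_{n}(x),\varphi_{n}(y))$ to get a coupling $\tilde\theta_{k}\in\mathscr{P}(\mu_{k}\circ\varphi_{n}^{-1},\mu\circ\varphi_{n}^{-1})$, and estimate $\int\rho(|\varphi_{n}(x)-\varphi_{n}(y)|)\,\theta_{k}(dx,dy)$. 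Since $\varphi_{n}$ is Lipschitz with some constant $L_{n}$ and maps into the ball of radius $n$, $|\varphi_{n}(x)-\varphi_{n}(y)| \leq (L_{n}|x-y|)\wedge 2n$, so $\rho(|\varphi_{n}(x)-\varphi_{n}(y)|) \leq \rho(2n)$ everywhere and $\leq \rho(L_{n}\delta)$ on $\{|x-y|<\delta\}$; splitting the integral over $\{|x-y|<\delta\}$ and its complement gives a bound $\rho(L_{n}\delta) + \rho(2n)\,\theta_{k}(\{|x-y|\geq\delta\})$. Letting $k\uparrow\infty$ and then $\delta\downarrow 0$, using continuity of $\rho$ at $0$ (so $\rho(L_{n}\delta)\to\rho(0)$), shows $\limsup_{k}\int\rho(|\varphi_{n}(x)-\varphi_{n}(y)|)\,\theta_{k} \leq \rho(0)$; a small additional argument (e.g. replacing $\rho$ by $\rho-\rho(0)$, or observing the $\rho(0)$ term is the contribution of the diagonal and can be absorbed) yields convergence to $0$, hence the desired convergence in $\mathscr{P}$ by the corollary's hypothesis (ii).

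The main technical obstacle is the bookkeeping around the constant $\rho(0)$: the hypothesis (ii) of the corollary asks for couplings along which $\int\rho(|x-y|)\,\theta_{n}\to 0$, but a pushforward coupling concentrated near (but not on) the diagonal only forces this integral down to $\rho(0)$, not to $0$, unless $\rho(0)=0$. I expect the clean fix is to note that stochastic convergence is unchanged if one replaces $\rho$ by $\rho-\rho(0)$ (still increasing, continuous, vanishing at $0$, and $\mathscr{P}_{\rho}=\mathscr{P}_{\rho-\rho(0)}$), so that without loss of generality $\rho(0)=0$ from the outset; all the dominated-convergence and splitting arguments above then go through verbatim and deliver genuine convergence to $0$. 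With that normalisation in place, both conditions of Proposition~\ref{pr:admissibility} hold and admissibility of $\mathscr{P}$ follows.
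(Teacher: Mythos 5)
Your proof is correct and follows essentially the same route as the paper: verify Proposition~\ref{pr:admissibility} for the radial retractions of Example~\ref{ex:radial retraction}, using the coupling $\mu\circ(\varphi_{n},\mathrm{id})^{-1}$ together with dominated convergence for condition (i), and the pushforward of $\theta_{k}$ under $(x,y)\mapsto(\varphi_{n}(x),\varphi_{n}(y))$ together with a splitting over $\{|x-y|\geq\delta\}$ for condition (ii). Your attention to the constant $\rho(0)$ is warranted — the paper's own proof silently assumes $\rho(0)=0$ when it claims the integral of $\rho(|\varphi_{n}(x)-x|)$ tends to zero by dominated convergence and when it picks $\delta$ with $\rho(2\delta)<\varepsilon/2$ — and your normalisation $\rho\mapsto\rho-\rho(0)$ is the natural repair, especially since for $\rho(0)>0$ the premise of the corollary's hypothesis (ii) could never be satisfied anyway.
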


\begin{Remark}
By the measure transformation formula, any measurable map $\varphi:\R^{m}\rightarrow\R^{m}$ satisfies $\int_{\R^{m}}\!\rho(|x|)\,(\mu\circ\varphi^{-1})(dx) = \int_{\R^{m}}\!\rho(|\varphi(x)|)\,\mu(dx)$. Thus, the first condition of the corollary holds for $\mathcal{P}_{\rho}(\R^{m})$.
\end{Remark}

\begin{Examples}\label{ex:admissible spaces of measures} 
(i) The \emph{Prokhorov metric} $\vartheta_{P}$ turns $\mathcal{P}(\R^{m})$ into a Polish space and can be represented as follows: For $\varepsilon > 0$ let $N_{\varepsilon}(B)$ be the $\varepsilon$-neighbourhood of a set $B$ in $\R^{m}$. Then the $[0,1]$-valued functional on $\mathcal{P}(\R^{m})\times\mathcal{P}(\R^{m})$ given by
\begin{equation*}
\vartheta_{0}(\mu,\nu):=\inf\{\varepsilon > 0\,|\,\forall B\in\mathcal{B}(\R^{m}): \mu(B)\leq \nu(N_{\varepsilon}(B)) + \varepsilon\}
\end{equation*}
satisfies the triangle inequality and $\vartheta_{P}(\mu,\nu) = \vartheta_{0}(\mu,\nu)\vee \vartheta_{0}(\nu,\mu)$ for all $\mu,\nu\in\mathcal{P}(\R^{m})$. As convergence with respect to $\vartheta_{P}$ is equivalent to weak convergence, all requirements of Proposition~\ref{pr:admissibility} are met by $\mathcal{P}(\R^{m})$ if endowed with $\vartheta_{P}$.\smallskip

(ii) For $p\geq 1$ consider the Polish space $\mathcal{P}_{p}(\R^{m})$ of all $\mu\in\mathcal{P}(\R^{m})$ with finite $p$-th absolute moment $\int_{\R^{m}}\!|x|^{p}\,\mu(dx)$, equipped with the \emph{$p$-th Wasserstein metric} given by
\begin{equation}\label{eq:Wasserstein metric}
\vartheta_{p}(\mu,\nu) := \inf_{\theta\in\mathcal{P}(\mu,\nu)}\bigg(\int_{\R^{m}\times\R^{m}}\!|x - y|^{p}\,d\theta(x,y)\bigg)^{\frac{1}{p}}.
\end{equation}
As the definition of $\vartheta_{p}$ entails that the second condition of Corollary~\ref{co:admissibility} is valid for the choice $\rho(v) = v^{p}$ for all $v\geq 0$, we see that $\mathcal{P}_{p}(\R^{m})$ is admissible.
\end{Examples}

\subsection{Notions of solutions, pathwise uniqueness and stability}

In what follows, let $\mathcal{P}$ be an admissible separable metrisable space in $\mathcal{P}(\R^{m})$ and $\mathcal{A}$ denote the progressive $\sigma$-field on $[t_{0},\infty[\times\Omega$. Thus, a set $A$ in $[t_{0},\infty[\times\Omega$ lies in $\mathcal{A}$ if and only if $\mathbbm{1}_{A}$ is progressively measurable. We shall call a map
\begin{equation*}
F:[t_{0},\infty[\times\Omega\times\R^{m}\times\mathcal{P}\rightarrow\R^{m\times d},\quad (s,\omega,x,\mu)\mapsto F_{s}(x,\mu)(\omega)
\end{equation*}
\emph{admissible} if it is measurable relative to the product $\sigma$-field $\mathcal{A}\otimes\mathcal{B}(\R^{m})\otimes\mathcal{B}(\mathcal{P})$. In this case, for any $\R^{m}$-valued progressively measurable process $X$ and each Borel measurable map $\mu:[t_{0},\infty[\rightarrow\mathcal{P}$, the process
\begin{equation*}
[t_{0},\infty[\times\Omega\rightarrow \R^{m\times d}, \quad(s,\omega)\mapsto F_{s}(X_{s}(\omega),\mu(s))(\omega)
\end{equation*}
is progressively measurable. In particular, if $X$ is continuous and $\mathcal{L}(X_{t})\in\mathcal{P}$ for all $t\geq t_{0}$, then the \emph{law map} $[t_{0},\infty[\rightarrow\mathcal{P}$, $t\mapsto\mathcal{L}(X_{t})$ is a feasible choice for $\mu$. In this sense, we require the drift $\B$ and the diffusion $\Sigma$ of the McKean-Vlasov equation~\eqref{eq:McKean-Vlasov} to be admissible.

\begin{Definition}
A \emph{solution} to~\eqref{eq:McKean-Vlasov} is an $\R^{m}$-valued adapted continuous process $X$ such that $\mathcal{L}(X_{s})\in\mathcal{P}$ for all $s\geq t_{0}$, $
\int_{t_{0}}^{\cdot}\!|\B_{s}(X_{s},\mathcal{L}(X_{s}))| + |\Sigma_{s}(X_{s})|^{2}\,ds < \infty$ and
\begin{equation*}
X = X_{t_{0}} + \int_{t_{0}}^{\cdot}\!\B_{s}\big(X_{s},\mathcal{L}(X_{s})\big)\,ds + \int_{t_{0}}^{\cdot}\!\Sigma_{s}(X_{s})\,dW_{s}\quad\text{a.s.}
\end{equation*}
\end{Definition}

\begin{Example}\label{ex:controlled processes}
For $l\in\N$ let $Y$ be an $\R^{l}$-valued progressively measurable process and $b:[t_{0},\infty[\times\R^{m}\times\mathcal{P}\times\R^{l}\rightarrow\R^{m}$ and $\sigma:[t_{0},\infty[\times\R^{m}\times\R^{l}\rightarrow\R^{m\times d}$ be Borel measurable such that
\begin{equation*}
\B_{s}(x,\mu) = b(s,x,\mu,Y_{s})\quad\text{and}\quad\Sigma_{s}(x) = \sigma(s,x,Y_{s})
\end{equation*}
for any $(s,x,\mu)\in [t_{0},\infty[\times\R^{m}\times\mathcal{P}$. Then $\B$ and $\Sigma$ are indeed admissible and~\eqref{eq:McKean-Vlasov} turns into a McKean-Vlasov SDE whose drift and diffusion coefficients are \emph{controlled} by $Y$.
\end{Example}

We observe that $\B$ and $\Sigma$ are independent of $\omega\in\Omega$ if and only if there are two Borel measurable maps $b:[t_{0},\infty[\times\R^{m}\times\mathcal{P}\rightarrow\R^{m}$ and $\sigma:[t_{0},\infty[\times\R^{m}\rightarrow\R^{m\times d}$ such that
\begin{equation}\label{eq:deterministic coefficients}
\B_{s}(x,\mu) = b(s,x,\mu)\quad\text{and}\quad\Sigma_{s}(x) = \sigma(s,x)
\end{equation}
for all $(s,x,\mu)\in [t_{0},\infty[\times\R^{m}\times\mathcal{P}$. For the deterministic coefficients $b$ and $\sigma$, we may also consider \emph{solutions in the strong and weak sense} and write~\eqref{eq:McKean-Vlasov} formally as
\begin{equation}\label{eq:deterministic McKean-Vlasov}
dX_{t} = b\big(t,X_{t},\mathcal{L}(X_{t})\big)\,dt + \sigma(t,X_{t})\,dW_{t}\quad\text{for $t\geq t_{0}$.}
\end{equation}
Namely, for a fixed $\R^{m}$-valued $\mathcal{F}_{t_{0}}$-measurable random vector $\xi$ we let $(\mathcal{E}_{t}^{\xi})_{t\geq t_{0}}$ denote the natural filtration of the process $[t_{0},\infty[\times\Omega\rightarrow\R^{m}\times\R^{d}$, $(t,\omega)\mapsto (\xi,W_{t} - W_{t_{0}})(\omega)$. That means,
\begin{equation*}
\mathcal{E}_{t}^{\xi} =\sigma(\xi)\vee\sigma(W_{s} - W_{t_{0}}: s\in [t_{0},t])\quad\text{for all $t\geq t_{0}$.}
\end{equation*}
Then a solution $X$ to~\eqref{eq:deterministic McKean-Vlasov} satisfying $X_{t_{0}} = \xi$ a.s.~is called \emph{strong} if it is adapted to the right-continuous augmented filtration of $(\mathcal{E}_{t}^{\xi})_{t\geq t_{0}}$. A \emph{weak solution} to~\eqref{eq:deterministic McKean-Vlasov} is a solution $X$ defined on some filtered probability space
\begin{equation*}
(\tilde{\Omega},\tilde{\mathcal{F}},(\tilde{\mathcal{F}}_{t})_{t\geq 0},\tilde{P})
\end{equation*}
that satisfies the usual conditions and on which there exists a standard $d$-dimensional $(\tilde{\mathcal{F}}_{t})_{t\geq 0}$-Brownian motion $\tilde{W}$. That is, $X$ is an $\R^{m}$-valued $(\tilde{\mathcal{F}}_{t})_{t\geq t_{0}}$-adapted continuous process satisfying $\mathcal{L}(X_{s})\in\mathcal{P}$ for all $s\geq t_{0}$,
\begin{equation*}
\int_{t_{0}}^{\cdot}\!\big|b(s,X_{s},\mathcal{L}(X_{s})\big)\big| + |\sigma(s,X_{s})|^{2}\,ds < \infty
\end{equation*}
and $X = X_{t_{0}} + \int_{t_{0}}^{\cdot}\!b\big(s,X_{s},\mathcal{L}(X_{s})\big)\,ds + \int_{t_{0}}^{\cdot}\!\sigma(s,X_{s})\,d\tilde{W}_{s}$ a.s. In this case, we will say that $X$ solves~\eqref{eq:deterministic McKean-Vlasov} weakly relative to $\tilde{W}$. Let us now return to stochastic coefficients.

The regularity of the drift $\B$ relative to the variable $\mu\in\mathcal{P}$ will be stated in terms of an $\R_{+}$-valued Borel measurable functional $\vartheta$ on $\mathcal{P}\times\mathcal{P}$ satisfying
\begin{equation}\label{eq:specific domination condition}
\vartheta\big(\mathcal{L}(X),\mathcal{L}(\tilde{X})\big) \leq E\big[|X - \tilde{X}|\big]
\end{equation}
for any two $\R^{m}$-valued random vectors $X,\tilde{X}$ with $\mathcal{L}(X),\mathcal{L}(\tilde{X})\in\mathcal{P}$. For instance, this estimate holds if $\vartheta$ is \emph{dominated} by the Wasserstein metric $\vartheta_{1}$, introduced in Examples~\ref{ex:admissible spaces of measures}, in the sense that
\begin{equation}\label{eq:domination condition}
\vartheta(\mu,\nu) \leq \vartheta_{1}(\mu,\nu)\quad\text{for all $\mu,\nu\in\mathcal{P}$,}
\end{equation}
where the definition of $\vartheta_{1}$ in~\eqref{eq:Wasserstein metric} is extended for any $\mu,\nu\in\mathcal{P}(\R^{m})$ by allowing infinite values. If in fact $\mathcal{P}\subseteq\mathcal{P}_{1}(\R^{m})$, then this extension is not used. In this case, $\mathcal{P}$ is admissible as soon as $\vartheta$ is a metric inducing its topology and condition~(i) of Corollary~\ref{co:admissibility} holds.

\begin{Example}
Let $\phi:[-\infty,\infty[\rightarrow\R_{+}$ and $\varphi:\R^{m}\times\R^{m}\rightarrow\R$ be measurable, $\rho\in C(\R_{+})$ be increasing and vanish at $0$ and $c > 0$ be such that
\begin{equation*}
|\varphi(x,y)|\leq \rho(|x-y|)\quad\text{and}\quad \rho(v+w)/c \leq \rho(v) + \rho(w)
\end{equation*}
for all $x,y\in\R^{m}$ and $v,w\geq 0$. For instance, we may take $\rho(v) = \lambda v^{p}$ for each $v\geq 0$ and some $\lambda,p > 0$. Suppose that $\mathcal{P}\subseteq\mathcal{P}_{\rho}(\R^{m})$ and
\begin{equation*}
\vartheta(\mu,\nu) = \phi\bigg(\inf_{\theta\in\mathcal{P}(\mu,\nu)}\int_{\R^{m}\times\R^{m}}\!\varphi(x,y)\,d\theta(x,y)\bigg)\quad\text{for any $\mu,\nu\in\mathcal{P}$.}
\end{equation*}
Then $\vartheta$ is well-defined and the following three assertions are readily checked:
\begin{enumerate}[(1)]
\item If $\varphi(x,y) = \psi(x) - \psi(y)$ for all $x,y\in\R^{m}$ and some uniformly continuous function $\psi:\R^{m}\rightarrow\R$ that admits $\rho$ as modulus of continuity, then $\vartheta$ is of the form
\begin{equation*}
\vartheta(\mu,\nu) = \phi\bigg(\int_{\R^{m}}\!\psi(x)\,\mu(dx) - \int_{\R^{m}}\!\psi(y)\,\nu(dy)\bigg)\quad\text{for any $\mu,\nu\in\mathcal{P}$}.
\end{equation*}

\item In the case $\phi(v) = v$ for all $v\geq 0$, $\varphi(x,y) = |x-y|$ for any $x,y\in\R^{m}$ and $\rho(v) = v$ for each $v\geq 0$, we obtain that $\vartheta(\mu,\nu) = \vartheta_{1}(\mu,\nu)$ for any $\mu,\nu\in\mathcal{P}$.

\item Assume that $\phi(v) \leq v^{+}$ for all $v\in [-\infty,\infty[$ and $\rho(v) = v$ for each $v\geq 0$. Then the general domination estimate~\eqref{eq:domination condition} holds.
\end{enumerate}
\end{Example}

Let for the moment $\mathcal{P}=\mathcal{P}_{1}(\R^{m})$, $\vartheta = \vartheta_{1}$ and the following affine growth condition be valid: $|\B(\cdot,\mu)| \leq c_{0} +  c_{1}\vartheta_{1}(\mu,\delta_{0})$ for all $\mu\in\mathcal{P}_{1}(\R^{m})$ and some $c_{0},c_{1} \geq 0$, where $\delta_{0}$ is the Dirac measure in $0$. Then any $\R^{m}$-valued continuous process $X$ satisfies
\begin{equation*}
\int_{t_{0}}^{t}\!\big|\B_{s}\big(X_{s},\mathcal{L}(X_{s})\big)\big|\,ds \leq  \int_{t_{0}}^{t}\!c_{0} +  c_{1}E\big[|X_{s}|\big]\,ds\quad\text{for all $t\geq t_{0}$,}
\end{equation*}
by using that $\vartheta_{1}(\mu,\delta_{0}) = \int_{\R^{m}}|x|\,\mu(dx)$ for any $\mu\in\mathcal{P}_{1}(\R^{m})$. While the left-hand integral is finite if $X$ solves~\eqref{eq:McKean-Vlasov}, the right-hand integral may be infinite. Indeed, the \emph{absolute moment function}
\begin{equation*}
[t_{0},\infty[\rightarrow\R_{+},\quad s\mapsto E[|X_{s}|]
\end{equation*}
of $X$ is lower semicontinuous and hence, measurable, by Fatou's lemma, but it is not necessarily locally integrable. If $c_{1} = 0$, then $E[|X|]$ would even be locally bounded for any solution $X$ to~\eqref{eq:McKean-Vlasov}, by Lemmas~\ref{le:abstract growth estimate} and~\ref{le:moment growth estimate}.

However, to allow for \emph{growth in the measure variable}, we study uniqueness, stability and existence under a local integrability condition, which leads to more generality. To this end, let $\Theta$ be an $[0,\infty]$-valued functional on $[t_{0},\infty[\times\mathcal{P}\times\mathcal{P}\times\mathcal{P}(\R^{m})$.

\begin{Definition}\label{de:pathwise uniqueness}
We say that \emph{pathwise uniqueness} holds for~\eqref{eq:McKean-Vlasov} (relative to $\Theta$) if any two solutions $X$ and $\tilde{X}$ with $X_{t_{0}} = \tilde{X}_{t_{0}}$ a.s.~(and for which the function
\begin{equation}\label{eq:integrable function}
[t_{0},\infty[\rightarrow [0,\infty],\quad s\mapsto\Theta\big(s,\mathcal{L}(X_{s}),\mathcal{L}(\tilde{X}_{s}),\mathcal{L}(X_{s}-\tilde{X}_{s})\big)
\end{equation}
is measurable and locally integrable) are indistinguishable.
\end{Definition}

\begin{Example}
Suppose that $\rho,\varrho\in C(\R_{+})$ are positive on $]0,\infty[$ and vanish at $0$, $\rho$ is concave and $\eta,\lambda:[t_{0},\infty[\rightarrow\R_{+}$ are measurable and locally integrable such that
\begin{equation*}
\Theta(s,\mu,\tilde{\mu},\nu) = \lambda(s) \varrho\big(\vartheta(\mu,\tilde{\mu})\big) + \eta(s)\int_{\R^{m}}\!\rho(|x|)\,\nu(dx)
\end{equation*}
for all $s\geq t_{0}$, $\mu,\tilde{\mu}\in\mathcal{P}$ and $\nu\in\mathcal{P}(\R^{m})$. Then $\Theta(s,\mu,\tilde{\mu},\nu)$ is finite if $\nu\in\mathcal{P}_{1}(\R^{m})$, by Jensen's inequality, and for any two continuous processes $X$ and $\tilde{X}$ with $\mathcal{L}(X_{s}),\mathcal{L}(\tilde{X}_{s})\in\mathcal{P}$ we have
\begin{equation*}
\Theta\big(s,\mathcal{L}(X_{s}),\mathcal{L}(\tilde{X}_{s}),\mathcal{L}(X_{s}-\tilde{X}_{s})\big) = \eta(s) \varrho\big(\vartheta(\mathcal{L}(X_{s}),\mathcal{L}(\tilde{X}_{s}))\big) + \lambda(s) E\big[\rho(|X-\tilde{X}|)\big].
\end{equation*}
Thus, the measurable function~\eqref{eq:integrable function} is locally integrable as soon as $E[|X-\tilde{X}|]$ is locally bounded.
\end{Example}

Now we present \emph{generalised notions of stability} for~\eqref{eq:McKean-Vlasov} in a \emph{global sense}, which apply directly without shifting the stochastic drift and diffusion. Namely, in the literature for stability of SDEs, it is a convenient assumption that drift and diffusion vanish at all times at the origin of $\R^{m}$, ensuring that the constant zero process is a solution.

If a reader seeks to use stability results for McKean-Vlasov SDEs and the required normalisations $\B(0,\delta_{0}) =0$ and $\Sigma(0) = 0$ fail, then there should exist at least one solution $\hat{X}$ to~\eqref{eq:McKean-Vlasov}. In this case, the maps $\hat{\B}$ and $\hat{\Sigma}$ on $[t_{0},\infty[\times\Omega\times\R^{m}\times\mathcal{P}$ and $[t_{0},\infty[\times\Omega\times\R^{m}$ with values in $\R^{m}$ and $\R^{m\times d}$, respectively, given by
\begin{equation*}
\hat{\B}_{t}(x,\mu) := \B_{t}(\hat{X}_{t} + x, \hat{l}(t,\mu)) - \B_{t}\big(\hat{X}_{t},\mathcal{L}(\hat{X}_{t})\big)\quad\text{and}\quad \hat{\Sigma}_{t}(x) := \Sigma_{t}(\hat{X}_{t} + x) - \Sigma_{t}(\hat{X}_{t})
\end{equation*}
are admissible and satisfy $\hat{\B}(0,\delta_{0}) = 0$ and $\hat{\Sigma}(0)=0$ for any fixed Borel measurable map $\hat{l}:[t_{0},\infty[\times\mathcal{P}\rightarrow\mathcal{P}$ such that $\hat{l}(t,\delta_{0}) = \mathcal{L}(\hat{X}_{t})$ for all $t\geq t_{0}$. In effect, the reader is forced to replace the drift $\B$ and the diffusion $\Sigma$ by $\hat{\B}$ and $\hat{\Sigma}$, respectively, and use stability concepts that are stated in terms of the particular solution $\hat{X}$.

Further, even if $\B$ and $\Sigma$ were deterministic as in~\eqref{eq:deterministic coefficients}, the coefficients $\hat{\B}$ and $\hat{\Sigma}$ would in general become random, unless $\hat{X}_{t}$ is constant for each $t\geq t_{0}$. This translation procedure may certainly have its justification for a local stability analysis, but, as it is not necessary for \emph{global comparisons of solutions}, we do not apply it.

\begin{Definition}\label{de:stability}
Let $\alpha > 0$.
\begin{enumerate}[(i)]
\item We say that~\eqref{eq:McKean-Vlasov} is \emph{stable in moment} (with respect to $\Theta$) if any two solutions $X$ and $\tilde{X}$ (for which the function~\eqref{eq:integrable function} is measurable and locally integrable) satisfy
\begin{equation*}
\sup_{t\geq t_{0}} E\big[|X_{t} - \tilde{X}_{t}|\big] < \infty
\end{equation*}
under the condition that $E[|X_{t_{0}} - \tilde{X}_{t_{0}}|] < \infty$. If in addition $\lim_{t\uparrow\infty} E[|X_{t} - \tilde{X}_{t}|] = 0$, then we speak about \emph{asymptotic stability in moment}.

\item Equation~\eqref{eq:McKean-Vlasov} is said to be \emph{$\alpha$-exponentially stable in moment} (relative to $\Theta$) if there are $\lambda < 0$ and $c\geq 0$ such that for any two solutions $X$ and $\tilde{X}$ to~\eqref{eq:McKean-Vlasov},
\begin{equation}\label{eq:exponential moment stability}
E\big[|X_{t} - \tilde{X}_{t}|\big] \leq c e^{\lambda (t-t_{0})^{\alpha}} E\big[|X_{t_{0}} - \tilde{X}_{t_{0}}|\big]\quad\text{for all $t\geq t_{0}$}
\end{equation}
(whenever~\eqref{eq:integrable function} is measurable and locally integrable). In this case, $\lambda$ is said to be a \emph{moment $\alpha$-Lyapunov exponent} for~\eqref{eq:McKean-Vlasov}.

\item We call~\eqref{eq:McKean-Vlasov} \emph{pathwise $\alpha$-exponentially stable} (relative to an initial absolute moment and $\Theta$) if there is $\lambda < 0$ such that for any two solutions $X$ and $\tilde{X}$ we have
\begin{equation*}
\limsup_{t\uparrow\infty} \frac{1}{t^{\alpha}} \log\big(|X_{t} - \tilde{X}_{t}|\big) \leq \lambda\quad \text{a.s.}
\end{equation*}
(as soon as $E[|X_{t_{0}} - \tilde{X}_{t_{0}}|] < \infty$ and~\eqref{eq:integrable function} is measurable and locally integrable). If this is the case, then $\lambda$ is called a \emph{pathwise $\alpha$-Lyapunov exponent} for~\eqref{eq:McKean-Vlasov}.
\end{enumerate}
\end{Definition}

\begin{Remark}\label{re:Lyapunov exponent}
If $\psi,u:[t_{0},\infty[\rightarrow\R_{+}$ are locally bounded, $\psi > 0$ on $]t_{0},\infty[$ and $\lambda\in\R$ satisfies $\limsup_{t\uparrow\infty} \frac{1}{\psi(t)}\log(u(t))\leq \lambda$, then for any $\varepsilon > 0$ there is $c_{\varepsilon} > 0$ such that
\begin{equation*}
u(t) \leq c_{\varepsilon} e^{\psi(t)(\lambda + \varepsilon)}\quad\text{for all $t\geq t_{0}$.}
\end{equation*}
Thus, while the exponential estimate~\eqref{eq:exponential moment stability} for two given solutions $X$ and $\tilde{X}$ to~\eqref{eq:McKean-Vlasov} satisfying $E[|X_{t_{0}} - \tilde{X}_{t_{0}}|] < \infty$ readily implies
\begin{equation*}
\limsup_{t\uparrow\infty}\frac{1}{t^{\alpha}}\log\big(E\big[|X_{t} - \tilde{X}_{t}|\big]\big) \leq \lambda,
\end{equation*}
the latter bound entails the former only when $\lambda$ is replaced by $\lambda +\varepsilon$ for any $\varepsilon > 0$, provided $E[|X-\tilde{X}|]$ is locally bounded.
\end{Remark}

Note that for our general equation~\eqref{eq:McKean-Vlasov} with random coefficients $\B$ and $\Sigma$, we have to formulate Definitions~\ref{de:pathwise uniqueness} and~\ref{de:stability} for uniqueness and stability with respect to the underlying probability space. These concepts carry over to the case~\eqref{eq:deterministic coefficients} of deterministic coefficients, applied to each filtered probability space satisfying the usual conditions and on which there is a standard $d$-dimensional Brownian motion. 

To be precise,~pathwise uniqueness holds for~\eqref{eq:deterministic McKean-Vlasov} in the usual sense (relative to $\Theta$) if any two weak solutions $X$ and $\tilde{X}$ on a common filtered probability space relative to one standard $d$-dimensional Brownian motion with $X_{t_{0}} = \tilde{X}_{t_{0}}$ a.s.~(and for which~\eqref{eq:integrable function} is measurable and locally integrable) are indistinguishable. Similarly, by considering weak solutions on common filtered probability spaces instead of solutions on the underlying space, \emph{each notion of stability applies to~\eqref{eq:deterministic McKean-Vlasov}}.

\section{Main results}\label{se:3}

\subsection{A quantitative first moment estimate and pathwise uniqueness}

We seek to compare solutions to~\eqref{eq:McKean-Vlasov} with varying drifts and thereby show pathwise uniqueness. For this purpose, let the map
\begin{equation*}
\tilde{\B}:[t_{0},\infty[\times\Omega\times\R^{m}\times\mathcal{P}\rightarrow\R^{m}
\end{equation*}
be admissible. For $p\geq 1$ we define two sublinear functionals $[\cdot]_{p}$ and $[\cdot]_{\infty}$ with respective values in $[0,\infty]$ and $]-\infty,\infty]$ on the linear space of all random variables by
\begin{equation*}
[X]_{p} :=E\big[(X^{+})^{p}\big]^{\frac{1}{p}}\quad\text{and}\quad [X]_{\infty}:=\esssup X.
\end{equation*}
Note that if $\alpha,\beta\in [0,1]$ satisfy $\alpha + \beta\leq 1$ and $X$ and $Y$ are two random variables such that $Y\geq 0$, $[X]_{\frac{1}{1-\alpha}} < \infty$ and $E[Y] < \infty$, then the inequalities of H\oe lder and Young entail that $XY^{\alpha}$ is quasi-integrable and
\begin{equation}\label{eq:essential inequality}
E[XY^{\alpha}]E[Y]^{\beta} \leq [X]_{\frac{1}{1-\alpha}}\big(1 - (\alpha + \beta) + (\alpha + \beta)E[Y]\big).
\end{equation}
This bound leads to the quantitative $L^{1}$-estimates of Theorems~\ref{th:abstract moment estimate} and~\ref{th:moment stability estimate}, on which our main results are based. As first requirement, we introduce an \emph{Osgood continuity condition on compact sets} for the rows of $\Sigma$.
\begin{enumerate}[label=(C.\arabic*), ref=C.\arabic*, leftmargin=\widthof{(C.1)} + \labelsep]
\item\label{co:1} For any $n\in\N$ there are an increasing $\hat{\rho}_{n}\in C(\R_{+})$ and an $\R_{+}$-valued progressively measurable process $\hat{\eta}^{(n)}$ with locally square-integrable paths such that
\begin{equation*}
\text{$\hat{\rho}_{n} > 0$ on $]0,\infty[$,}\quad \int_{0}^{1}\!\frac{1}{\hat{\rho}_{n}(v)^{2}}\,dv = \infty\quad\text{and}\quad\big|e_{i}'\big(\Sigma(x) - \Sigma(\tilde{x})\big)\big| \leq \hat{\eta}^{(n)}\hat{\rho}_{n}(|x_{i} -\tilde{x}_{i}|)
\end{equation*}
for all $x,\tilde{x}\in\R^{m}$ with $|x|\vee|\tilde{x}|\leq n$ a.s.~for any $i\in\{1,\dots,m\}$.
\end{enumerate}

\begin{Remark}
The condition forces the $i$-th row of $\Sigma$ to depend only on the $i$-th coordinate of the space variable $x\in\R^{m}$ a.s.~for all $i\in\{1,\dots,m\}$. This is the case if and only if
\begin{equation*}
\Sigma(x) = 
\begin{pmatrix}
\hat{\Sigma}^{(1,1)}(x_{1}) & \cdots & \hat{\Sigma}^{(1,d)}(x_{1})\\
\vdots & \ddots & \vdots\\
\hat{\Sigma}^{(m,1)}(x_{m}) & \cdots & \hat{\Sigma}^{(m,d)}(x_{m})
\end{pmatrix}
\quad\text{for any $x\in\R^{m}$}\quad\text{a.s.}
\end{equation*}
for some admissible map $\hat{\Sigma}:[t_{0},\infty[\times\Omega\times\R\rightarrow\R^{m\times d}$. Further, for every $n\in\N$ and $\alpha_{n}\in [\frac{1}{2},1]$, we may take $\hat{\rho}_{n}(v) = v^{\alpha_{n}}$ for all $v\geq 0$ as appearing modulus of continuity.
\end{Remark}

\begin{Example}\label{ex:sum of power functions}
In the \emph{one-dimensional setting} $m=1$ let $l\in\N$, $\varphi:\R\rightarrow\R^{d\times l}$ be locally $\frac{1}{2}$-H\oe lder continuous and $\zeta$ and $\eta$ be progressively measurable processes with values in $\R^{d}$ and $\R^{d\times l}$, respectively, such that
\begin{equation*}
\Sigma^{(i)}(x) = \zeta^{(i)} + \eta^{(i,1)}\varphi_{i,1}(x) + \cdots + \eta^{(i,l)}\varphi_{i,l}(x) 
\end{equation*}
for any $x\in\R$ and $i\in\{1,\dots,d\}$. Then~\eqref{co:1} is satisfied as soon as $\eta$ admits locally square-integrable paths. In particular, for $\alpha\in [\frac{1}{2},\infty[^{d\times l}$ the choice
\begin{equation*}
\varphi_{i,1}(x) = |x|^{\alpha_{i,1}},\dots,\varphi_{i,l}(x) = |x|^{\alpha_{i,l}}
\end{equation*}
for all $x\in\R$ and $i\in\{1,\dots,d\}$ is possible.
\end{Example}

Next, we consider a \emph{partial uniform error and continuity condition} on the coordinates of $\B$ and $\tilde{\B}$, which allows for discontinuities in the space variable.
\begin{enumerate}[label=(C.\arabic*), ref=C.\arabic*, leftmargin=\widthof{(C.2)} + \labelsep]
\setcounter{enumi}{1}
\item\label{co:2} There are $\rho,\varrho\in C(\R_{+})$ that are positive on $]0,\infty[$ and vanish at $0$ and $\R_{+}^{m}$-valued progressively measurable processes $\varepsilon,\eta,\lambda$ with locally integrable paths such that 
\begin{equation*}
\begin{split}
\mathrm{sgn}(x_{i}-\tilde{x}_{i})\big(\B^{(i)}(x,\mu) - \tilde{\B}^{(i)}(\tilde{x},\tilde{\mu})\big) &\leq \varepsilon^{(i)} + \eta^{(i)}\rho(|x-\tilde{x}|_{1}) + \lambda^{(i)}\varrho\big(\vartheta(\mu,\tilde{\mu})\big)
\end{split}
\end{equation*}
for all $x,\tilde{x}\in\R^{m}$ and $\mu,\tilde{\mu}\in\mathcal{P}$ a.s.~for any $i\in\{1,\dots,m\}$. Further, $\rho^{\frac{1}{\alpha}}$ is concave for some $\alpha\in ]0,1]$, $\varrho$ is increasing and $E[|\varepsilon|_{1}]$, $[|\eta|_{1}]_{\frac{1}{1-\alpha}}$, $E[|\lambda|_{1}]$ are locally integrable.
\end{enumerate}

\begin{Remark}
If $\B = \tilde{\B}$ and $\varepsilon = 0$, then~\eqref{co:2} reduces to a \emph{partial uniform continuity condition} on $\B$. In the particular case that there are $\alpha_{0},\beta_{0}\in ]0,1]$ such that
\begin{equation*}
\alpha_{0}\leq \alpha,\quad \rho(v) = v^{\alpha_{0}}\quad\text{and}\quad \varrho(v) = v^{\beta_{0}}\quad\text{for all $v\geq 0$,}
\end{equation*}
we obtain a \emph{partial H\oe lder condition}. Following this reasoning, the term $\varepsilon$ provides an \emph{error estimate}. That is, if~\eqref{co:2} holds for $\B = \tilde{\B}$ and $\varepsilon = 0$, then it is valid in general as soon as $|\B^{(i)} - \tilde{\B}^{(i)}| \leq \varepsilon^{(i)}$ for any $i\in\{1,\dots,m\}$.
\end{Remark}

As our first estimation result is based on Bihari's inequality, we recall that for any $\rho\in C(\R_{+})$ that is positive on $]0,\infty[$ and vanishes at $0$, the function $\Phi_{\rho}\in C^{1}(]0,\infty[)$ defined via
\begin{equation}\label{eq:rho-map 1}
\Phi_{\rho}(w) := \int_{1}^{w}\!\frac{1}{\rho(v)}\,dv
\end{equation}
is a strictly increasing $C^{1}$-diffeomorphism onto the interval $]\Phi_{\rho}(0),\Phi_{\rho}(\infty)[$. Let $D_{\rho}$ denote the set of all $(v,w)\in\R_{+}^{2}$ with $\Phi_{\rho}(v) + w < \Phi_{\rho}(\infty)$. Then $\Psi_{\rho}:D_{\rho}\rightarrow\R_{+}$ given by
\begin{equation}\label{eq:rho-map 2}
\Psi_{\rho}(v,w) := \Phi_{\rho}^{-1}\big(\Phi_{\rho}(v) + w\big)
\end{equation}
is a continuous extension of a locally Lipschitz continuous function and it is increasing in each variable.

Based on these considerations, we obtain a \emph{quantitative $L^{1}$-bound} under~\eqref{co:2} by introducing for fixed $\beta\in ]0,1]$ the two measurable locally integrable functions
\begin{equation*}
\gamma := \alpha \big[|\eta|_{1}\big]_{\frac{1}{1-\alpha}} + \beta E\big[|\lambda|_{1}\big] \quad\text{and}\quad \delta := (1-\alpha)\big[|\eta|_{1}\big]_{\frac{1}{1-\alpha}} + (1-\beta)E\big[|\lambda|_{1}\big].
\end{equation*}

\begin{Proposition}\label{pr:abstract stability estimate}
Let~\eqref{co:1} and~\eqref{co:2} hold, $X$ and $\tilde{X}$ be two solutions to~\eqref{eq:McKean-Vlasov} with respective drifts $\B$ and $\tilde{\B}$ such that
\begin{equation*}
E\big[|Y_{t_{0}}|_{1}\big] < \infty\quad\text{for $Y:=X-\tilde{X}$}
\end{equation*}
and $E[|\lambda|_{1}]\varrho(\vartheta(\mathcal{L}(X),\mathcal{L}(\tilde{X}))$ is locally integrable. Define $\varrho_{0}\in C(\R_{+})$ by
\begin{equation*}
\varrho_{0}(v) := \rho(v)^{\frac{1}{\alpha}}\vee\varrho(v)^{\frac{1}{\beta}}
\end{equation*}
and suppose that $\Phi_{\rho^{\frac{1}{\alpha}}}(\infty) = \infty$ or $E[|\eta|_{1}\rho(|Y|_{1})]$ is locally integrable. Then $E[|Y|_{1}]$ is locally bounded and
\begin{equation*}
\sup_{s\in [t_{0},t]} E\big[|Y_{s}|_{1}\big] \leq \Psi_{\varrho_{0}}\bigg(E\big[|Y_{t_{0}}|_{1}\big] + \int_{t_{0}}^{t}\!E\big[|\varepsilon_{s}|_{1}\big] + \delta(s)\,ds,\int_{t_{0}}^{t} \!\gamma(s)\,ds\bigg)
\end{equation*}
for any $t\in [t_{0},t_{0}^{+}[$, where $t_{0}^{+} > t_{0}$ stands for the supremum over all $t\geq t_{0}$ for which
\begin{equation*}
\bigg(E\big[|Y_{t_{0}}|_{1}\big] + \int_{t_{0}}^{t}\!E\big[|\varepsilon_{s}|_{1}\big] + \delta(s)\,ds,\int_{t_{0}}^{t} \!\gamma(s)\,ds\bigg)\in D_{\varrho_{0}}.
\end{equation*}
\end{Proposition}

\begin{Remark}
If in fact $\Phi_{\varrho_{0}}(\infty) = \infty$, then $\Phi_{\rho^{\frac{1}{\alpha}}}(\infty) = \infty$ and $D_{\varrho_{0}} = \R_{+}^{2}$. In this case, $Y$ is bounded in $L^{1}(\Omega,\mathcal{F},P)$ as soon as $E[|\varepsilon|_{1}]$, $\gamma$ and $\delta$ are integrable. Further, if
\begin{equation*}
\Phi_{\varrho_{0}}(0) = - \infty,\quad Y_{t_{0}} = 0\quad\text{a.s.}\quad\text{and}\quad E\big[|\varepsilon|_{1}\big] = \delta = 0\quad\text{a.e.},
\end{equation*}
then $t_{0}^{+} = \infty$ and $Y=0$ a.s. This implication will be used to deduce pathwise uniqueness.
\end{Remark}

\begin{Example}\label{ex:power functions}
Let $\alpha_{0}\in ]0,\alpha]$ be such that $\rho(v) = \varrho(v) = v^{\alpha_{0}}$ for all $v\geq 0$ and $\alpha = \beta$. Then $\Phi_{\varrho_{0}}(\infty) = \infty$ and in the case $\alpha_{0} < \alpha$ we get that
\begin{equation*}
\Psi_{\varrho_{0}}(v,w) = \bigg(v^{1-\hat{\alpha}} + (1 - \hat{\alpha})w\bigg)^{\frac{1}{1-\hat{\alpha}}}\quad\text{for all $v,w\geq 0$}
\end{equation*}
with $\hat{\alpha}:= \frac{\alpha_{0}}{\alpha}$. If instead $\alpha_{0}=\alpha$, then $\Psi_{\varrho_{0}}(v,w) = v\exp(w)$ for any $v,w\geq 0$. Thus, Proposition~\ref{pr:abstract stability estimate} provides an estimate for any choice of $\hat{\alpha}\in ]0,1]$.
\end{Example}

To infer pathwise uniqueness for~\eqref{eq:McKean-Vlasov} from the comparison, we specify~\eqref{co:2} for $\B=\tilde{\B}$, $\varepsilon = 0$, $\alpha=1$ and a deterministic choice of $\eta$. Further, if $\B$ does not depend on $\mu\in\mathcal{P}$, then it suffices to pose this condition on compact sets only.
\begin{enumerate}[label=(C.\arabic*), ref=C.\arabic*, leftmargin=\widthof{(C.3)} + \labelsep]
\setcounter{enumi}{2}
\item\label{co:3} There are $\rho,\varrho\in C(\R_{+})$ that are positive on $]0,\infty[$ and vanish at $0$, a measurable locally integrable function $\eta:[t_{0},\infty[\rightarrow\R_{+}$ and an $\R_{+}^{m}$-valued progressively measurable process $\lambda$ with locally integrable paths such that
\begin{equation*}
\mathrm{sgn}(x_{i} - \tilde{x}_{i})\big(\B^{(i)}(x,\mu) - \B^{(i)}(\tilde{x},\tilde{\mu})\big)\leq \eta \rho(|x-\tilde{x}|_{1}) + \lambda^{(i)}\varrho\big(\vartheta(\mu,\tilde{\mu})\big)
\end{equation*}
for any $x,\tilde{x}\in\R^{m}$ and $\mu,\tilde{\mu}\in\mathcal{P}$ a.s.~for all $i\in\{1,\dots,m\}$. In addition, $\rho$ is concave, $\varrho$ is increasing and $E[|\lambda|_{1}]$ is locally integrable.

\item\label{co:4} $\B$ is independent of $\mu\in\mathcal{P}$ and for any $n\in\N$ there are a concave $\rho_{n}\in C(\R_{+})$ that is positive on $]0,\infty[$ and vanishes at $0$ and a measurable locally integrable function $\eta_{n}:[t_{0},\infty[\rightarrow\R_{+}$ satisfying
\begin{equation*}
\mathrm{sgn}(x_{i} - \tilde{x}_{i})\big(\hat{\B}^{(i)}(x) - \hat{\B}^{(i)}(\tilde{x})\big) \leq \eta_{n}\rho_{n}(|x-\tilde{x}|_{1}) 
\end{equation*}
for all $x,\tilde{x}\in\R^{m}$ with $|x|\vee |\tilde{x}| \leq n$ a.s.~for any $i\in\{1,\dots,m\}$, where $\hat{\B} := \B(\cdot,\hat{\mu})$ for some $\hat{\mu}\in\mathcal{P}$.
\end{enumerate}

Under~\eqref{co:1} and~\eqref{co:3}, pathwise uniqueness can be shown with respect to the Borel measurable functional $\Theta:[t_{0},\infty[\times\mathcal{P}\times\mathcal{P}\times\mathcal{P}(\R^{m})\rightarrow [0,\infty]$ defined by
\begin{equation*}
\Theta(s,\mu,\tilde{\mu},\nu):= E\big[|\lambda_{s}|_{1}\big]\varrho\big(\vartheta(\mu,\tilde{\mu})\big) + \mathbbm{1}_{]0,\infty[}\big(\Phi_{\rho}(\infty)\big)\eta(s)\int_{\R^{m}}\rho(|y|_{1})\,\nu(dy).
\end{equation*}

\begin{Corollary}\label{co:pathwise uniqueness}
Suppose that~\eqref{co:1} is satisfied.
\begin{enumerate}[(i)]
\item Let~\eqref{co:3} be valid and $\int_{0}^{1}\frac{1}{\rho(v)\vee\varrho(v)}\,dv = \infty$. Then pathwise uniqueness holds for~\eqref{eq:McKean-Vlasov} relative to $\Theta$.
\item If~\eqref{co:4} is satisfied and $\int_{0}^{1}\!\frac{1}{\rho_{n}(v)}\,dv = \infty$ for each $n\in\N$, then pathwise uniqueness for the SDE~\eqref{eq:McKean-Vlasov} follows.
\end{enumerate}
\end{Corollary}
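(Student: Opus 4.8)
Both parts will be derived from the quantitative $L^{1}$-comparison of Proposition~\ref{pr:abstract stability estimate} specialised to $\tilde{\B} = \B$, $\varepsilon = 0$, $\alpha = 1$ and the free parameter $\beta = 1$; with these choices~\eqref{Condition 3} becomes exactly~\eqref{Condition 4}, the moduli~\eqref{eq:special modulus of continuity} reduce to $\rho_{0}(v) = (c_{0}v)\vee\rho(v)$ and $\varrho_{0}(v) = (c_{0}v)\vee\rho(v)\vee\varrho(c_{\mathscr{P}}v)$, and $\varepsilon^{(0)} = 0 = \delta$ a.e. Since $\rho$ is concave and vanishes at $0$, it is non-decreasing with $\rho(v)\leq\rho(1)v$ for $v\geq 1$, so $\rho_{0}$ is at most linear at infinity and $\Phi_{\rho_{0}}(\infty) = \infty$; moreover the assumption $\int_{0}^{1}((c_{0}v)\vee\rho(v)\vee\varrho(c_{\mathscr{P}}v))^{-1}\,dv = \infty$ says precisely $\Phi_{\varrho_{0}}(0) = -\infty$. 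For part~(i), take solutions $X,\tilde{X}$ with $X_{t_{0}} = \tilde{X}_{t_{0}}$ a.s.\ and $s\mapsto\Theta(s,P_{X_{s}},P_{\tilde{X}_{s}},P_{X_{s}-\tilde{X}_{s}})$ locally integrable, and set $Y := X-\tilde{X}$. Unwinding $\Theta$ shows $E[\lambda^{(0)}]\varrho(\vartheta(P_{X},P_{\tilde{X}}))$ is locally integrable, and $E[|Y_{t_{0}}|] = 0$; together with $\Phi_{\rho_{0}}(\infty) = \infty$ the hypotheses of Proposition~\ref{pr:abstract stability estimate} hold. As, in addition, $\Phi_{\varrho_{0}}(0) = -\infty$, $Y_{t_{0}} = 0$ a.s.\ and $E[\varepsilon^{(0)}] = \delta = 0$ a.e., the Remark following that proposition gives $t_{0}^{+} = \infty$ and $Y = 0$ a.s.; since $|\cdot|\leq|\cdot|_{U}$, this makes $X$ and $\tilde{X}$ indistinguishable.

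For part~(ii), $\B$ is independent of $\mu$, so~\eqref{eq:McKean-Vlasov} is an SDE and~\eqref{Condition 5} is merely the local-in-space version of~\eqref{Condition 4} with the $\mu$-term dropped; the plan is to localise in time. For solutions $X,\tilde{X}$ with $X_{t_{0}} = \tilde{X}_{t_{0}}$ a.s., let $\tau_{n} := \inf\{t\geq t_{0}\,:\,|X_{t}|\vee|\tilde{X}_{t}|\geq n\}$, which are stopping times increasing to $\infty$ a.s.\ by path-continuity. On $[t_{0},\tau_{n}]$ the trajectories of $X$ and $\tilde{X}$ stay in the closed ball of radius $n$, so along them~\eqref{Condition 5} and~\eqref{Condition 2} furnish exactly the global-type estimates needed in the proof of Proposition~\ref{pr:abstract stability estimate}, now with $\eta_{n},\rho_{n}$ replacing $\eta,\rho$, with $\lambda = 0$, and with $\rho_{0,n}(v) := (c_{0}v)\vee\rho_{n}(v)$ satisfying $\Phi_{\rho_{0,n}}(0) = -\infty$ by $\int_{0}^{1}\rho_{0,n}(v)^{-1}\,dv = \infty$. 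Running that estimate up to $\tau_{n}$ with $Y_{t_{0}} = 0$ yields $E[|X_{t\wedge\tau_{n}} - \tilde{X}_{t\wedge\tau_{n}}|] = 0$ for all $t\in[t_{0},\infty)$, hence $X = \tilde{X}$ on $[t_{0},\tau_{n}]$ a.s., and letting $n\uparrow\infty$ proves pathwise uniqueness. No integrability condition is needed here, as stopping at $\tau_{n}$ bounds all quantities --- this is why part~(ii) is unconditional.

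The main obstacle is the time-localisation in part~(ii): one needs a version of Proposition~\ref{pr:abstract stability estimate} valid up to a stopping time. The underlying It\^{o}/Bihari argument is local in time, the local martingale part being controlled exactly by stopping at $\tau_{n}$, while the Osgood conditions $\int_{0}^{1}\hat{\rho}_{n}(v)^{-2}\,dv = \infty$ from~\eqref{Condition 2} and $\int_{0}^{1}\rho_{0,n}(v)^{-1}\,dv = \infty$ are precisely what make the Yamada--Watanabe absorption of the diffusion and drift increments succeed; so this stopped form should follow either directly from the Section~\ref{se:4} estimates for random It\^{o} processes or by repeating the proof of Proposition~\ref{pr:abstract stability estimate} with $\tau_{n}$ inserted throughout. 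One cannot instead reduce part~(ii) to part~(i) by truncating the coefficients with the radial retraction of Example~\ref{ex:radial retraction}, since that map does not respect the random orthonormal basis $U$ and may therefore violate the sign-type bound in~\eqref{Condition 5}.
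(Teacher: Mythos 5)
Your proof is correct. Part~(i) is exactly the paper's argument: specialise Proposition~\ref{pr:abstract stability estimate} to $\tilde{\B}=\B$, $\varepsilon=0$, $\alpha=\beta=1$, observe that concavity of $\rho$ forces $\Phi_{\rho_{0}}(\infty)=\infty$, and use $\Phi_{\varrho_{0}}(0)=-\infty$ to conclude $\Psi_{\varrho_{0}}(0,\cdot)=0$, hence $Y=0$. For part~(ii) you take a genuinely different route. The paper does \emph{not} prove a stopped version of Proposition~\ref{pr:abstract stability estimate}; instead it defines $\null_{n}\hat{\B}_{s}(x):=\hat{\B}_{s}(\varphi_{n}(x))\mathbbm{1}_{\{\tau_{n}>s\}}$ and $\null_{n}\Sigma_{s}(x):=\Sigma_{s}(x)\mathbbm{1}_{\{\tau_{n}>s\}}$ with $\varphi_{n}$ the radial retraction, notes that $X^{\tau_{n}}$ and $\tilde{X}^{\tau_{n}}$ solve the equation with these coefficients, checks~\eqref{Condition 2} and~\eqref{Condition 3} for the truncated pair (with $(\eta^{(i)},\lambda^{(i)})=(\eta_{n},0)$), and applies the \emph{unstopped} Proposition~\ref{pr:abstract stability estimate} verbatim; your worry about the retraction and the sign $\mathrm{sgn}(\null_{i}U'(x-\tilde{x}))$ versus $\mathrm{sgn}(\null_{i}U'(\varphi_{n}(x)-\varphi_{n}(\tilde{x})))$ is a real subtlety at the level of verifying the pointwise condition for all $x,\tilde{x}$, but it is harmless in the application because the condition is only ever evaluated at $x=X_{s}^{\tau_{n}}$, $\tilde{x}=\tilde{X}_{s}^{\tau_{n}}$, where for $s<\tau_{n}$ the retraction is the identity and for $s\geq\tau_{n}$ the indicator annihilates everything, so your claim that this reduction ``cannot'' work is too strong. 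Conversely, your route does close: the stopped comparison you need is not a new result but is already what Proposition~\ref{pr:auxiliary moment estimate} delivers, since its hypotheses are only required for a.e.~$s<\tau$, which is precisely the region where~\eqref{Condition 5} and~\eqref{Condition 2} apply along the paths, and the Jensen--Bihari step with $\Phi_{(c_{0}v)\vee\rho_{n}}(0)=-\infty$ then yields $E[|Y_{t\wedge\tau_{n}}|]=0$. So the two approaches differ only in where the localisation is performed --- in the coefficients (paper) versus in the estimate (you); yours avoids re-verifying the global conditions for truncated coefficients at the cost of having to restate the comparison estimate with a stopping time, which you correctly identify as the one step you have not written out.
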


\begin{Remark}\label{re:pathwise uniqueness}
Let $\B$ and $\Sigma$ be deterministic, that is,~\eqref{eq:deterministic coefficients} holds. Then the corollary yields pathwise uniqueness for~\eqref{eq:deterministic McKean-Vlasov} in the standard sense if the conditions are specified as follows:
\begin{enumerate}[(1)]
\item The Osgood condition~\eqref{co:1} on compact sets is formulated when $\hat{\eta}^{(n)}$ is independent of $\omega\in\Omega$ for all $n\in\N$.

\item The partial uniform continuity condition~\eqref{co:3} is stated when $\lambda$ is deterministic and the required estimate~\eqref{eq:specific domination condition} for $\vartheta$ is replaced by the domination condition~\eqref{eq:domination condition}.
\end{enumerate}
\end{Remark}

Let us consider a \emph{class of drift maps} to which these uniqueness results apply.

\begin{Example}\label{ex:class of drift maps}
Let $F$ and $G$ be two $\R^{m}$-valued admissible maps on $[t_{0},\infty[\times\Omega\times\R$ and $[t_{0},\infty[\times\Omega\times\R^{m}\times\mathcal{P}$, respectively, such that
\begin{equation*}
\B^{(i)}(x,\mu) = F^{(i)}(x_{i}) + G^{(i)}(x,\mu)
\end{equation*}
for all $(x,\mu)\in\R^{m}\times\mathcal{P}$ and $i\in\{1,\dots,m\}$. Then~\eqref{co:3} and~\eqref{co:4} are implied by the following respective conditions:
\begin{enumerate}[(1)]
\item There is $\varrho\in C(\R_{+})$ that is positive on $]0,\infty[$ and vanishes at $0$, a measurable locally integrable function $\eta:[t_{0},\infty[\rightarrow\R_{+}$ and an $\R_{+}^{m}$-valued progressively measurable process $\lambda$ such that
\begin{align*}
\mathrm{sgn}(v - \tilde{v})\big(F^{(i)}(v) - F^{(i)}(\tilde{v})\big) & \leq \eta |v-\tilde{v}|\\
\text{and}\quad |G^{(i)}(x,\mu) - G^{(i)}(\tilde{x},\tilde{\mu})| & \leq \eta |x-\tilde{x}| + \lambda^{(i)}\varrho\big(\vartheta(\mu,\tilde{\mu})\big)
\end{align*}
for any $v,\tilde{v}\in\R$, $x,\tilde{x}\in\R^{m}$, $\mu,\tilde{\mu}\in\mathcal{P}$ and $i\in\{1,\dots,m\}$. Moreover, $\varrho$ is increasing, $\lambda$ has locally integrable paths and $E[|\lambda|_{1}]$ is locally integrable.

\item $G$ is independent of $\mu\in\mathcal{P}$ and for each $n\in\N$ there are $\rho_{n}\in C(\R_{+})$ that is positive on $]0,\infty[$ and vanishes at $0$ and a measurable locally integrable function $\eta_{n}:[t_{0},\infty[\rightarrow\R_{+}$ such that
\begin{equation*}
\mathrm{sgn}(v - \tilde{v})\big(F^{(i)}(v) - F^{(i)}(\tilde{v})\big)\leq \eta_{n}\rho_{n}(|v-\tilde{v}|),\quad |\hat{G}(x) - \hat{G}(\tilde{x})|\leq \eta_{n} \rho_{n}(|x-\tilde{x}|)
\end{equation*}
for all $v,\tilde{v}\in [-n,n]$, $x,\tilde{x}\in\R^{m}$ with $|x|\vee|\tilde{x}|\leq n$ and $i\in\{1,\dots,m\}$, where $\hat{G}:=G(\cdot,\hat{\mu})$ for fixed $\hat{\mu}\in\mathcal{P}$. Further, $\rho_{n}$ is concave and increasing.
\end{enumerate}
For instance, for $l\in\N$, an $\R_{+}^{m\times l}$-valued progressively measurable process $\eta$ and $\alpha\in ]0,\infty[^{l}$, we could take
\begin{equation}\label{eq:power sum representation}
F^{(i)}(v) = - \eta^{(i,1)}(v^{+})^{\alpha_{1}} - \cdots - \eta^{(i,l)}(v^{+})^{\alpha_{l}} 
\end{equation}
for all $v\in\R$ and $i\in\{1,\dots,m\}$, in which case both conditions~(1) and~(2) are met by $F$. In the general case, if~\eqref{co:1} is imposed on $\Sigma$, then Corollary~\ref{co:pathwise uniqueness} entails two assertions:
\begin{enumerate}[(1)]
\setcounter{enumi}{2}
\item Let condition~(1) hold. Then we have pathwise uniqueness for~\eqref{eq:McKean-Vlasov} relative to the Borel measurable functional $\Theta:[t_{0},\infty[\times\mathcal{P}\times\mathcal{P}\rightarrow [0,\infty]$ given by
\begin{equation*}
\Theta(s,\mu,\tilde{\mu}):= E\big[|\lambda_{s}|_{1}\big]\varrho\big(\vartheta(\mu,\tilde{\mu})\big),\quad\text{provided}\quad \int_{0}^{1}\!\frac{1}{v\vee \varrho(v)}\,dv = \infty.
\end{equation*}
In particular, $\varrho(v) = v$ for all $v > 0$ and $\varrho(v)= \alpha v(|\log(v)| + 1)$ for any $v > 0$ with $\alpha >0$ are feasible choices.

\item There is pathwise uniqueness for the SDE~\eqref{eq:McKean-Vlasov} under condition~(2) as soon as $\int_{0}^{1}\!\frac{1}{\rho_{n}(v)}\,dv = \infty$ for each $n\in\N$.
\end{enumerate}
\end{Example}

\subsection{An explicit moment estimate and stability in first moment}

Now we provide a comparison bound, from which stability results in first moment can be inferred. In this regard, we require a \emph{partial uniform error and mixed H\oe lder continuity condition} on $\B$ and $\tilde{\B}$:
\begin{enumerate}[label=(C.\arabic*), ref=C.\arabic*, leftmargin=\widthof{(C.5)} + \labelsep]
\setcounter{enumi}{4}
\item\label{co:5} There are $l\in\N$, $\alpha,\beta\in ]0,1]^{l}$ and progressively measurable processes $\varepsilon$, $\eta$ and $\lambda$ with values in $\R_{+}^{m}$, $\R^{m\times m\times l}$ and $\R_{+}^{m\times l}$, respectively, such that
\begin{equation*}
\begin{split}
\mathrm{sgn}(x_{i} - \tilde{x}_{i})&\big(\B^{(i)}(x,\mu) - \tilde{\B}^{(i)}(\tilde{x},\tilde{\mu})\big) \leq \varepsilon^{(i)}\\
&\quad + \sum_{k=1}^{l}\bigg(\sum_{j=1}^{m}\eta^{(i,j,k)}|x_{j} - \tilde{x}_{j}|^{\alpha_{k}}\bigg) + \lambda^{(i,k)}\vartheta(\mu,\tilde{\mu})^{\beta_{k}}
\end{split}
\end{equation*}
for all $x,\tilde{x}\in\R^{m}$ and $\mu,\tilde{\mu}\in\mathcal{P}$ a.s.~for any $i\in\{1,\dots,m\}$. In addition, $\varepsilon$, $\eta$ and $\lambda$ have locally integrable paths and we have $\eta^{(i,j,k)}\geq 0$, if $i\neq j$, and
\begin{equation*}
E\big[\varepsilon^{(i)}\big],\quad \big[\eta^{(i,j,k)}\big]_{\frac{1}{1-\alpha_{k}}},\quad  E\big[\lambda^{(i,k)}\big]
\end{equation*}
are locally integrable for all $i,j\in\{1,\dots,m\}$ and $k\in\{1,\dots,l\}$.
\end{enumerate}

\begin{Remark}\label{re:condition 5}
If~\eqref{co:5} is satisfied, $\alpha_{1} < \cdots < \alpha_{l}$ and $\beta_{1} < \cdots < \beta_{l}$, then~\eqref{co:2} follows for $\rho,\varrho\in C(\R_{+})$ given by
\begin{equation*}
\rho(v):= v^{\alpha_{1}}\mathbbm{1}_{[0,1]}(v) + v^{\alpha_{l}}\mathbbm{1}_{]1,\infty[}(v)\quad\text{and}\quad\varrho(v):=  v^{\beta_{1}}\mathbbm{1}_{[0,1]}(v) + v^{\beta_{l}}\mathbbm{1}_{]1,\infty[}(v).
\end{equation*}
\end{Remark}

Under~\eqref{co:5}, the two measurable locally integrable functions $\gamma_{1}:[t_{0},\infty[\rightarrow ]-\infty,\infty]$ and $\hat{\delta}_{1}:[t_{0},\infty[\rightarrow [0,\infty]$ defined by
\begin{equation}\label{eq:stability exponential coefficient}
\gamma_{1}(s) := \max_{j = 1,\dots,m}\sum_{k=1}^{l}\alpha_{k}\bigg[\sum_{i=1}^{m}\eta_{s}^{(i,j,k)}\bigg]_{\frac{1}{1-\alpha_{k}}} + \beta_{k}\sum_{i=1}^{m}E[\lambda_{s}^{(i,k)}\big]
\end{equation}
and
\begin{equation}\label{eq:stability drift coefficient}
\hat{\delta}_{1}(s) := \sum_{k=1}^{l}(1-\alpha_{k})\bigg(\sum_{j=1}^{m}\bigg[\sum_{i=1}^{m}\eta_{s}^{(i,j,k)}\bigg]_{\frac{1}{1-\alpha_{k}}}\bigg) + (1-\beta_{k})\sum_{i=1}^{m}E\big[\lambda_{s}^{(i,k)}\big]
\end{equation}
solely depend on the regularity of $\B$ and $\tilde{\B}$. By means of these coefficients we get an \emph{explicit $L^{1}$-comparison estimate} relative to the $[0,\infty]$-valued Borel measurable functional $\Theta$ on $[t_{0},\infty[\times\mathcal{P}\times\mathcal{P}$ given by
\begin{equation}\label{eq:integrability functional}
\Theta(s,\mu,\tilde{\mu}) := \sum_{k=1}^{l} \sum_{i=1}^{m}E\big[\lambda_{s}^{(i,k)}\big]\vartheta(\mu,\tilde{\mu})^{\beta_{k}}.
\end{equation}

\begin{Proposition}\label{pr:moment stability estimate}
Let~\eqref{co:1} and~\eqref{co:5} be valid, $X$ be a solution to~\eqref{eq:McKean-Vlasov} and $\tilde{X}$ solve~\eqref{eq:McKean-Vlasov} with $\tilde{\B}$ instead of $\B$ such that
\begin{equation*}
E\big[|Y_{t_{0}}|_{1}\big] < \infty\quad\text{for $Y:=X-\tilde{X}$}
\end{equation*}
and $\Theta(\cdot,\mathcal{L}(X),\mathcal{L}(\tilde{X}))$ is locally integrable. Then
\begin{equation}\label{eq:specific moment stability estimate}
E\big[|Y_{t}|_{1}\big] \leq e^{\int_{t_{0}}^{t}\!\gamma_{1}(s)\,ds}E\big[|Y_{t_{0}}|_{1}\big] + \int_{t_{0}}^{t}\!e^{\int_{s}^{t}\gamma_{1}(\tilde{s})\,d\tilde{s}}\big(E\big[|\varepsilon_{s}|_{1}\big] + \hat{\delta}_{1}(s)\big)\,ds
\end{equation}
for any $t\geq t_{0}$. In particular, if $\gamma_{1}^{+}$, $E[|\varepsilon|_{1}]$ and $\hat{\delta}_{1}$ are integrable, then $E[|Y|_{1}]$ is bounded. If additionally $\int_{t_{0}}^{\infty}\!\gamma_{1}^{-}(s)\,ds = \infty$, then
\begin{equation*}
\lim_{t\uparrow\infty} E\big[|Y_{t}|_{1}\big] = 0.
\end{equation*}
\end{Proposition}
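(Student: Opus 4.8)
The plan is to view $u(t) := E[|Y_t|_{U_t}]$ as a scalar nonnegative function and derive a linear Gr\"onwall-type differential inequality for it, then integrate. First I would apply It\^o's formula to each coordinate $\null_iU_s'Y_s$ of the rotated difference process; since $Y = X - \tilde X$ solves an SDE with drift $\B_s(X_s,P_{X_s}) - \tilde\B_s(\tilde X_s,P_{\tilde X_s})$ and diffusion $\Sigma_s(X_s) - \Sigma_s(\tilde X_s)$, and $U$ is locally absolutely continuous, the process $\null_iU_s'Y_s$ is an It\^o process whose drift picks up both the term $\null_iU_s'(\B - \tilde\B)$ and the correction $\null_i\dot U_s'Y_s$. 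To handle the absolute value $|\null_iU_s'Y_s|$ I would use the Tanaka / Meyer-It\^o formula (as in the Yamada-Watanabe argument); here the local-time contribution at $0$ is nonnegative and the quadratic-variation/second-order term is controlled precisely by \eqref{Condition 2} together with $\int_0^1\hat\rho_n(v)^{-2}\,dv = \infty$, which is the mechanism that kills the diffusion's second-order term after a standard smoothing-and-limiting step (smooth approximations $\phi_\varepsilon$ of $|\cdot|$ with $\phi_\varepsilon''$ supported near $0$, then $\varepsilon\downarrow 0$). Summing over $i$ gives a bound on $\frac{d}{dt}E[|Y_t|_{U_t}]$ involving $E[\varepsilon^{(0)}]$, the sign-paired drift estimate from \eqref{Condition 6}, and the $\null_i\dot U$ correction controlled by \eqref{Condition 1} through $c_0\zeta_0$.

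The key algebraic step is to turn the right-hand side of \eqref{Condition 6} into something linear in $u(t)$. After taking expectations, the spatial term produces $E\big[\sum_{i,j}\null_k\eta^{(i,j)}|\null_jU'Y|^{\alpha_k}\big]$ and the measure term produces $E[\null_k\lambda^{(i)}]\vartheta(P_X,P_{\tilde X})^{\beta_k}$; using \eqref{eq:specific domination condition} the latter is $\le E[\null_k\lambda^{(i)}]c_{\mathscr P}^{\beta_k}E[|Y|]^{\beta_k} \le E[\null_k\lambda^{(i)}]c_{\mathscr P}^{\beta_k}E[|Y|_U]^{\beta_k}$. Now the crucial inequality is \eqref{eq:essential inequality}: with $X$ there taken to be $\sum_i\null_k\eta^{(i,j)}$ (whose $[\cdot]_{q_{\alpha_k}}$-pseudonorm appears in $\gamma_{\mathscr P},\delta_{\mathscr P}$) and $Y$ there taken to be $|\null_jU'Y|/E[|Y|_U]$ resp.\ $E[|Y|]/E[|Y|_U]$, with the exponent pair $(\alpha_k,0)$ resp.\ $(0,\beta_k)$, one gets $E[\,\cdot\,] \le [\cdot]_{q_{\alpha_k}}\big(1 - \alpha_k + \alpha_k E[|Y|_U]\big)$ and similarly for the $\beta_k$ term. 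Collecting the $\alpha_k E[|Y|_U]$ and $\beta_k c_{\mathscr P}^{\beta_k}E[|Y|_U]$ coefficients gives exactly $\gamma_{\mathscr P}(s)u(s)$, while the leftover constants $(1-\alpha_k)$ and $(1-\beta_k)c_{\mathscr P}^{\beta_k}$ assemble into $\delta_{\mathscr P}(s)$; the $c_0\zeta_0$ piece from \eqref{Condition 1} lands in $\gamma_{\mathscr P}$ as its leading summand. This yields the integral inequality
\begin{equation*}
u(t) \le u(t_0) + \int_{t_0}^t \gamma_{\mathscr P}(s)u(s) + E[\varepsilon_s^{(0)}] + \delta_{\mathscr P}(s)\,ds,
\end{equation*}
and the linear Gr\"onwall lemma delivers \eqref{eq:specific moment stability estimate} verbatim.

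For the two concluding assertions: if $\gamma_{\mathscr P}^+$, $E[\varepsilon^{(0)}]$ and $\delta_{\mathscr P}$ are integrable on $[t_0,\infty)$, then $\int_s^t\gamma_{\mathscr P} \le \int_{t_0}^\infty\gamma_{\mathscr P}^+ =: C < \infty$, so the first term is $\le e^C u(t_0)$ and the integral term is $\le e^C\int_{t_0}^\infty(E[\varepsilon_s^{(0)}] + \delta_{\mathscr P}(s))\,ds$, giving boundedness of $E[|Y|]$ (recall $|\cdot| \le |\cdot|_U$). If moreover $\int_{t_0}^\infty\gamma_{\mathscr P}^-(s)\,ds = \infty$ then $\int_{t_0}^t\gamma_{\mathscr P}(s)\,ds \to -\infty$, so $e^{\int_{t_0}^t\gamma_{\mathscr P}}u(t_0) \to 0$; for the convolution term I would split $\int_{t_0}^t = \int_{t_0}^{T} + \int_T^t$, bound the tail by $e^{\sup_{s\ge T}\int_s^\infty\gamma_{\mathscr P}^+}\int_T^\infty(\cdots)$ which is small for large $T$, and for the fixed head note $e^{\int_s^t\gamma_{\mathscr P}} \le e^{\int_T^t\gamma_{\mathscr P}^+}\cdot e^{-\int_T^t\gamma_{\mathscr P}^-}\to 0$ uniformly in $s\in[t_0,T]$; hence $u(t)\to 0$.

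I expect the main obstacle to be the first paragraph: making the It\^o/Tanaka computation rigorous for the rotated, absolute-valued coordinate process and showing the diffusion's second-order contribution vanishes under \eqref{Condition 2}. This is the genuine Yamada-Watanabe core, requiring the careful smoothing argument and the divergent-integral condition $\int_0^1\hat\rho_n(v)^{-2}\,dv = \infty$, together with a localization (stopping times $\tau_n = \inf\{s : |X_s|\vee|\tilde X_s| > n\}$) to exploit that \eqref{Condition 2} holds only on compact sets; presumably this heavy lifting is done once in Section~\ref{se:4} (the a priori estimates for random It\^o processes) and invoked here, so in the actual proof this step reduces to verifying the hypotheses of the Section~4 machinery. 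The remaining algebra is routine given \eqref{eq:essential inequality}.
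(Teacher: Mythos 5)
Your plan is essentially the paper's proof. The paper sets $\hat{\B}_{s}:=\B_{s}(X_{s},P_{X_{s}})-\tilde{\B}_{s}(\tilde{X}_{s},P_{\tilde{X}_{s}})$ and $\hat{\Sigma}_{s}:=\Sigma_{s}(X_{s})-\Sigma_{s}(\tilde{X}_{s})$, checks that~\eqref{Condition 1},~\eqref{Condition 6} and~\eqref{eq:specific domination condition} put the random It{\^o} process $Y$ into the framework of Section~\ref{se:4}, and then invokes Theorem~\ref{th:moment stability estimate} — exactly the reduction you anticipated; the Yamada--Watanabe smoothing, localization and the application of~\eqref{eq:essential inequality} with exponent pairs $(\alpha_{k},0)$ and $(0,\beta_{k})$ are carried out there (via Proposition~\ref{pr:auxiliary moment estimate}), and the coefficients assemble into $\gamma_{\mathscr{P}}$ and $\delta_{\mathscr{P}}$ precisely as you describe.

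One implementation point deserves care. You propose to derive the unweighted integral inequality $u(t)\leq u(t_{0})+\int_{t_{0}}^{t}(\gamma_{\mathscr{P}}(s)u(s)+E[\varepsilon_{s}^{(0)}]+\delta_{\mathscr{P}}(s))\,ds$ and then apply ``the linear Gr\"onwall lemma.'' The integral form of Gr\"onwall's lemma requires a nonnegative kernel, and $\gamma_{\mathscr{P}}$ is signed — its negativity is the whole point for stability. For a merely locally bounded measurable $u$, the one-time integral inequality with a negative kernel does \emph{not} imply the exponential bound (a constant function can satisfy the inequality while violating the conclusion). The paper avoids this by feeding the weight $u(\tilde{s})=\exp(-\int_{t_{0}}^{\tilde{s}}\gamma_{\mathscr{P}}(s)\,ds)$ directly into the It{\^o}/expectation estimate of Proposition~\ref{pr:auxiliary moment estimate}, so that the term $\gamma_{\mathscr{P}}E[|Y|_{U}]$ is cancelled by $\dot{u}$ and no Gr\"onwall lemma with signed kernel is ever invoked. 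Alternatively, your route is rescued by observing that the It{\^o} argument yields the inequality between \emph{every} pair of times $r\leq t$, which does permit the telescoping needed for a signed $\gamma_{\mathscr{P}}$; but as stated, ``verbatim'' overstates it. The concluding boundedness and convergence arguments you give are fine and match Remark~\ref{re:moment stability} and the end of the proof of Theorem~\ref{th:moment stability estimate}.
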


\begin{Remark}
While $\varepsilon$ serves as error estimate for $\B - \tilde{\B}$, the coefficient $\hat{\delta}_{1}$ arises from all the partial H\oe lder exponents in $]0,1[$ that appear in~\eqref{co:5}. Namely, $\hat{\delta}_{1}(s)$ vanishes for given $s\geq t_{0}$ if and only if each $k\in\{1,\dots,l\}$ satisfies
\begin{equation*}
\mathbbm{1}_{]0,1[}(\alpha_{k})\max_{j=1,\dots,m}\sum_{i=1}^{m}\eta_{s}^{(i,j,k)} \leq 0\quad\text{a.s.}\quad\text{and}\quad \mathbbm{1}_{]0,1[}(\beta_{k})\max_{i=1,\dots,m}\lambda_{s}^{(i,k)} = 0\quad\text{a.s.}
\end{equation*}
\end{Remark}

Although the bound in Proposition~\ref{pr:abstract stability estimate} applies to different types of moduli of continuity that are specified in~\eqref{co:2}, the estimate~\eqref{eq:specific moment stability estimate} is generally sharper, as Example~\ref{ex:power functions} and Remark~\ref{re:condition 5} show, bearing in mind that $\gamma_{1}$ may take negative values.

Based on the \emph{partial mixed H\oe lder continuity condition}~\eqref{co:5} for $\B$, assuming that $\B = \tilde{\B}$ and $\varepsilon = 0$ there, we get (asymptotic) stability in moment as direct consequence.

\begin{Corollary}\label{co:moment stability}
Let~\eqref{co:1} and~\eqref{co:5} be satisfied for $\B=\tilde{\B}$ and $\varepsilon = 0$. Then~\eqref{eq:McKean-Vlasov} is (asymptotically) stable in moment relative to $\Theta$ defined by~\eqref{eq:integrability functional} if $\gamma_{1}^{+}$ and $\hat{\delta}_{1}$ are integrable (and $\int_{t_{0}}^{\infty}\gamma_{1}^{-}(s)\,ds = \infty$).
\end{Corollary}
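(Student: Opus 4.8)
The plan is to deduce the statement directly from Proposition~\ref{pr:moment stability estimate}. Fix any two solutions $X$ and $\tilde{X}$ to~\eqref{eq:McKean-Vlasov} with $E[|X_{t_{0}} - \tilde{X}_{t_{0}}|] < \infty$ for which the map $s\mapsto\Theta(s,P_{X_{s}},P_{\tilde{X}_{s}})$ is locally integrable, as required in Definition~\ref{de:stability}, and put $Y := X - \tilde{X}$. Since~\eqref{Condition 1},~\eqref{Condition 2} and~\eqref{Condition 6} hold with $\B = \tilde{\B}$ and $\varepsilon = 0$, the quantity $E[\varepsilon^{(0)}]$ vanishes, hence is integrable; as $\gamma_{\mathscr{P}}^{+}$ and $\delta_{\mathscr{P}}$ are integrable by hypothesis, all assumptions of Proposition~\ref{pr:moment stability estimate} are in force, so $E[|Y|]$ is bounded, which is exactly stability in moment relative to the functional $\Theta$ from~\eqref{eq:integrability functional}. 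If moreover $\int_{t_{0}}^{\infty}\!\gamma_{\mathscr{P}}^{-}(s)\,ds = \infty$, the same proposition gives $\lim_{t\uparrow\infty}E[|Y_{t}|] = 0$, i.e.~asymptotic stability in moment.

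For completeness I would recall why the two concluding assertions of Proposition~\ref{pr:moment stability estimate} hold. Specialising its estimate~\eqref{eq:specific moment stability estimate} to $\varepsilon = 0$ and writing $\Gamma(t) := \int_{t_{0}}^{t}\!\gamma_{\mathscr{P}}(s)\,ds$, the two-sided bound $|\cdot|\leq|\cdot|_{U}$ yields
\begin{equation*}
E\big[|Y_{t}|\big] \leq E\big[|Y_{t}|_{U_{t}}\big] \leq e^{\Gamma(t)}E\big[|Y_{t_{0}}|_{U_{t_{0}}}\big] + \int_{t_{0}}^{t}\!e^{\Gamma(t) - \Gamma(s)}\delta_{\mathscr{P}}(s)\,ds
\end{equation*}
for every $t\in [t_{0},\infty)$. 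Since $\Gamma(t) - \Gamma(s) \leq \int_{t_{0}}^{\infty}\!\gamma_{\mathscr{P}}^{+}(\tilde{s})\,d\tilde{s} =: c_{1} < \infty$ whenever $t_{0}\leq s\leq t$, the right-hand side is dominated by $e^{c_{1}}\big(\sqrt{m}\,E[|Y_{t_{0}}|] + \int_{t_{0}}^{\infty}\!\delta_{\mathscr{P}}(s)\,ds\big)$, a bound independent of $t$, so $\sup_{t\geq t_{0}}E[|Y_{t}|] < \infty$. If additionally $\int_{t_{0}}^{\infty}\!\gamma_{\mathscr{P}}^{-}(s)\,ds = \infty$, then $\Gamma(t) = \int_{t_{0}}^{t}\!\gamma_{\mathscr{P}}^{+}(s)\,ds - \int_{t_{0}}^{t}\!\gamma_{\mathscr{P}}^{-}(s)\,ds \to -\infty$, whence $e^{\Gamma(t)}\to 0$; writing the integral term as $\int_{t_{0}}^{\infty}\!\mathbbm{1}_{\{s\leq t\}}e^{\Gamma(t) - \Gamma(s)}\delta_{\mathscr{P}}(s)\,ds$, whose integrand is dominated by the integrable function $e^{c_{1}}\delta_{\mathscr{P}}$ and tends pointwise to $0$ as $t\uparrow\infty$, dominated convergence forces this term to $0$ too, and hence $E[|Y_{t}|]\to 0$.

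I expect no genuine obstacle once Proposition~\ref{pr:moment stability estimate} is available; the corollary is essentially bookkeeping around the exponential weight $e^{\Gamma(t) - \Gamma(s)}$. Two points deserve care: $\delta_{\mathscr{P}}$ need not vanish although $\varepsilon = 0$, since it aggregates the contributions of all partial H\oe lder exponents in $(0,1)$ occurring in~\eqref{Condition 6}, so its integrability is used in an essential way; and the decay in the asymptotic case rests on the uniform bound $e^{\Gamma(t) - \Gamma(s)}\leq e^{c_{1}}$, which is exactly what integrability of $\gamma_{\mathscr{P}}^{+}$ supplies — the very feature under which $\Gamma(t)\to -\infty$ becomes equivalent to $\int_{t_{0}}^{\infty}\!\gamma_{\mathscr{P}}^{-}(s)\,ds = \infty$.
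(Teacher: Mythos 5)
Your proposal is correct and takes essentially the same route as the paper, whose proof of this corollary is a one-line reduction to Proposition~\ref{pr:moment stability estimate} with $\B=\tilde{\B}$ and $\varepsilon=0$. The additional bookkeeping you supply (the uniform bound $e^{\Gamma(t)-\Gamma(s)}\leq e^{c_{1}}$ and the dominated-convergence argument for the decay) merely re-derives the concluding assertions of that proposition, which the paper establishes in the proof of Theorem~\ref{th:moment stability estimate}, so nothing is missing.
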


For a \emph{description of the $L^{1}$-boundedness and the rate of $L^{1}$-convergence for solutions} in Corollary~\ref{co:exponential moment stability} below, let us restrict~\eqref{co:5} to a \emph{partial Lipschitz continuity condition}:
\begin{enumerate}[label=(C.\arabic*), ref=C.\arabic*, leftmargin=\widthof{(C.6)} + \labelsep]
\setcounter{enumi}{5}
\item\label{co:6} There are a measurable locally integrable map $\eta:[t_{0},\infty[\rightarrow\R^{m\times m}$ and an $\R_{+}^{m}$-valued progressively measurable process $\lambda$ with locally integrable paths such that
\begin{equation*}
\mathrm{sgn}(x_{i} - \tilde{x}_{i})\big(\B^{(i)}(x,\mu) - \B^{(i)}(\tilde{x},\tilde{\mu})\big) \leq \bigg(\sum_{j=1}^{m}\eta_{i,j}|x_{j} - \tilde{x}_{j}|\bigg) + \lambda^{(i)}\vartheta(\mu,\tilde{\mu})
\end{equation*}
for any $x,\tilde{x}\in\R^{m}$ and $\mu,\tilde{\mu}\in\mathcal{P}$ a.s.~for all $i\in\{1,\dots,m\}$. Moreover, $\eta_{i,j}\geq 0$ for all $i,j\in\{1,\dots,m\}$ with $i\neq j$ and $E[|\lambda|_{1}]$ is locally integrable.
\end{enumerate}

If the preceding condition holds, then the coefficient $\hat{\delta}_{1}$ in~\eqref{eq:stability drift coefficient} vanishes and the \emph{stability coefficient} $\gamma_{1}$ in~\eqref{eq:stability exponential coefficient} becomes
\begin{equation}\label{eq:special stability exponential coefficient 2}
\gamma_{1} = \max_{j = 1,\dots,m} \eta_{1,j} + \cdots + \eta_{m,j} + E\big[|\lambda|_{1}\big].
\end{equation}
Remarkably, $\gamma_{1}$ is merely influenced by the regularity of $\B$. Further, under~\eqref{co:6}, the functional in~\eqref{eq:integrability functional} is of the form
\begin{equation}\label{eq:integrability functional 2}
\Theta(\cdot,\mu,\tilde{\mu}) = E\big[|\lambda|_{1}\big]\vartheta(\mu,\tilde{\mu})
\end{equation}
for any $\mu,\tilde{\mu}\in\mathcal{P}$. To deduce \emph{exponential moment stability}, we impose an upper bound on $\gamma_{1}$ that involves sums of power functions.
\begin{enumerate}[label=(C.\arabic*), ref=C.\arabic*, leftmargin=\widthof{(C.7)} + \labelsep]
\setcounter{enumi}{6}
\item\label{co:7} Condition~\eqref{co:6} is valid and there are $l\in\N$, $\alpha\in ]0,\infty[^{l}$ and $\hat{\lambda},s\in\R^{l}$ such that $\alpha_{1} < \cdots < \alpha_{l}$, $\hat{\lambda}_{l} < 0$ and
\begin{equation*}
\gamma_{1}(s) \leq \hat{\lambda}_{1}\alpha_{1}(s-s_{1})^{\alpha_{1}-1} + \cdots + \hat{\lambda}_{l}\alpha_{l}(s-s_{l})^{\alpha_{l}-1}\quad\text{for a.e.~$s\geq t_{1}$}
\end{equation*}
for some $t_{1}\geq t_{0}$ satisfying $\max_{k=1,\dots,l} s_{k}\leq t_{1}$.
\end{enumerate}

By using the negativity of the constant $\hat{\lambda}_{l}$ associated to the greatest power $\alpha_{l}$ in the preceding condition, the following stability properties hold.

\begin{Corollary}\label{co:exponential moment stability}
Under~\eqref{co:1}, the following two assertions hold:
\begin{enumerate}[(i)]
\item If~\eqref{co:6} is valid and $\gamma_{1}^{+}$ is integrable, then the difference $Y$ of any two solutions $X$ and $\tilde{X}$ to~\eqref{eq:McKean-Vlasov} satisfies
\begin{equation*}
\sup_{t\geq t_{0}} e^{\int_{t_{0}}^{t}\!\gamma_{1}^{-}(s)\,ds}E\big[|Y_{t}|_{1}\big] < \infty,
\end{equation*}
provided $E[|Y_{t_{0}}|_{1}] < \infty$ and $\Theta(\cdot,\mathcal{L}(X),\mathcal{L}(\tilde{X}))$ is locally integrable. If in addition $\gamma_{1}^{-}$ fails to be integrable, then
\begin{equation*}
\lim_{t\uparrow\infty} e^{\alpha\int_{t_{0}}^{t}\!\gamma_{1}^{-}(s)\,ds}E\big[|Y_{t}|_{1}\big] = 0
\quad\text{for any $\alpha\in [0,1[$.}
\end{equation*}

\item Let~\eqref{co:7} be valid. Then~\eqref{eq:McKean-Vlasov} is $\alpha_{l}$-exponentially stable in moment relative to $\Theta$ with any moment $\alpha_{l}$-Lyapunov exponent in $]\hat{\lambda}_{l},0[$. Further, $\hat{\lambda}_{l}$ serves as Lyapunov exponent as soon as
\begin{equation*}
\max_{k=1,\dots,l}\hat{\lambda}_{k} \leq 0 \quad\text{and}\quad s_{l}\leq t_{0}.
\end{equation*}
\end{enumerate}
\end{Corollary}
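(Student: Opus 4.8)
The plan is to deduce both parts directly from the $L^{1}$-comparison estimate in Proposition~\ref{pr:moment stability estimate}. First I would record that the partial Lipschitz condition~\eqref{Condition 7} is exactly~\eqref{Condition 6} with $l=1$, $\alpha_{1}=\beta_{1}=1$, $\varepsilon=0$ and $\B=\tilde{\B}$ (since in both assertions we compare two solutions of \emph{the same} equation). Hence $\delta_{\mathscr{P}}$ vanishes, $\gamma_{\mathscr{P}}$ reduces to~\eqref{eq:special stability exponential coefficient 2} and $\Theta$ to~\eqref{eq:integrability functional 2}, so Proposition~\ref{pr:moment stability estimate} applies to $Y:=X-\tilde{X}$ for any two solutions $X,\tilde{X}$ with $E[|Y_{t_{0}}|]<\infty$ and $\Theta(\cdot,P_{X},P_{\tilde{X}})$ locally integrable, and together with $|\cdot|\leq|\cdot|_{U}\leq\sqrt{m}\,|\cdot|$ from~\eqref{eq:random norm} yields
\[
E\big[|Y_{t}|\big]\leq E\big[|Y_{t}|_{U_{t}}\big]\leq e^{\int_{t_{0}}^{t}\gamma_{\mathscr{P}}(s)\,ds}E\big[|Y_{t_{0}}|_{U_{t_{0}}}\big]\leq\sqrt{m}\,e^{\int_{t_{0}}^{t}\gamma_{\mathscr{P}}(s)\,ds}E\big[|Y_{t_{0}}|\big]
\]
for all $t\in[t_{0},\infty)$. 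This single bound is the engine for everything that follows.

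For assertion (i), split $\int_{t_{0}}^{t}\gamma_{\mathscr{P}}=\int_{t_{0}}^{t}\gamma_{\mathscr{P}}^{+}-\int_{t_{0}}^{t}\gamma_{\mathscr{P}}^{-}$ and use $C:=\int_{t_{0}}^{\infty}\gamma_{\mathscr{P}}^{+}(s)\,ds<\infty$. Then $e^{\int_{t_{0}}^{t}\gamma_{\mathscr{P}}^{-}(s)\,ds}E[|Y_{t}|]\leq\sqrt{m}\,e^{C}E[|Y_{t_{0}}|]$ uniformly in $t$, which is the claimed supremum estimate; and if $\gamma_{\mathscr{P}}^{-}$ is not integrable, then for $\alpha\in[0,1)$ one has $e^{\alpha\int_{t_{0}}^{t}\gamma_{\mathscr{P}}^{-}(s)\,ds}E[|Y_{t}|]\leq\sqrt{m}\,e^{C}e^{-(1-\alpha)\int_{t_{0}}^{t}\gamma_{\mathscr{P}}^{-}(s)\,ds}E[|Y_{t_{0}}|]$, which tends to $0$ as $t\uparrow\infty$ because $1-\alpha>0$ and the exponent diverges.

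For assertion (ii), split at $t_{1}$. On $[t_{0},t_{1}]$ local integrability of $\gamma_{\mathscr{P}}\in\mathscr{L}_{loc}^{1}(\R)$ gives $\int_{t_{0}}^{t}\gamma_{\mathscr{P}}\leq C_{3}:=\int_{t_{0}}^{t_{1}}\gamma_{\mathscr{P}}^{+}<\infty$. On $[t_{1},\infty)$, integrating the bound in~\eqref{Condition 8} gives, with $t-s_{k}\geq t-t_{1}\geq 0$,
\[
\int_{t_{1}}^{t}\gamma_{\mathscr{P}}(s)\,ds\leq\sum_{k=1}^{l}\hat{\lambda}_{k}\big[(t-s_{k})^{\alpha_{k}}-(t_{1}-s_{k})^{\alpha_{k}}\big].
\]
Now fix any $\lambda\in(\hat{\lambda}_{l},0)$. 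Since $\alpha_{1}<\cdots<\alpha_{l}$ and $\hat{\lambda}_{l}-\lambda<0$, the function $t\mapsto\sum_{k=1}^{l}\hat{\lambda}_{k}(t-s_{k})^{\alpha_{k}}-\lambda(t-t_{0})^{\alpha_{l}}$ is continuous on $[t_{1},\infty)$ and, as $t\uparrow\infty$, behaves like $(\hat{\lambda}_{l}-\lambda)t^{\alpha_{l}}$ up to terms of order $t^{\alpha_{l-1}}=o(t^{\alpha_{l}})$, hence tends to $-\infty$ and is bounded above by some constant $C_{2}$. Combining the two regimes, $\int_{t_{0}}^{t}\gamma_{\mathscr{P}}\leq\text{const}+\lambda(t-t_{0})^{\alpha_{l}}$ for $t\geq t_{1}$, while on $[t_{0},t_{1}]$ one absorbs $C_{3}$ into a constant using $\lambda(t-t_{0})^{\alpha_{l}}\geq\lambda(t_{1}-t_{0})^{\alpha_{l}}$ (as $\lambda<0$). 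Thus there is $c\in\R_{+}$ depending only on $m$ and $\gamma_{\mathscr{P}}$ (through the finite constants above and $t_{1}$) with $E[|Y_{t}|]\leq c\,e^{\lambda(t-t_{0})^{\alpha_{l}}}E[|Y_{t_{0}}|]$ for all $t\geq t_{0}$, i.e.~$\lambda$ is a moment $\alpha_{l}$-Lyapunov exponent; since $\lambda\in(\hat{\lambda}_{l},0)$ was arbitrary, the first claim follows. If in addition $\max_{k}\hat{\lambda}_{k}\leq 0$ and $s_{l}\leq t_{0}$, then for $t\geq t_{1}$ each term $\hat{\lambda}_{k}(t-s_{k})^{\alpha_{k}}$ with $k<l$ is $\leq 0$ and $\hat{\lambda}_{l}(t-s_{l})^{\alpha_{l}}\leq\hat{\lambda}_{l}(t-t_{0})^{\alpha_{l}}$, so the same argument runs verbatim with $\lambda$ replaced by $\hat{\lambda}_{l}$ and \emph{no} asymptotic comparison is needed.

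The only non-routine step is the asymptotic comparison in (ii) — showing $\sup_{t\geq t_{1}}\bigl(\sum_{k}\hat{\lambda}_{k}(t-s_{k})^{\alpha_{k}}-\lambda(t-t_{0})^{\alpha_{l}}\bigr)<\infty$ whenever $\lambda>\hat{\lambda}_{l}$. This rests on $\alpha_{l}$ being the strictly largest exponent and on the sign of $\hat{\lambda}_{l}-\lambda$, and one must keep in mind that the power functions are merely continuous (not $C^{1}$) at $t=s_{k}$ when $\alpha_{k}<1$, which is harmless on $[t_{1},\infty)$ since $t_{1}\geq\max_{k}s_{k}$.
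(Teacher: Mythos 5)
Your proof is correct and follows essentially the same route as the paper: part (i) is the $\gamma_{\mathscr{P}}=\gamma_{\mathscr{P}}^{+}-\gamma_{\mathscr{P}}^{-}$ splitting of the exponential bound from Proposition~\ref{pr:moment stability estimate} (the paper packages this as Remark~\ref{re:moment stability}), and part (ii) integrates the power-function bound in~\eqref{Condition 8} and uses that $\alpha_{l}$ is the strictly largest exponent. The only cosmetic difference is that for the first claim of (ii) the paper passes through the limsup statement and Remark~\ref{re:Lyapunov exponent}, whereas you bound $\int_{t_{0}}^{t}\gamma_{\mathscr{P}}(s)\,ds\leq C+\lambda(t-t_{0})^{\alpha_{l}}$ directly — which makes the uniformity of the constant $c$ over all pairs of solutions slightly more explicit, but is the same computation.
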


Let us conclude with a specification of Example~\ref{ex:class of drift maps}, which for $m=1$ plays a major role in the \emph{volatility modelling} in~\cite[Section~3.1]{BriGraKal24}, as the representation~\eqref{eq:power sum representation} suggests.

\begin{Example}\label{ex:class of drift maps 2}
Let $l\in\N$, $\zeta:[t_{0},\infty[\rightarrow\R^{m}$ be measurable and locally integrable, $\eta$ be an $\R_{+}^{m\times l}$-valued progressively measurable process with locally integrable paths,
\begin{equation*}
f:\R\rightarrow\R^{m\times l}
\end{equation*}
be measurable and $G$ be an $\R^{m}$-valued admissible map on $[t_{0},\infty[\times\Omega\times\R^{m}\times\mathcal{P}$ such that
\begin{equation}\label{eq:sum of functions}
\B^{(i)}(x,\mu) = \zeta_{i} x_{i} + \eta^{(i,1)}f_{i,1}(x_{i}) + \cdots + \eta^{(i,l)}f_{i,l}(x_{i}) + G^{(i)}(x,\mu)
\end{equation}
for all $(x,\mu)\in\R^{m}\times\mathcal{P}$ and $i\in\{1,\dots,m\}$. Then~\eqref{co:5} follows for $\B=\tilde{\B}$ and $\varepsilon = 0$ from the following condition:
\begin{enumerate}[(1)]
\item There are $\alpha_{0},\beta_{0}\in ]0,1]$, $\alpha\in ]0,1]^{l}$, $\tilde{\eta}\in\R^{m\times l}$ and $\R_{+}^{m}$-valued progressively measurable processes $\overline{\eta}$ and $\lambda$ with locally integrable paths such that 
\begin{align*}
\mathrm{sgn}(v-\tilde{v})\big(f_{i,k}(v)-f_{i,k}(\tilde{v})\big) &\leq  \tilde{\eta}_{i,k}|v-\tilde{v}|^{\alpha_{k}}\quad\text{and}\\
|G^{(i)}(x,\mu) - G^{(i)}(\tilde{x},\tilde{\mu})| &\leq \overline{\eta}^{(i)}|x-\tilde{x}|_{1}^{\alpha_{0}} + \lambda^{(i)}\vartheta(\mu,\tilde{\mu})^{\beta_{0}}
\end{align*}
for all $v,\tilde{v}\in\R$, $x,\tilde{x}\in\R^{m}$, $\mu,\tilde{\mu}\in\mathcal{P}$, $i\in\{1,\dots,m\}$ and $k\in\{1,\dots,l\}$. Further, $\sum_{k=1}^{l}\sum_{j=1}^{m}[\eta^{(j,k)}\tilde{\eta}_{j,k}]_{\frac{1}{1-\alpha_{k}}}$, $E[|\overline{\eta}|_{1}]$ and $E[|\lambda|_{1}]$ are locally integrable.
\end{enumerate}
For instance, if $f_{i,k}$ is decreasing for all $i\in\{1,\dots,m\}$ and $k\in\{1,\dots,l\}$, then $\tilde{\eta} = 0$ is possible in~(1). In general, if~\eqref{co:1} is met by $\Sigma$ and~(1) holds, then Proposition~\ref{pr:moment stability estimate} and Corollaries~\ref{co:moment stability} and~\ref{co:exponential moment stability} yield the following statements:
\begin{enumerate}[(1)]
\setcounter{enumi}{1}
\item The difference $Y$ of any two solutions $X$ and $\tilde{X}$ to~\eqref{eq:McKean-Vlasov} for which $E[|Y_{t_{0}}|_{1}] < \infty$ and $E[|\lambda|_{1}]\vartheta(\mathcal{L}(X),\mathcal{L}(\tilde{X}))^{\beta_{0}}$ is locally integrable satisfies the estimate~\eqref{eq:specific moment stability estimate} with
\begin{equation*}
\gamma_{1} = \max_{j = 1,\dots,m} \underline{\eta}_{j} + \alpha_{0}\big[|\overline{\eta}|_{1}\big]_{\frac{1}{1-\alpha_{0}}} + \beta_{0}E\big[|\lambda|_{1}\big],
\end{equation*}
$\hat{\delta}_{1} = (\sum_{k=1}^{l}(1-\alpha_{k})\sum_{j=1}^{m}[\eta^{(j,k)}\tilde{\eta}_{j,k}\big]_{\frac{1}{1-\alpha_{k}}}) + (1-\alpha_{0})m[|\overline{\eta}|_{1}]_{\frac{1}{1-\alpha_{0}}} + (1-\beta_{0})E[|\lambda|_{1}]$ and $\varepsilon = 0$, where the measurable map $\underline{\eta}:[t_{0},\infty[\rightarrow ]-\infty,\infty]^{m}$ is given by
\begin{equation}\label{eq:special coefficients}
\underline{\eta}_{j}(s) :=\zeta_{j}(s) + \sum_{k=1}^{l}\alpha_{k}\big[\eta_{s}^{(j,k)}\tilde{\eta}_{j,k}\big]_{\frac{1}{1-\alpha_{k}}}.
\end{equation}

\item For equation~\eqref{eq:McKean-Vlasov} to be (asymptotically) stable in moment with respect to the Borel measurable functional $\Theta:[t_{0},\infty[\times\mathcal{P}\times\mathcal{P}\rightarrow [0,\infty]$ given by
\begin{equation*}
\Theta(s,\mu,\tilde{\mu}):= E\big[|\lambda_{s}|_{1}\big]\vartheta(\mu,\tilde{\mu})^{\beta_{0}},
\end{equation*}
it is sufficient that the coefficients $\underline{\eta}_{1}^{+},\dots,\underline{\eta}_{m}^{+}$, $[|\overline{\eta}|_{1}]_{\frac{1}{1-\alpha_{0}}}$ and $E[|\lambda|_{1}]$ are integrable (and $\int_{t_{0}}^{\infty}\!\min_{j = 1,\dots,m}\underline{\eta}_{j}^{-}(s) \,ds$ $= \infty$).

\item Assume that $\alpha_{0} = \dots = \alpha_{l} = \beta_{0} = 1$. This ensures the validity of~\eqref{co:6}. Hence, if there are $\hat{\lambda} < 0$ and $\hat{\alpha} > 0$ such that
\begin{equation*}
\max_{j = 1,\dots,m}\underline{\eta}_{j}(s) + \big[|\overline{\eta}_{s}|_{1}\big]_{\infty}  +  E\big[|\lambda_{s}|_{1}\big]\leq \hat{\lambda}\hat{\alpha}(s-t_{0})^{\hat{\alpha}-1}\quad\text{for a.e.~$s\geq t_{0}$},
\end{equation*}
then $\hat{\alpha}$-exponential stability in moment relative to $\Theta$ with Lyapunov exponent $\hat{\lambda}$ holds.
\end{enumerate}
\end{Example}

\subsection{Pathwise stability and moment growth estimates}

In this section, our first aim is to derive pathwise exponential stability for~\eqref{eq:McKean-Vlasov}. For this purpose, we replace the Osgood condition~\eqref{co:1} on compact sets by the following stronger \emph{$\frac{1}{2}$-H\oe lder continuity condition} for the diffusion:
\begin{enumerate}[label=(C.\arabic*), ref=C.\arabic*, leftmargin=\widthof{(C.9)} + \labelsep]
\setcounter{enumi}{7}
\item\label{co:8} There is a measurable locally square-integrable map $\hat{\eta}:[t_{0},\infty[\rightarrow\R_{+}^{m}$ such that $|e_{i}'(\Sigma(x)-\Sigma(\tilde{x}))| \leq \hat{\eta}_{i}|x_{i} -\tilde{x}_{i}|^{\frac{1}{2}}$ for any $x,\tilde{x}\in\R^{m}$ a.s.~for all $i\in\{1,\dots,m\}$.
\end{enumerate}

Moreover, we require that the partial Lipschitz condition~\eqref{co:6} for the drift $\B$ holds, and the regularity coefficients $\lambda$ and $\hat{\eta}$ satisfy a suitable growth condition, which follows if $E[|\lambda|_{1}]$ and $\hat{\eta}$ are locally bounded, for instance.
\begin{enumerate}[label=(C.\arabic*), ref=C.\arabic*, leftmargin=\widthof{(C.10)} + \labelsep]
\setcounter{enumi}{8}
\item\label{co:9} Conditions~\eqref{co:6} and~\eqref{co:8} are satisfied and there is some $\hat{\delta} > 0$ such that $\sup_{t\geq t_{0}}\int_{t}^{t+\hat{\delta}}\!f(s)\,ds < \infty$ for $f\in\{E[|\lambda|_{1}],|\hat{\eta}|_{1}^{2}\}$.
\end{enumerate}

Under the following abstract condition on the stability coefficient $\gamma_{1}$, we obtain a \emph{general pathwise stability estimate} from Theorem~\ref{th:pathwise stability}, a pathwise result for random It{\^o} processes based on the Borel-Cantelli Lemma.
\begin{enumerate}[label=(C.\arabic*), ref=C.\arabic*, leftmargin=\widthof{(C.11)} + \labelsep]
\setcounter{enumi}{9}
\item\label{co:10} Condition~\eqref{co:9} holds and there are $\hat{\varepsilon}\in ]0,1[$ and a strictly increasing sequence $(t_{n})_{n\in\N}$ in $[t_{0},\infty[$ such that $\gamma_{1} \leq 0$ a.e.~on $[t_{1},\infty[$,
\begin{equation*}
\sup_{n\in\N} (t_{n+1} - t_{n}) < \hat{\delta},\quad\lim_{n\uparrow\infty} t_{n} = \infty
\end{equation*}
and $\sum_{n=1}^{\infty}\exp(\frac{\varepsilon}{2}\int_{t_{1}}^{t_{n}}\!\gamma_{1}(s)\,ds) < \infty$ for all $\varepsilon\in ]0,\hat{\varepsilon}[$.
\end{enumerate}

\begin{Proposition}\label{pr:specific pathwise asymptotic analysis}
Assume~\eqref{co:10} and let $X$ and $\tilde{X}$ be two solutions to~\eqref{eq:McKean-Vlasov} such that $E[|\lambda|_{1}]\vartheta(\mathcal{L}(X),\mathcal{L}(\tilde{X}))$ is locally integrable. Then $Y:=X-\tilde{X}$ satisfies
\begin{equation*}
\limsup_{t\uparrow\infty} \frac{1}{\varphi(t)}\log(|Y_{t}|_{1}) \leq \frac{1}{2}\limsup_{n\uparrow\infty} \frac{1}{\varphi(t_{n})}\int_{t_{1}}^{t_{n}}\!\gamma_{1}(s)\,ds\quad\text{a.s.}
\end{equation*}
for any increasing continuous function $\varphi:[t_{1},\infty[\rightarrow\R_{+}$ that is positive on $]t_{1},\infty[$, provided $E[|Y_{t_{0}}|_{1}]$ $< \infty$ or $\lambda = 0$.
\end{Proposition}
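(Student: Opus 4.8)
The plan is to realise $Y:=X-\tilde X$ as a random Itô process whose time-dependent random norm $|Y|_{U}$ obeys a one-sided Itô differential inequality of the type handled by the pathwise asymptotic result for random Itô processes of Section~\ref{se:4} (Theorem~\ref{th:pathwise stability}), and then to read off the claim from that theorem: the summability hypothesis in~\eqref{Condition 11} together with the window control in~\eqref{Condition 10} are exactly what the abstract theorem requires, and its conclusion is precisely the asserted $\limsup$-bound, the factor $1/2$ being built into it.

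First I would write $dY_{t}=D_{t}\,dt+\Xi_{t}\,dW_{t}$ with $D_{t}:=\B_{t}(X_{t},P_{X_{t}})-\B_{t}(\tilde X_{t},P_{\tilde X_{t}})$ and $\Xi_{t}:=\Sigma_{t}(X_{t})-\Sigma_{t}(\tilde X_{t})$, pass to the rotated coordinates $\null_{i}U_{t}'Y_{t}$, and carry out the multidimensional Yamada--Watanabe step: apply Itô's formula to $\phi_{\varepsilon}(\null_{i}U_{t}'Y_{t})$ for $C^{2}$-approximations $\phi_{\varepsilon}$ of $|\cdot|$ with $0\le\phi_{\varepsilon}'\le 1$, $\phi_{\varepsilon}'\to\mathrm{sgn}$ and $\sup_{v}|v|\phi_{\varepsilon}''(v)\to 0$ (admissible since the modulus $v\mapsto v^{1/2}$ from~\eqref{Condition 9} satisfies $\int_{0}^{1}v^{-1}\,dv=\infty$), sum over $i$, and let $\varepsilon\downarrow 0$. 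Three terms have to be matched: the rotation term $\phi_{\varepsilon}'(\null_{i}U_{t}'Y_{t})\,\null_{i}\dot U_{t}'Y_{t}$ is controlled by~\eqref{Condition 1} and yields $c_{0}\zeta_{0}(t)|Y_{t}|_{U_{t}}$ in the limit; the drift term $\phi_{\varepsilon}'(\null_{i}U_{t}'Y_{t})\,\null_{i}U_{t}'D_{t}$ is controlled by~\eqref{Condition 7}; and the Itô correction $\tfrac12\phi_{\varepsilon}''(\null_{i}U_{t}'Y_{t})|\null_{i}U_{t}'\Xi_{t}|^{2}$ is annihilated by combining~\eqref{Condition 9} with $\sup_{v}|v|\phi_{\varepsilon}''(v)\to 0$ and $\hat\eta\in\mathscr{L}_{loc}^{2}(\R_{+}^{m})$. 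After summation this gives, a.s.\ for $t\ge t_{1}$,
\[
|Y_{t}|_{U_{t}}\le|Y_{t_{1}}|_{U_{t_{1}}}+\int_{t_{1}}^{t}\!\Big(\big(c_{0}\zeta_{0}(s)+\textstyle\max_{j}\sum_{i}\eta_{i,j}(s)\big)|Y_{s}|_{U_{s}}+\lambda^{(0)}(s)\,\vartheta(P_{X_{s}},P_{\tilde X_{s}})\Big)\,ds+N_{t}-N_{t_{1}},
\]
where $N_{t}:=\sum_{i}\int_{t_{1}}^{t}\mathrm{sgn}(\null_{i}U_{s}'Y_{s})\,\null_{i}U_{s}'\Xi_{s}\,dW_{s}$ is a continuous local martingale with $d\langle N\rangle_{s}\le m\,\hat\eta_{0}(s)^{2}|Y_{s}|_{U_{s}}\,ds$ by~\eqref{Condition 9}.

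Next I would dispose of the mean-field coupling. By~\eqref{eq:specific domination condition} and $|\cdot|\le|\cdot|_{U}$ we have $\vartheta(P_{X_{s}},P_{\tilde X_{s}})\le c_{\mathscr{P}}E[|Y_{s}|]\le c_{\mathscr{P}}E[|Y_{s}|_{U_{s}}]$; if $\lambda\neq 0$, then $E[|Y_{t_{0}}|]<\infty$ by hypothesis, and Proposition~\ref{pr:moment stability estimate}, applied with $\tilde\B=\B$, $l=1$, $\alpha_{1}=\beta_{1}=1$, $\varepsilon=0$ (so $\delta_{\mathscr{P}}=0$; note~\eqref{Condition 9} implies~\eqref{Condition 2} and $\Theta(\cdot,P_{X},P_{\tilde X})=E[\lambda^{(0)}]\vartheta(P_{X},P_{\tilde X})$ is locally integrable by assumption), gives $E[|Y_{s}|_{U_{s}}]\le e^{\int_{t_{0}}^{s}\gamma_{\mathscr{P}}(r)\,dr}E[|Y_{t_{0}}|_{U_{t_{0}}}]$. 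Substituting this bound for $E[|Y_{s}|_{U_{s}}]$ — equivalently, taking expectations in the displayed inequality and invoking Gronwall — shows that, after this bootstrap, the forcing $\lambda^{(0)}\vartheta(P_{X},P_{\tilde X})$ contributes exactly the $c_{\mathscr{P}}E[\lambda^{(0)}]$ summand of $\gamma_{\mathscr{P}}$ in~\eqref{eq:special stability exponential coefficient 2}, so that $|Y|_{U}$ is a nonnegative random Itô process with linear-growth coefficient dominated by $\gamma_{\mathscr{P}}$ and diffusion coefficient dominated by $m^{1/2}\hat\eta_{0}$; if $\lambda=0$ the coupling is absent and this step is vacuous. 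Now~\eqref{Condition 10} supplies the uniform $\hat\delta$-window integrability of $E[\lambda^{(0)}]$ and $\hat\eta_{0}^{2}$, and~\eqref{Condition 11} supplies $\gamma_{\mathscr{P}}\le 0$ on $[t_{1},\infty)$, a strictly increasing sequence $(t_{n})_{n\in\N}$ with $\sup_{n}(t_{n+1}-t_{n})<\hat\delta$ and $t_{n}\to\infty$, and $\sum_{n}\exp(\tfrac{\varepsilon}{2}\int_{t_{1}}^{t_{n}}\gamma_{\mathscr{P}}(s)\,ds)<\infty$ for $\varepsilon\in(0,\hat\varepsilon)$ — precisely the hypotheses of Theorem~\ref{th:pathwise stability} for the process $|Y|_{U}$, whose conclusion is the asserted bound for every increasing $\varphi:[t_{1},\infty)\to\R_{+}$ positive on $(t_{1},\infty)$. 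The factor $1/2$ enters in Theorem~\ref{th:pathwise stability} through the square root in the Doob/Borel--Cantelli estimate controlling $\sup_{t\in[t_{n},t_{n+1}]}|\bar N_{t}-\bar N_{t_{n}}|$ for the time-changed martingale $\bar N_{t}=\int_{t_{1}}^{t}e^{-\int_{t_{1}}^{s}\gamma_{\mathscr{P}}}\,dN_{s}$, whose quadratic variation carries exactly one power of $e^{-\int\gamma_{\mathscr{P}}}$.

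The main obstacle I anticipate is the self-referential closure of the mean-field step: the drift bound from~\eqref{Condition 7} genuinely involves $E[|Y_{s}|_{U_{s}}]$, not the pathwise quantity, so one must bootstrap through the already-proved $L^{1}$-estimate of Proposition~\ref{pr:moment stability estimate} in such a way that this term folds into $\gamma_{\mathscr{P}}$ with no residual forcing — this is also exactly where the dichotomy ``$E[|Y_{t_{0}}|]<\infty$ or $\lambda=0$'' is used. A secondary point of care is the multidimensional Yamada--Watanabe bookkeeping: because $U$ is time-dependent, each $\null_{i}U'Y$ picks up a rotation-induced drift that must be absorbed by~\eqref{Condition 1}, and all approximation errors (in particular in the stochastic integral, where convergence is only in probability) must be shown to vanish uniformly enough on compact time intervals for the passage $\varepsilon\downarrow 0$ to be legitimate.
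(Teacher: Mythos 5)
Your proposal is correct and follows essentially the same route as the paper: the paper's proof simply observes that $Y$ is a random It\^o process with drift $\hat{\B}_{s}=\B_{s}(X_{s},P_{X_{s}})-\B_{s}(\tilde{X}_{s},P_{\tilde{X}_{s}})$ and diffusion $\hat{\Sigma}_{s}=\Sigma_{s}(X_{s})-\Sigma_{s}(\tilde{X}_{s})$ and invokes Theorem~\ref{th:pathwise stability} with $V=U$, exactly as you do, and your verification that \eqref{Condition 1}, \eqref{Condition 7}, \eqref{Condition 9}--\eqref{Condition 11} translate into \eqref{Assumption 3}, \eqref{Assumption 8} and the sequence/summability hypotheses (with $\gamma_{c}=\gamma_{\mathscr{P}}$ via $\theta_{1}=\vartheta(P_{X},P_{\tilde{X}})\leq c_{\mathscr{P}}E[|Y|_{U}]$) is accurate. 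The additional material you include — the Yamada--Watanabe approximation, the moment bootstrap, and the Borel--Cantelli step — is a re-sketch of the internals of Theorem~\ref{th:pathwise stability} and its supporting Propositions~\ref{pr:auxiliary supremum moment estimate} and~\ref{pr:supremum moment stability estimate} rather than new content.
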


Now we employ the upper bound~\eqref{co:7} on $\gamma_{1}$ to derive pathwise exponential stability, since~\eqref{co:10} already follows from~\eqref{co:9} in this case, as will be shown.

\begin{Corollary}\label{co:special pathwise stability}
Let~\eqref{co:7} and~\eqref{co:9} hold and define $\Theta:[t_{0},\infty[\times\mathcal{P}\times\mathcal{P}\rightarrow [0,\infty]$ by~\eqref{eq:integrability functional 2}.
\begin{enumerate}[(i)]
\item Then the McKean-Vlasov SDE~\eqref{eq:McKean-Vlasov} is pathwise $\alpha_{l}$-exponentially stable with Lyapunov exponent $\frac{\hat{\lambda}_{l}}{2}$ with respect to an initial absolute moment and $\Theta$.

\item If in fact $\B$ is independent of $\mu\in\mathcal{P}$, then the SDE~\eqref{eq:McKean-Vlasov} is pathwise $\alpha_{l}$-exponentially stable with Lyapunov exponent $\frac{\hat{\lambda}_{l}}{2}$.
\end{enumerate}
\end{Corollary}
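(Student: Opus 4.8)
The plan is to derive this corollary as a direct consequence of Proposition~\ref{pr:specific pathwise asymptotic analysis}, so the only real work is to verify that the hypotheses~\eqref{Condition 10} and~\eqref{Condition 8}, together with~\eqref{Condition 1}, force~\eqref{Condition 11} to hold for a suitably chosen sequence $(t_{n})_{n\in\N}$ and exponent $\hat{\varepsilon}$, and then to evaluate the resulting $\limsup$ with the canonical choice $\varphi(t) = t^{\alpha_{l}}$. First I would recall from~\eqref{Condition 8} that $\gamma_{\mathscr{P}}(s) \leq \sum_{k=1}^{l}\hat{\lambda}_{k}\alpha_{k}(s-s_{k})^{\alpha_{k}-1}$ for a.e.\ $s\in[t_{1},\infty)$ with $\alpha_{1}<\cdots<\alpha_{l}$ and $\hat{\lambda}_{l}<0$; integrating from $t_{1}$ to $t$ gives $\int_{t_{1}}^{t}\gamma_{\mathscr{P}}(s)\,ds \leq \sum_{k=1}^{l}\hat{\lambda}_{k}\big((t-s_{k})^{\alpha_{k}} - (t_{1}-s_{k})^{\alpha_{k}}\big)$, and since $\alpha_{l}$ is the strictly largest exponent and $\hat{\lambda}_{l}<0$, this bound is asymptotically $\hat{\lambda}_{l}t^{\alpha_{l}}(1+o(1))$; in particular $\gamma_{\mathscr{P}}\leq 0$ a.e.\ on $[\hat{t}_{1},\infty)$ for some $\hat{t}_{1}$ (after possibly enlarging $t_{1}$, which is harmless). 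I would take $t_{n} := t_{1} + (n-1)\hat{\delta}/2$, so that $\sup_{n}(t_{n+1}-t_{n}) = \hat{\delta}/2 < \hat{\delta}$ and $t_{n}\uparrow\infty$, with $\hat{\delta}$ furnished by~\eqref{Condition 10}.

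The key step is then the summability condition $\sum_{n=1}^{\infty}\exp\big((\varepsilon/2)\int_{t_{1}}^{t_{n}}\gamma_{\mathscr{P}}(s)\,ds\big) < \infty$ for all $\varepsilon\in(0,\hat{\varepsilon})$. Using the integral bound above, $\int_{t_{1}}^{t_{n}}\gamma_{\mathscr{P}}(s)\,ds \leq -c\,t_{n}^{\alpha_{l}} + C$ for constants $c>0$, $C\in\R$ once $n$ is large (the constant $c$ can be taken as any value in $(0,|\hat{\lambda}_{l}|)$ by the asymptotics, so in particular $c$ is independent of $\varepsilon$). Since $t_{n}$ grows linearly in $n$, we get $t_{n}^{\alpha_{l}}\sim (\hat{\delta}/2)^{\alpha_{l}} n^{\alpha_{l}}$, whence the $n$-th summand is bounded by $\exp(-(\varepsilon/2)c'\,n^{\alpha_{l}})$ up to a multiplicative constant, for some $c'>0$. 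If $\alpha_{l}\geq 1$ this is trivially summable for every $\varepsilon>0$, so one may take $\hat{\varepsilon}$ arbitrary in $(0,1)$. If $\alpha_{l}<1$ the terms $\exp(-(\varepsilon/2)c' n^{\alpha_{l}})$ are still summable since $n^{\alpha_{l}}\to\infty$ faster than any logarithm, so again any $\hat{\varepsilon}\in(0,1)$ works. This establishes~\eqref{Condition 11}. I would also note that the local integrability of $\Theta(\cdot,P_{X},P_{\tilde{X}}) = E[\lambda^{(0)}]\vartheta(P_{X},P_{\tilde{X}})$ is exactly the hypothesis appearing in Proposition~\ref{pr:specific pathwise asymptotic analysis}, and in case~(ii), with $\B$ independent of $\mu$, one has $\lambda = 0$, so that hypothesis is vacuous and no moment condition on $X_{t_{0}}$ is needed---this is where the ``unrestrictedly'' in part~(ii) comes from.

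With~\eqref{Condition 11} verified, Proposition~\ref{pr:specific pathwise asymptotic analysis} applied with $\varphi(t) = t^{\alpha_{l}}$ yields, for $Y=X-\tilde{X}$,
\begin{equation*}
\limsup_{t\uparrow\infty}\frac{1}{t^{\alpha_{l}}}\log(|Y_{t}|_{U_{t}}) \leq \frac{1}{2}\limsup_{n\uparrow\infty}\frac{1}{t_{n}^{\alpha_{l}}}\int_{t_{1}}^{t_{n}}\gamma_{\mathscr{P}}(s)\,ds\quad\text{a.s.}
\end{equation*}
The remaining computation is to evaluate the right-hand $\limsup$: from $\int_{t_{1}}^{t_{n}}\gamma_{\mathscr{P}}(s)\,ds \leq \sum_{k=1}^{l}\hat{\lambda}_{k}\big((t_{n}-s_{k})^{\alpha_{k}}-(t_{1}-s_{k})^{\alpha_{k}}\big)$, divide by $t_{n}^{\alpha_{l}}$ and let $n\to\infty$; every term with $k<l$ contributes $0$ in the limit because $\alpha_{k}<\alpha_{l}$, and the $k=l$ term contributes $\hat{\lambda}_{l}\lim_{n}(t_{n}-s_{l})^{\alpha_{l}}/t_{n}^{\alpha_{l}} = \hat{\lambda}_{l}$. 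Hence the right-hand side is at most $\hat{\lambda}_{l}/2$, and since $|\cdot|\leq|\cdot|_{U}$, the same bound holds with $|Y_{t}|$ in place of $|Y_{t}|_{U_{t}}$. This gives $\limsup_{t\uparrow\infty} t^{-\alpha_{l}}\log|Y_{t}| \leq \hat{\lambda}_{l}/2$ a.s., which is precisely pathwise $\alpha_{l}$-exponential stability with Lyapunov exponent $\hat{\lambda}_{l}/2$, proving~(i); part~(ii) follows identically, the only difference being that $\lambda=0$ removes the integrability proviso, giving the unrestricted statement. The main obstacle I anticipate is purely bookkeeping: making sure the asymptotic estimate on $\int_{t_{1}}^{t_{n}}\gamma_{\mathscr{P}}$ is uniform enough that the summability constant $c$ can be chosen independently of $\varepsilon$ (so that $\hat{\varepsilon}$ can genuinely be taken in all of $(0,1)$), and correctly handling the edge behaviour when some $s_{k}$ are close to $t_{1}$ or when $\alpha_{l}<1$; none of this is deep, but it must be done carefully to ensure~\eqref{Condition 11} is satisfied verbatim.
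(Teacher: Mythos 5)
Your proposal is correct and follows essentially the same route as the paper: verify~\eqref{Condition 11} with an arithmetic sequence $(t_{n})$ of spacing less than $\hat{\delta}$, apply Proposition~\ref{pr:specific pathwise asymptotic analysis} with $\varphi(t)=t^{\alpha_{l}}$, and evaluate the limit using $\lim_{n}(t_{n}-s_{k})^{\alpha_{k}}/t_{n}^{\alpha_{l}}=\mathbbm{1}_{\{l\}}(k)$, with $\lambda=0$ accounting for the unrestricted statement in (ii). The only (immaterial) difference is that you check the summability in~\eqref{Condition 11} by a direct asymptotic bound $\int_{t_{1}}^{t_{n}}\gamma_{\mathscr{P}}\leq -c\,t_{n}^{\alpha_{l}}+C$, whereas the paper packages the same estimate into a separate lemma via the integral test for series.
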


The corollary directly applies to the class of drift coefficients in Example~\ref{ex:class of drift maps 2}.

\begin{Example}\label{ex:class of drift maps 3}
Suppose that $\B$ is of the form~\eqref{eq:sum of functions} and let~\eqref{co:8} hold for $\Sigma$ when $\hat{\eta}$ is locally bounded. Then the following sharpened version of condition~(1) in Example~\ref{ex:class of drift maps 2} implies~\eqref{co:9}:
\begin{enumerate}[(1)]
\setcounter{enumi}{1}
\item There are $\tilde{\eta}\in\R^{m\times l}$, a measurable locally bounded map $\overline{\eta}:[t_{0},\infty[\rightarrow\R_{+}^{m}$ and an $\R_{+}^{m}$-valued progressively measurable process $\lambda$ with locally integrable paths so that 
\begin{align*}
\mathrm{sgn}(v-\tilde{v})\big(f_{i,k}(v)-f_{i,k}(\tilde{v})\big) &\leq  \tilde{\eta}_{i,k}|v-\tilde{v}|\quad\text{and}\\
|G^{(i)}(x,\mu) - G^{(i)}(\tilde{x},\tilde{\mu})| &\leq \overline{\eta}_{i}|x-\tilde{x}|_{1} + \lambda^{(i)}\vartheta(\mu,\tilde{\mu})
\end{align*}
for any $v,\tilde{v}\in\R$, $x,\tilde{x}\in\R^{m}$, $\mu,\tilde{\mu}\in\mathcal{P}$, $i\in\{1,\dots,m\}$ and $k\in\{1,\dots,l\}$. In addition, $\sum_{k=1}^{l}\sum_{j=1}^{m}[\eta^{(j,k)}\tilde{\eta}_{j,k}]_{\infty}$ and $E[|\lambda|_{1}]$ are locally bounded.
\end{enumerate}
Under this condition, the formula~\eqref{eq:special coefficients} reduces to $\underline{\eta}_{j} = \zeta_{j} + \sum_{k=1}^{l}[\eta^{(j,k)}\tilde{\eta}_{j,k}]_{\infty}$ for all $j\in\{1,\dots,m\}$, and we suppose in addition that
\begin{equation*}
\max_{j = 1,\dots,m}\underline{\eta}_{j}(s) + |\overline{\eta}(s)|_{1}  +  E\big[|\lambda_{s}|_{1}\big]\leq \hat{\lambda}\hat{\alpha}(s-t_{0})^{\hat{\alpha}-1}\quad\text{for a.e.~$s\geq t_{0}$}
\end{equation*}
for some $\hat{\lambda} < 0$ and $\hat{\alpha} > 0$. Then Corollary~\ref{co:special pathwise stability} entails the subsequent two assertions:
\begin{enumerate}[(1)]
\setcounter{enumi}{2}
\item Equation~\eqref{eq:McKean-Vlasov} is pathwise $\hat{\alpha}$-exponentially stable with Lyapunov exponent $\frac{\hat{\lambda}}{2}$ relative to an initial absolute moment and $\Theta:[t_{0},\infty[\times\mathcal{P}\times\mathcal{P}\rightarrow [0,\infty]$ given by~\eqref{eq:integrability functional 2}.

\item If $\lambda = 0$, in which case $\B$ is independent of $\mu\in\mathcal{P}$, then the SDE~\eqref{eq:McKean-Vlasov} is pathwise $\hat{\alpha}$-exponentially stable with Lyapunov exponent $\frac{\hat{\lambda}}{2}$.
\end{enumerate}
\end{Example}

Next, we give two first moment bounds for solutions to~\eqref{eq:McKean-Vlasov}, each showing that their absolute moment functions are locally bounded. Hence, local integrability relative to the functional $\Theta$ in each of the Corollaries~\ref{co:pathwise uniqueness},~\ref{co:moment stability},~\ref{co:exponential moment stability} and~\ref{co:special pathwise stability} holds in these cases. First, we require an \emph{Osgood growth condition on compact sets} for $\Sigma$.
\begin{enumerate}[label=(C.\arabic*), ref=C.\arabic*, leftmargin=\widthof{(C.11)} + \labelsep]
\setcounter{enumi}{10}
\item\label{co:11} For any $n\in\N$ there are an increasing $\hat{\phi}_{n}\in C(\R_{+})$ and an $\R_{+}$-valued progressively measurable process $\hat{\upsilon}^{(n)}$ with locally square-integrable paths such that
\begin{equation*}
\text{$\hat{\phi}_{n} > 0$ on $]0,\infty[$,}\quad \int_{0}^{1}\!\frac{1}{\hat{\phi}_{n}(v)^{2}}\,dv = \infty \quad\text{and}\quad |e_{i}'\Sigma(x)| \leq \hat{\upsilon}^{(n)}\hat{\phi}(|x_{i}|)
\end{equation*}
for any $x\in\R^{m}$ with $|x|\leq n$ a.s.~for every $i\in\{1,\dots,m\}$.
\end{enumerate}

\begin{Remark}
If the Osgood condition~\eqref{co:1} on compact sets holds and $\Sigma(0)=0$ a.s., then~\eqref{co:11} follows. In particular, this implication applies to Example~\ref{ex:sum of power functions} when $\varphi(0) = 0$ and $\zeta = 0$.
\end{Remark}

Let us also consider two \emph{partial growth conditions} on $\B$. While the first allows for various kinds of growth behaviour, the second is of affine type and explicitly measures the growth components:
\begin{enumerate}[label=(C.\arabic*), ref=C.\arabic*, leftmargin=\widthof{(C.8)} + \labelsep]
\setcounter{enumi}{11}
\item\label{co:12} There are $\phi,\varphi\in C(\R_{+})$ that are positive on $]0,\infty[$ and vanish at $0$ and $\R_{+}^{m}$-valued progressively measurable processes $\kappa$, $\upsilon$, $\chi$ with locally integrable paths so that
\begin{equation*}
\mathrm{sgn}(x_{i})\B^{(i)}(x,\mu) \leq \kappa^{(i)} + \upsilon^{(i)}\phi(|x|_{1}) + \chi^{(i)}\varphi(\vartheta(\mu,\delta_{0}))
\end{equation*}
for any $(x,\mu)\in\R^{m}\times\mathcal{P}$ a.s.~for each $i\in\{1,\dots,m\}$. Moreover, $\phi^{\frac{1}{\alpha}}$ is concave for some $\alpha\in ]0,1]$, $\varphi$ is increasing and $E[|\kappa|_{1}]$, $E[|\upsilon|_{1}]_{\frac{1}{1-\alpha}}$, $E[|\chi|_{1}]$ are locally integrable.

\item\label{co:13} There are $l\in\N$, $\alpha,\beta\in ]0,1]^{l}$ and progressively measurable processes, $\kappa$, $\upsilon$ and $\chi$ with values in $\R_{+}^{m}$, $\R^{m\times m\times l}$ and $\R_{+}^{m\times l}$, respectively, such that
\begin{equation*}
\begin{split}
\mathrm{sgn}(x_{i})\B^{(i)}(x,\mu) \leq \kappa^{(i)} +  \sum_{k=1}^{l}\bigg(\sum_{j=1}^{m}\upsilon^{(i,j,k)}|x_{j}|^{\alpha_{k}}\bigg) + \chi^{(i,k)}\vartheta(\mu,\delta_{0})^{\beta_{k}}
\end{split}
\end{equation*}
for all $(x,\mu)\in\R^{m}\times\mathcal{P}$ a.s.~for any $i\in\{1,\dots,m\}$. Further, $\kappa$, $\upsilon$ and $\chi$ have locally integrable paths and it holds that $\upsilon^{(i,j,k)}\geq 0$, if $i\neq j$, and
\begin{equation*}
E\big[\kappa^{(i)}\big],\quad \big[\upsilon^{(i,j,k)}\big]_{\frac{1}{1-\alpha_{k}}},\quad E\big[\chi^{(i,k)}\big]
\end{equation*}
are locally integrable for any $i,j\in\{1,\dots,m\}$ and $k\in\{1,\dots,l\}$.
\end{enumerate}

If~\eqref{co:12} holds, we introduce for given $\beta\in ]0,1]$ two measurable locally integrable functions by
\begin{equation*}
g := \alpha\big[|\upsilon|_{1}\big]_{\frac{1}{1-\alpha}} + \beta E\big[|\chi|_{1}\big] \quad\text{and}\quad h := (1-\alpha)\big[|\upsilon|_{1}\big]_{\frac{1}{1-\alpha}} + (1-\beta)E\big[|\chi|_{1}\big]
\end{equation*}
and infer a \emph{quantitative first moment estimate} from Theorem~\ref{th:abstract moment estimate}, which reduces to an explicit bound in the framework of Example~\ref{ex:power functions}.

\begin{Lemma}\label{le:abstract growth estimate}
Let~\eqref{co:11} and~\eqref{co:12} be valid and $X$ be a solution to~\eqref{eq:McKean-Vlasov} for which $E[|X_{t_{0}}|_{1}] < \infty$ and $E[|\chi|_{1}]\vartheta(\mathcal{L}(X),\delta_{0})$ is locally integrable. Define $\varphi_{0}\in C(\R_{+})$ by
\begin{equation*}
\varphi_{0}(v) :=\phi(v)^{\frac{1}{\alpha}}\vee\varphi(v)^{\frac{1}{\beta}}
\end{equation*}
and suppose that $\Phi_{\varphi_{0}}(\infty) = \infty$. Then $E[|X|_{1}]$ is locally bounded and 
\begin{equation*}
\sup_{s\in [t_{0},t]} E\big[|X_{s}|_{1}\big]\leq \Psi_{\varphi_{0}}\bigg(E\big[|X_{t_{0}}|_{1}\big] + \int_{t_{0}}^{t}\!E\big[|\kappa_{s}|_{1}\big] + h(s)\,ds,\int_{t_{0}}^{t}\!g(s)\,ds\bigg)
\end{equation*}
for all $t\geq t_{0}$. In particular, if $E[|\kappa|_{1}]$, $g$ and $h$ are integrable, then $E[|X|_{1}]$ is bounded.
\end{Lemma}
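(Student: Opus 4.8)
\textbf{Proof plan for Lemma~\ref{le:abstract growth estimate}.}

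The plan is to reduce the statement to the abstract first moment estimate for random It\^o processes, namely Theorem~\ref{th:abstract moment estimate}, by checking that a solution $X$ of~\eqref{eq:McKean-Vlasov} is exactly the kind of process covered there once we measure it through the time-dependent random norm $|\cdot|_{U}$. First I would fix a solution $X$ with $E[|X_{t_{0}}|]<\infty$ such that $E[\chi^{(0)}]\vartheta(P_X,\delta_0)$ is locally integrable, and apply It\^o's formula componentwise to $|\null_{i}U_{t}'X_{t}|$, using that $U$ is locally absolutely continuous and orthonormal so that $d(\null_{i}U_{s}'X_{s}) = (\null_{i}\dot{U}_{s}'X_{s}) \, ds + \null_{i}U_{s}'\,dX_{s}$. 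The bounded-variation part is controlled by~\eqref{Condition 1}, which gives $\mathrm{sgn}(\null_{i}U_{s}'X_{s})\null_{i}\dot{U}_{s}'X_{s}\le c_0\zeta_i(s)|X_s|_{U_s}$, while the drift term is controlled by~\eqref{Condition 13} and the martingale part, a stochastic integral whose quadratic variation is governed by~\eqref{Condition 12}, has a locally square-integrable (hence vanishing-expectation, after localisation) contribution. Summing over $i$ and using the definition $|x|_{U} = \sum_{i=1}^m |\null_{i}U'x|$ together with~\eqref{eq:coefficients definition}, I would obtain a differential inequality of the form
\begin{equation*}
E\big[|X_{t}|_{U_{t}}\big] \le E\big[|X_{t_0}|_{U_{t_0}}\big] + \int_{t_0}^{t}\! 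E\big[\kappa_s^{(0)}\big] + c_0\zeta_0(s)E\big[|X_s|_{U_s}\big] + E\big[\upsilon_s^{(0)}\phi(|X_s|_{U_s})\big] + E\big[\chi_s^{(0)}\varphi(\vartheta(P_{X_s},\delta_0))\big]\,ds,
\end{equation*}
at least after a stopping-time localisation to guarantee integrability of everything; this is precisely the hypothesis format of Theorem~\ref{th:abstract moment estimate}.

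The two genuinely non-trivial estimates are the nonlinear terms $E[\upsilon_s^{(0)}\phi(|X_s|_{U_s})]$ and $E[\chi_s^{(0)}\varphi(\vartheta(P_{X_s},\delta_0))]$. For the first I would use that $\phi^{1/\alpha}$ is concave, so $\phi(v) = (\phi(v)^{1/\alpha})^{\alpha}$ with $\phi^{1/\alpha}$ sublinear, and invoke the key inequality~\eqref{eq:essential inequality} with the splitting exponent pair $(\alpha,0)$ (or $(\alpha,\beta)$ where the measure term is present): since $\upsilon^{(0)}\in\mathscr{S}_{loc}^{q_\alpha}(\R_+^m)$, Jensen's inequality applied to the concave $\phi^{1/\alpha}$ and then~\eqref{eq:essential inequality} turn $E[\upsilon_s^{(0)}\phi(|X_s|_{U_s})]$ into a bound in terms of $[\upsilon_s^{(0)}]_{q_\alpha}$ times an affine function of $E[|X_s|_{U_s}]$, producing the $\alpha[\upsilon^{(0)}]_{q_\alpha}$ contribution to $f$ and the $(1-\alpha)[\upsilon^{(0)}]_{q_\alpha}$ contribution to $g$. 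For the measure term I would use the $L^1$-domination~\eqref{eq:specific domination condition}, here in the form $\vartheta(P_{X_s},\delta_0)\le c_{\mathscr{P}}E[|X_s|]\le c_{\mathscr{P}}E[|X_s|_{U_s}]$ (taking $\tilde X = 0$), so that $\varphi(\vartheta(P_{X_s},\delta_0))\le \varphi(c_{\mathscr{P}}E[|X_s|_{U_s}])$; again using concavity of $\varphi^{1/\beta}$, Jensen and~\eqref{eq:essential inequality} with exponent $\beta$ yields the $\beta E[\chi^{(0)}]$ and $(1-\beta)E[\chi^{(0)}]$ contributions to $f$ and $g$ respectively. Combining, the scalar function $u(t) := \sup_{s\in[t_0,t]}E[|X_s|_{U_s}]$ satisfies the hypotheses of Theorem~\ref{th:abstract moment estimate} with drift coefficient $f$, additive term $E[\kappa^{(0)}]+g$ and modulus of continuity $\varphi_0(v) = \phi_0(v)\vee\varphi(c_{\mathscr{P}}v)^{1/\beta}$ where $\phi_0(v) = (c_0 v)\vee\phi(v)^{1/\alpha}$; the assumption $\Phi_{\varphi_0}(\infty)=\infty$ guarantees non-explosion, so $D_{\varphi_0} = \R_+^2$ and the Bihari-type bound $u(t)\le\Psi_{\varphi_0}(\cdot,\cdot)$ holds for all $t\in[t_0,\infty)$, which is exactly the claimed estimate since $|X_t|_{U_t}\ge|X_t|$.

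The main obstacle is not any single inequality but the bookkeeping needed to justify taking expectations: a priori neither $E[|X_s|_{U_s}]$ nor the nonlinear terms are known to be locally integrable, so the differential inequality above must first be derived along a localising sequence of stopping times $\tau_n\uparrow\infty$ (e.g. $\tau_n := \inf\{t\ge t_0 : |X_t|\ge n\}$), where boundedness of the stopped process makes the stochastic integral a true martingale and all integrands bounded; one then applies Theorem~\ref{th:abstract moment estimate} to $u_n(t) := \sup_{s\in[t_0,t]}E[|X_{s\wedge\tau_n}|_{U_{s\wedge\tau_n}}]$, obtains a bound uniform in $n$, and passes to the limit via Fatou's lemma to conclude both that $E[|X|]$ is locally bounded and that the asserted inequality holds. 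The final sentence --- that $E[|X|]$ is bounded when $E[\kappa^{(0)}]$, $f$ and $g$ are integrable --- follows from monotonicity of $\Psi_{\varphi_0}$ in each argument and the observation that the two integrals $\int_{t_0}^t E[\kappa_s^{(0)}]+g(s)\,ds$ and $\int_{t_0}^t f(s)\,ds$ are then bounded in $t$, so $\Psi_{\varphi_0}$ is evaluated on a bounded region of $\R_+^2$.
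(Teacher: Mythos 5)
Your proposal is correct and follows the paper's route exactly: the paper's proof is the one-line observation that $X$ is a random It\^o process with drift $\B(X,P_{X})$ and diffusion $\Sigma(X)$, so that~\eqref{Condition 1},~\eqref{Condition 12},~\eqref{Condition 13} and the domination bound $\vartheta(P_{X_{s}},\delta_{0})\leq c_{\mathscr{P}}E[|X_{s}|_{U_{s}}]$ verify the hypotheses of Theorem~\ref{th:abstract moment estimate}, which then delivers the estimate. The only caveat is that your re-derivation of that theorem's internals speaks of applying It\^o's formula directly to the non-$C^{2}$ map $|\null_{i}U'X|$; in the paper this is handled inside the cited theorem via the Yamada--Watanabe approximations $\psi_{i,n}$ (Lemma~\ref{le:Ito product rule} and Proposition~\ref{pr:auxiliary moment estimate}), whose second-derivative bound is exactly what the Osgood condition~\eqref{Condition 12} controls, so nothing further is needed once you invoke the theorem.
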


Under~\eqref{co:13}, two measurable locally integrable functions $g_{1}:[t_{0},\infty[\rightarrow ]-\infty,\infty]$ and $h_{1}:[t_{0},\infty[\rightarrow [0,\infty]$ can be defined by
\begin{equation}\label{eq:special moment exponential coefficient}
g_{1}(s) := \max_{j=1,\dots,m}\sum_{k=1}^{l}\alpha_{k}\bigg[\sum_{i=1}^{m}\upsilon_{s}^{(i,j,k)}\bigg]_{\frac{1}{1-\alpha_{k}}} + \beta_{k}\sum_{i=1}^{m}E\big[\chi_{s}^{(i,k)}\big]
\end{equation}
and
\begin{equation}\label{eq:special moment drift coefficient}
h_{1}(s) := \sum_{k=1}^{l}(1-\alpha_{k})\bigg(\sum_{j=1}^{m}\bigg[\sum_{i=1}^{m}\upsilon_{s}^{(i,j,k)}\bigg]_{\frac{1}{1-\alpha_{k}}}\bigg) + (1-\beta_{k})\sum_{i=1}^{m}E\big[\chi_{s}^{(i,k)}\big].
\end{equation}
These formulas are in spirit the same as those for the stability coefficients in~\eqref{eq:stability exponential coefficient} and~\eqref{eq:stability drift coefficient}, since the subsequent \emph{explicit $L^{1}$-growth bound} follows, just as the $L^{1}$-comparison estimate in Proposition~\ref{pr:moment stability estimate}, from Theorem~\ref{th:moment stability estimate}.

\begin{Lemma}\label{le:moment growth estimate}
Let~\eqref{co:11} and~\eqref{co:13} be satisfied and $X$ be a solution to~\eqref{eq:McKean-Vlasov} such that $E[|X_{t_{0}}|_{1}] < \infty$ and $\sum_{k=1}^{l}\sum_{i=1}^{m}E[\chi^{(i,k)}]\vartheta(\mathcal{L}(X),\delta_{0})^{\beta_{k}}$ is locally integrable. Then
\begin{equation}\label{eq:moment growth estimate}
E\big[|X_{t}|_{1}\big] \leq e^{\int_{t_{0}}^{t}\!g_{1}(s)\,ds}E\big[|X_{t_{0}}|_{1}\big] + \int_{t_{0}}^{t}\!e^{\int_{s}^{t}\!g_{1}(\tilde{s})\,d\tilde{s}}\big(E\big[|\kappa_{s}|_{1}\big] + h_{1}(s)\big)\,ds
\end{equation}
for each $t\geq t_{0}$. In particular, suppose that $g_{1}^{+}$, $E[|\kappa|_{1}]$ and $h_{1}$ are integrable. Then $E[|X|_{1}]$ is bounded, and  $\lim_{t\uparrow\infty} E[|X_{t}|_{1}] = 0$ as soon as $\int_{t_{0}}^{\infty}\!g_{1}^{-}(s)\,ds = \infty$.
\end{Lemma}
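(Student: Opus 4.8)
The plan is to apply Theorem~\ref{th:moment stability estimate} to the random It\^{o} process $X$ itself, in exactly the way Proposition~\ref{pr:moment stability estimate} applies it to the difference $Y=X-\tilde X$: here $X$ takes the role of the difference of $X$ and the constant zero process, with $\vartheta(\mu,\tilde\mu)$ replaced throughout by $\vartheta(\mu,\delta_{0})$. Thus the task reduces to checking, for the coefficients $\B_{t}(X_{t},P_{X_{t}})$ and $\Sigma_{t}(X_{t})$ in~\eqref{eq:McKean-Vlasov}, the two structural hypotheses of that theorem: an Osgood growth bound on the diffusion relative to the orthonormal basis $\{\null_{1}U,\dots,\null_{m}U\}$, and a sign-tested growth bound on the drift. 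For the diffusion, condition~\eqref{Condition 12} gives, on $\{|X|\le n\}$, the bound $|\null_{i}U'\Sigma(X)|\le\null_{n}\hat{\upsilon}\,\hat{\phi}(|\null_{i}U'X|)$ with $\int_{0}^{1}\hat{\phi}(v)^{-2}\,dv=\infty$; this is precisely the Yamada--Watanabe-type input that lets Theorem~\ref{th:moment stability estimate} absorb the local-time terms produced by the approximate It\^{o} formula for $|\null_{i}U_{t}'X_{t}|$, once one localises along $\tau_{n}:=\inf\{t\ge t_{0}:|X_{t}|\ge n\}$.

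For the drift, I would combine~\eqref{Condition 1}, which provides $\mathrm{sgn}(\null_{i}U'x)\,\null_{i}\dot U'x\le c_{0}\zeta_{i}|x|_{U}$, with~\eqref{Condition 14} to obtain, for each $i\in\{1,\dots,m\}$,
\begin{equation*}
\mathrm{sgn}(\null_{i}U_{t}'X_{t})\big(\null_{i}\dot U_{t}'X_{t}+\null_{i}U_{t}'\B_{t}(X_{t},P_{X_{t}})\big)\le c_{0}\zeta_{i}(t)|X_{t}|_{U_{t}}+\kappa^{(i)}_{t}+\sum_{k=1}^{l}\Big(\sum_{j=1}^{m}\null_{k}\upsilon^{(i,j)}_{t}\,|\null_{j}U_{t}'X_{t}|^{\alpha_{k}}+\null_{k}\chi^{(i)}_{t}\,\vartheta(P_{X_{t}},\delta_{0})^{\beta_{k}}\Big)
\end{equation*}
a.e.\ on $[t_{0},\infty)$ a.s. Summing over $i$ (so that $\sum_{i}\zeta_{i}=\zeta_{0}$, $\sum_{i}\kappa^{(i)}=\kappa^{(0)}$ and $\sum_{i}\null_{k}\chi^{(i)}=\null_{k}\chi^{(0)}$ in the notation of~\eqref{eq:coefficients definition}), taking expectations, estimating $E\big[(\sum_{i}\null_{k}\upsilon^{(i,j)}_{t})|\null_{j}U_{t}'X_{t}|^{\alpha_{k}}\big]\le[\sum_{i}\null_{k}\upsilon^{(i,j)}_{t}]_{q_{\alpha_{k}}}(1-\alpha_{k}+\alpha_{k}E[|\null_{j}U_{t}'X_{t}|])$ by~\eqref{eq:essential inequality} with $\beta=0$, and using that $\vartheta(P_{X_{t}},\delta_{0})$ is deterministic with $\vartheta(P_{X_{t}},\delta_{0})\le c_{\mathscr{P}}E[|X_{t}|]$ by~\eqref{eq:specific domination condition} together with $E[|X_{t}|]^{\beta_{k}}\le1-\beta_{k}+\beta_{k}E[|X_{t}|_{U_{t}}]$, the drift estimate aggregates into $f_{\mathscr{P}}(t)\,E[|X_{t}|_{U_{t}}]+E[\kappa^{(0)}_{t}]+g_{\mathscr{P}}(t)$ with $f_{\mathscr{P}},g_{\mathscr{P}}$ exactly as in~\eqref{eq:special moment exponential coefficient} and~\eqref{eq:special moment drift coefficient}; the maximum over $j$ in $f_{\mathscr{P}}$ appears because $\sum_{j}\big(\sum_{k}\alpha_{k}[\sum_{i}\null_{k}\upsilon^{(i,j)}]_{q_{\alpha_{k}}}\big)E[|\null_{j}U'X|]\le\max_{j}(\cdots)\,E[|X|_{U}]$.

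With these two ingredients, Theorem~\ref{th:moment stability estimate} yields $E[|X_{t}|_{U_{t}}]\le E[|X_{t_{0}}|_{U_{t_{0}}}]+\int_{t_{0}}^{t}f_{\mathscr{P}}(s)E[|X_{s}|_{U_{s}}]\,ds+\int_{t_{0}}^{t}(E[\kappa^{(0)}_{s}]+g_{\mathscr{P}}(s))\,ds$, all integrals being finite thanks to the local integrability of $\sum_{k}E[\null_{k}\chi^{(0)}]\vartheta(P_{X},\delta_{0})^{\beta_{k}}$ and the localisation; Gronwall's lemma in variation-of-constants form then gives~\eqref{eq:moment growth estimate} and, in particular, local boundedness of $E[|X|]$. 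The remaining claims are purely analytic: if $f_{\mathscr{P}}^{+}$ is integrable then $\exp(\int_{t_{0}}^{t}f_{\mathscr{P}})\le\exp(\int_{t_{0}}^{\infty}f_{\mathscr{P}}^{+})$ for all $t$, so integrability of $E[\kappa^{(0)}]$ and $g_{\mathscr{P}}$ bounds $\sup_{t}E[|X_{t}|]$; and if in addition $\int_{t_{0}}^{\infty}f_{\mathscr{P}}^{-}=\infty$, then $\exp(\int_{t_{0}}^{t}f_{\mathscr{P}})\to0$ and splitting $\int_{t_{0}}^{t}\exp(\int_{s}^{t}f_{\mathscr{P}})(E[\kappa^{(0)}_{s}]+g_{\mathscr{P}}(s))\,ds$ at a large time $T$ shows it vanishes as $t\uparrow\infty$, whence $E[|X_{t}|]\to0$.

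I expect the main obstacle to be bookkeeping rather than anything deep: organising the triple sums over $i,j,k$ so that the emerging Gronwall coefficients coincide \emph{exactly} with~\eqref{eq:special moment exponential coefficient}--\eqref{eq:special moment drift coefficient}, in particular tracking that the diagonal entries $\null_{k}\upsilon^{(i,i)}$ (and hence $[\cdot]_{q_{\alpha_{k}}}$ when $\alpha_{k}=1$) may be negative and keeping the maximum over $j$ in the correct position, together with making the localisation along $(\tau_{n})_{n\in\N}$ rigorous — proving the estimate first on $[t_{0},t\wedge\tau_{n}]$, where~\eqref{Condition 12} is in force, and then letting $n\uparrow\infty$ by Fatou's lemma. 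Since the genuinely hard analytic step, the Yamada--Watanabe treatment of the diffusion via the divergence of the Osgood integral, is already carried out in Theorem~\ref{th:moment stability estimate}, nothing new is needed there.
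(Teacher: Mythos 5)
Your proposal is correct and follows essentially the same route as the paper: one observes that $X$ is a random It\^{o} process with drift $\B(X,P_{X})$ and diffusion $\Sigma(X)$, checks that~(\ref{Condition 1}),~(\ref{Condition 12}) and~(\ref{Condition 14}) supply hypotheses~(\ref{Assumption 1}),~(\ref{Assumption 4}) and~(\ref{Assumption 3}) with $V=U$ and $\theta_{k}=\vartheta(P_{X},\delta_{0})$, and then invokes Theorem~\ref{th:moment stability estimate}. The only cosmetic difference is that you re-derive the theorem's internal Young/H\"older bookkeeping and Gronwall step rather than just citing its conclusion~\eqref{eq:moment stability estimate}, which already is the variation-of-constants form~\eqref{eq:moment growth estimate}.
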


\begin{Example}
Let $\B$ satisfy the representation~\eqref{eq:sum of functions} of Example~\ref{ex:class of drift maps 2}. Then~\eqref{co:13} is implied by the following condition:
\begin{enumerate}[(1)]
\item There are $\alpha_{0},\beta_{0}\in ]0,1]$, $\alpha\in ]0,1]^{l}$, $\tilde{\upsilon}\in\R^{m\times l}$ and $\R_{+}^{m}$-valued progressively measurable processes $\kappa$, $\overline{\upsilon}$ and $\chi$ with locally integrable paths such that 
\begin{equation*}
\mathrm{sgn}(v)f_{i,k}(v) \leq  \tilde{\upsilon}_{i,k}|v|^{\alpha_{k}}\quad\text{and}\quad |G^{(i)}(x,\mu)| \leq \kappa^{(i)} + \overline{\upsilon}^{(i)}|x|_{1}^{\alpha_{0}} + \chi^{(i)}\vartheta(\mu,\delta_{0})^{\beta_{0}}
\end{equation*}
for all $v\in\R$, $(x,\mu)\in\R^{m}\times\mathcal{P}$, $i\in\{1,\dots,m\}$ and $k\in\{1,\dots,l\}$. Moreover, $E[|\kappa|_{1}]$, $\sum_{k=1}^{l}\sum_{j=1}^{m}[\eta^{(j,k)}\tilde{\upsilon}_{j,k}]_{\frac{1}{1-\alpha_{k}}}$, $E[|\overline{\upsilon}|_{1}]$ and $E[|\chi|_{1}]$ are locally integrable.
\end{enumerate}
Certainly, we may choose $\tilde{\upsilon} = 0$ in~(1) whenever $f_{i,k}\geq 0$ on $]-\infty,0[$ and $f_{i,k}\leq 0$ on $]0,\infty[$ for all $i\in\{1,\dots,m\}$ and $k\in\{1,\dots,l\}$. Moreover, if~\eqref{co:11} holds for $\Sigma$ and~(1) is satisfied, then two facts follow from Lemma~\ref{le:moment growth estimate}:
\begin{enumerate}[(1)]
\setcounter{enumi}{1}
\item For any solution $X$ to~\eqref{eq:McKean-Vlasov} for which $E[|X_{t_{0}}|_{1}] < \infty$ and $E[|\lambda|_{1}]\vartheta(\mathcal{L}(X),\delta_{0})^{\beta_{0}}$ is locally integrable the bound~\eqref{eq:moment growth estimate} holds for
\begin{equation*}
g_{1} = \max_{j = 1,\dots,m} \underline{\upsilon}_{j} + \alpha_{0}\big[|\overline{\upsilon}|_{1}\big]_{\frac{1}{1-\alpha_{0}}} + \beta_{0}E\big[|\chi|_{1}\big]
\end{equation*}
and $h_{1} = (\sum_{k=1}^{l}(1-\alpha_{k})\sum_{j=1}^{m}[\eta^{(j,k)}\tilde{\upsilon}_{j,k}]_{\frac{1}{1-\alpha_{k}}}) + (1-\alpha_{0})m[|\overline{\upsilon}|_{1}]_{\frac{1}{1-\alpha_{0}}} + (1-\beta_{0})E[|\chi|_{1}]$, where the measurable map $\underline{\upsilon}:[t_{0},\infty[\rightarrow ]-\infty,\infty]^{m}$ is defined by
\begin{equation*}
\underline{\upsilon}_{j}(s) := \zeta_{j}(s) + \sum_{k=1}^{l}\alpha_{k}\big[\eta_{s}^{(j,k)}\tilde{\upsilon}_{j,k}\big]_{\frac{1}{1-\alpha_{k}}}.
\end{equation*}

\item In particular, $E[|X|_{1}]$ is bounded whenever $\underline{\upsilon}_{1}^{+},\dots,\underline{\upsilon}_{m}^{+}$, $[|\overline{\upsilon}|_{1}]_{\frac{1}{1-\alpha_{0}}}$ and $E[|\chi|_{1}]$ are integrable. If in addition
\begin{equation*}
\int_{t_{0}}^{\infty}\min_{j = 1,\dots,m}\underline{\upsilon}_{j}^{-}(s)\,ds = \infty,\quad\text{then}\quad \lim_{t\uparrow\infty} E\big[|X_{t}|_{1}\big] = 0.
\end{equation*}
\end{enumerate}
\end{Example}

\subsection{Strong solutions with locally bounded absolute moment functions}

In what follows, let $b:[t_{0},\infty[\times\R^{m}\times\mathcal{P}\rightarrow\R^{m}$ and $\sigma:[t_{0},\infty[\times\R^{m}\rightarrow\R^{m\times d}$ be Borel measurable and $\xi:\Omega\rightarrow\R^{m}$ be $\mathcal{F}_{t_{0}}$-measurable. We derive a strong solution $X$ to~\eqref{eq:deterministic McKean-Vlasov} such that $X_{t_{0}} = \xi$ a.s.~and the measurable absolute moment function
\begin{equation*}
[t_{0},\infty[\rightarrow [0,\infty],\quad t\mapsto E[|X_{t}|]
\end{equation*}
is finite and locally bounded but not necessarily continuous, by combining the preceding results with a fixed-point approach. Thereby, neither $b(s,\cdot,\mu)$ nor $\sigma(s,\cdot)$ need to be locally Lipschitz continuous for any $(s,\mu)\in [t_{0},\infty[\times\mathcal{P}$.

For a Borel measurable map $\mu:[t_{0},\infty[\rightarrow\mathcal{P}$ we define an $\R^{m}$-valued measurable map $b_{\mu}$ on $[t_{0},\infty[\times\R^{m}$ by $b_{\mu}(s,x):= b(s,x,\mu(s))$ and show that the induced SDE
\begin{equation}\label{eq:mu-SDE}
dX_{t} = b_{\mu}(t,X_{t})\,dt + \sigma(t,X_{t})\,dW_{t}\quad\text{for $t\geq t_{0}$}
\end{equation}
admits a solution. To this end, $\sigma$ should vanish at the origin of $\R^{m}$ at any time and satisfy an Osgood continuity condition on compact sets:
\begin{enumerate}[label=(D.\arabic*), ref=D.\arabic*, leftmargin=\widthof{(D.1)} + \labelsep]
\item\label{det.co:1} $\sigma(\cdot,0) = 0$ and for any $n\in\N$ there are an increasing $\hat{\rho}_{n}\in C(\R_{+})$ and a measurable locally bounded function $\hat{\eta}_{n}:[t_{0},\infty[\rightarrow\R_{+}$ such that
\begin{equation*}
\text{$\hat{\rho}_{n} > 0$ on $]0,\infty[$,}\quad \int_{0}^{1}\!\frac{1}{\hat{\rho}_{n}(v)^{2}}\,dv = \infty\quad\text{and}\quad |e_{i}'(\sigma(\cdot,x) - \sigma(\cdot,\tilde{x}))| \leq \hat{\eta}_{n}\hat{\rho}_{n}(|x_{i} - \tilde{x}_{i}|)
\end{equation*}
for all $x,\tilde{x}\in\R^{m}$ with $|x|\vee|\tilde{x}|\leq n$ for all $i\in\{1,\dots,m\}$.
\end{enumerate}

\begin{Remark}
This condition still allows for Example~\ref{ex:sum of power functions} when $\Sigma=\sigma$, $\varphi(0) = 0$, $\zeta = 0$ and $\eta = \hat{\eta}$ for some measurable locally bounded map $\hat{\eta}:[t_{0},\infty[\rightarrow\R^{d\times l}$.
\end{Remark}

We introduce a partial growth condition and a continuity and boundedness condition as well as a partial Osgood condition on compact sets on $b$:
\begin{enumerate}[label=(D.\arabic*), ref=D.\arabic*, leftmargin=\widthof{(D.3)} + \labelsep]
\setcounter{enumi}{1}
\item\label{det.co:2} There are $\phi,\varphi\in C(\R_{+})$ that are positive on $]0,\infty[$ and vanish at $0$ and measurable locally integrable maps $\kappa,\upsilon,\chi:[t_{0},\infty[\rightarrow\R_{+}^{m}$ such that
\begin{equation*}
\mathrm{sgn}(x_{i})b_{i}(\cdot,x,\mu) \leq \kappa_{i} + \upsilon_{i}\phi(|x|_{1}) + \chi_{i}\varphi(\vartheta(\mu,\delta_{0}))
\end{equation*}
for all $(x,\mu)\in\R^{m}\times\mathcal{P}$ and $i\in\{1,\dots,m\}$. Further, $\phi$ is concave, $\varphi$ is increasing and $\int_{1}^{\infty}\!\frac{1}{\phi(v)}\,dv = \infty$.

\item\label{det.co:3} $b(s,\cdot,\mu)$ is continuous for all $(s,\mu)\in [t_{0},\infty[\times\mathcal{P}$ and for any $n\in\mathbb{N}$ there is $c_{n} \geq 0$ such that
\begin{equation*}
|b(s,x,\mu)|\leq c_{n}
\end{equation*}
for each $(s,x,\mu)\in [t_{0},t_{0} + n]\times\R^{m}\times\mathcal{P}$ with $|x|\leq n$ and $\vartheta(\mu,\delta_{0})\leq n$.

\item\label{det.co:4} For each $n\in\N$ there are a concave $\rho_{n}\in C(\R_{+})$ that is positive on $]0,\infty[$ and a measurable locally integrable function $\eta_{n}:[t_{0},\infty[\rightarrow\R_{+}$ satisfying
\begin{equation*}
\int_{0}^{1}\!\frac{1}{\rho_{n}(v)}\,dv = \infty\quad\text{and}\quad\mathrm{sgn}(x_{i}-\tilde{x}_{i})\big(b_{i}(\cdot,x,\mu) - b_{i}(\cdot,\tilde{x},\mu)\big) \leq \eta_{n}\rho_{n}(|x-\tilde{x}|_{1})
\end{equation*}
for all $x,\tilde{x}\in\R^{m}$ with $|x|\vee|\tilde{x}|\leq n$, $\mu\in\mathcal{P}$ and $i\in\{1,\dots,m\}$.
\end{enumerate}

Let $B_{b,loc}(\mathcal{P})$ be the set of all Borel measurable maps $\mu:[t_{0},\infty[\rightarrow\mathcal{P}$ for which $\vartheta(\mu,\delta_{0})$ is locally bounded. By means of a local weak existence result from~\cite{IkeWat81} and Corollary~\ref{co:pathwise uniqueness}, we settle questions of uniqueness and existence of solutions to~\eqref{eq:mu-SDE}.

\begin{Proposition}\label{pr:mu-SDE}
For $\mu\in B_{b,loc}(\mathcal{P})$ the following three assertions hold:
\begin{enumerate}[(i)]
\item Let~\eqref{det.co:1} and~\eqref{det.co:4} be valid. Then pathwise uniqueness for the SDE~\eqref{eq:mu-SDE} follows.

\item Let~\eqref{det.co:1}-\eqref{det.co:3} hold. Then~\eqref{eq:mu-SDE} admits a weak solution $X$ with $\mathcal{L}(X_{t_{0}}) = \mathcal{L}(\xi)$. Further, if $\xi$ is integrable, then $\vartheta_{1}(\mathcal{L}(X),\delta_{0})$ is locally bounded.

\item If~\eqref{det.co:1}-\eqref{det.co:4} are satisfied, then there is a unique strong solution $X^{\xi,\mu}$ to~\eqref{eq:mu-SDE} such that $X_{t_{0}}^{\xi\,\mu} = \xi$ a.s.
\end{enumerate}
\end{Proposition}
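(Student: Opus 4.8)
The plan is to regard \eqref{eq:mu-SDE}, for a fixed $\mu\in B_{b,loc}(\mathscr{P})$, as a classical It\^o SDE covered by the framework of~\eqref{eq:McKean-Vlasov} with coefficients that do not depend on the measure variable, and then to combine the pathwise uniqueness criterion of Corollary~\ref{co:pathwise uniqueness} with a local weak existence theorem from~\cite{IkedaWatanabe81} via the Yamada--Watanabe principle~\cite{YamadaWatanabe71}. To set this up, observe that $b_{\mu}$ is Borel measurable on $[t_{0},\infty)\times\R^{m}$, being the composition of $b$ with $(s,x)\mapsto (s,x,\mu(s))$, that $\sigma$ is Borel measurable, and that the deterministic orthonormal map $U:=u$, together with $c_{0}$ and $\zeta$ chosen as in Section~\ref{se:3.4}, satisfies~\eqref{Condition 1} by Example~\ref{ex:orthonormal maps}. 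Writing~\eqref{eq:mu-SDE} as~\eqref{eq:McKean-Vlasov} with drift $\B_{s}(x,\nu):=b_{\mu}(s,x)$ (independent of $\nu$) and diffusion $\Sigma_{s}(x):=\sigma(s,x)$, the estimate in~\eqref{Deterministic Condition 1} is precisely~\eqref{Condition 2} for the deterministic choice $\null_{n}\hat{\eta}:=\hat{\eta}_{n}\in\mathscr{L}_{loc}^{2}(\R_{+})$, while specialising the estimate in~\eqref{Deterministic Condition 4} to $\mu=\mu(s)$ yields~\eqref{Condition 5} with $\hat{\B}=b_{\mu}$. Assertion~(i) is then immediate: under~\eqref{Deterministic Condition 1} and~\eqref{Deterministic Condition 4} these translations provide~\eqref{Condition 1},~\eqref{Condition 2} and~\eqref{Condition 5}, and $\int_{0}^{1}((c_{0}v)\vee\rho_{n}(v))^{-1}\,dv=\infty$ holds by assumption, so Corollary~\ref{co:pathwise uniqueness}(ii) applies.

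For~(ii), the first step is to obtain a weak solution up to an explosion time. By~\eqref{Deterministic Condition 3}, $b_{\mu}(s,\cdot)$ and $\sigma(s,\cdot)$ are continuous for a.e.~$s$; and since $\mu\in B_{b,loc}(\mathscr{P})$, for each $n\in\N$ there is $N\geq n$ with $\vartheta(\mu(s),\delta_{0})\leq N$ on $[t_{0},t_{0}+n]$, so that $|b_{\mu}(s,x)|\vee|\sigma(s,x)|\leq c_{N}$ for $s\in [t_{0},t_{0}+n]$ and $|x|\leq n$. This is the hypothesis of the local weak existence result of~\cite{IkedaWatanabe81}, which produces a weak solution $X$ of~\eqref{eq:mu-SDE} with $\mathscr{L}(X_{t_{0}})=\mathscr{L}(\xi)$, a priori only up to an explosion time. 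Next,~\eqref{Deterministic Condition 1} yields~\eqref{Condition 2} together with $\sigma(\cdot,0)=0$ a.e., hence also~\eqref{Condition 12} for $\Sigma=\sigma$ (remark following~\eqref{Condition 12}), and~\eqref{Deterministic Condition 2} yields~\eqref{Condition 13} with $\chi=0$ once the term $\chi_{i}\varphi(\vartheta(\mu(s),\delta_{0}))$ is absorbed into $\kappa_{i}$, which is legitimate because $s\mapsto\varphi(\vartheta(\mu(s),\delta_{0}))$ is locally bounded; moreover the Osgood-at-infinity condition in~\eqref{Deterministic Condition 2} forces $\Phi_{\varphi_{0}}(\infty)=\infty$. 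When $\xi$ is integrable, applying Lemma~\ref{le:abstract growth estimate} along a localising sequence of stopping times and passing to the limit simultaneously rules out explosion and shows that $t\mapsto E[|X_{t}|]$ is locally bounded; for general $\xi$, non-explosion follows from a standard argument based on the same Osgood-at-infinity condition. In either case $X$ is a weak solution of~\eqref{eq:mu-SDE} on all of $[t_{0},\infty)$.

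For~(iii), combine~(i) and~(ii): pathwise uniqueness and the existence of a weak solution hold for~\eqref{eq:mu-SDE}, so the Yamada--Watanabe theorem yields a measurable functional $F$ such that $X^{\xi,\mu}:=F(\xi,W)$, defined on the underlying filtered probability space and adapted to the augmented right-continuous filtration generated by $\xi$ and $W$, is a strong solution of~\eqref{eq:mu-SDE} with $X^{\xi,\mu}_{t_{0}}=\xi$ a.s.; its uniqueness among strong solutions with this initial value is then a consequence of~(i). When $\xi$ is integrable, pathwise uniqueness forces $X^{\xi,\mu}$ to have the same law as the weak solution constructed in~(ii), so its moment function is again locally bounded.

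The main obstacle is the non-explosion and local moment boundedness step in~(ii): Lemma~\ref{le:abstract growth estimate} is stated for solutions defined on all of $[t_{0},\infty)$, so transferring its conclusion to the Ikeda--Watanabe solution-up-to-explosion requires a careful localisation. One should stop the process (or truncate the coefficients outside balls) and control the constants in the a priori bound uniformly in the localisation level, using the monotonicity of $\phi$, $\varphi$ and $\rho_{n}$ together with the fact that the radial retractions $\varphi_{n}$ are $1$-Lipschitz and satisfy $|u_{i}'\varphi_{n}(x)|\leq |u_{i}'x|$. The remaining verifications --- that~\eqref{Deterministic Condition 1} and~\eqref{Deterministic Condition 2} entail~\eqref{Condition 12},~\eqref{Condition 13} and $\Phi_{\varphi_{0}}(\infty)=\infty$ --- are routine.
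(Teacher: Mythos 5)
Your proposal follows essentially the same route as the paper: (i) is Corollary~\ref{co:pathwise uniqueness} after translating~\eqref{Deterministic Condition 1} and~\eqref{Deterministic Condition 4} into~\eqref{Condition 2} and~\eqref{Condition 5}; (ii) combines the Ikeda--Watanabe local weak existence theorem with the a priori bound of Lemma~\ref{le:abstract growth estimate} (absorbing $\chi_{i}\varphi(\vartheta(\mu(s),\delta_{0}))$ into $\kappa_{i}$, exactly as the paper does) to rule out explosion; and (iii) is the Yamada--Watanabe principle. The localisation you flag as the main obstacle is handled in the paper by viewing the local solution as $\R^{m}\cup\{\infty\}$-valued, noting that each stopped process $\tilde X^{\tau_n}$ solves the equation with coefficients cut off by $\mathbbm{1}_{\{\tau_n>s\}}$, applying Lemma~\ref{le:abstract growth estimate} to each, and using Fatou --- the same scheme you sketch.

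The one soft spot is your treatment of non-integrable $\xi$ in~(ii), where you assert that non-explosion "follows from a standard argument based on the same Osgood-at-infinity condition.'' As stated this is circular: the Osgood growth condition enters only through the first-moment estimate of Lemma~\ref{le:abstract growth estimate}, which requires $E[|X_{t_0}|]<\infty$, so it cannot directly prevent explosion for general $\xi$. The paper's actual device is to first establish existence and non-explosion for essentially bounded (hence integrable) initial data --- in particular for every deterministic starting point $x\in\R^{m}$ --- and then invoke Remark~2.1 in Chapter~IV of~\cite{IkedaWatanabe81}, which produces a weak solution for an arbitrary initial law by mixing over the solutions started from deterministic points. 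You should either make that conditioning/mixing step explicit or restrict the moment argument to the bounded case and cite the mixing result; with that repair the proof is complete.
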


In the sequel, let $B_{b,loc}(\mathcal{P}_{1}(\R^{m}))$ denote the convex space of all Borel measurable maps $\mu:[t_{0},\infty[\rightarrow\mathcal{P}_{1}(\R^{m})$ for which $\int_{\R^{m}}|x|\,\mu(dx)$ is locally bounded, equipped with the topology of local uniform convergence. 

That means, a sequence $(\mu_{n})_{n\in\N}$ in this space converges locally uniformly to some $\mu\in B_{b,loc}(\mathcal{P}_{1}(\R^{m}))$ if $\lim_{n\uparrow\infty} \sup_{s\in [t_{0},t]} \vartheta_{1}(\mu_{n},\mu)(s) = 0$ for every $t\geq t_{0}$. Then, as the Wasserstein metric $\vartheta_{1}$ is complete, $B_{b,loc}(\mathcal{P}_{1}(\R^{m}))$ is completely metrisable.

To construct a solution to~\eqref{eq:deterministic McKean-Vlasov} from a local uniform limit of a Picard iteration in $B_{b,loc}(\mathcal{P}_{1}(\R^{m}))$, we strengthen the partial Osgood condition~\eqref{det.co:4} on compact sets by a partial Lipschitz condition on $b$:
\begin{enumerate}[label=(D.\arabic*), ref=D.\arabic*, leftmargin=\widthof{(D.5)} + \labelsep]
\setcounter{enumi}{4}
\item\label{det.co:5} There are measurable locally integrable maps $\eta$ and $\lambda$ on $[t_{0},\infty[$ with values in $\R^{m\times m}$ and $\R_{+}^{m}$, respectively, such that
\begin{equation*}
\mathrm{sgn}(x_{i} - \tilde{x}_{i})\big(b_{i}(\cdot,x,\mu) - b_{i}(\cdot,\tilde{x},\tilde{\mu})\big) \leq \bigg(\sum_{j=1}^{m}\eta_{i,j}|x_{j} - \tilde{x}_{j}|\bigg) + \lambda_{i}\vartheta(\mu,\tilde{\mu})
\end{equation*}
for any $x,\tilde{x}\in\R^{m}$, $\mu,\tilde{\mu}\in\mathcal{P}$ and $i\in\{1,\dots,m\}$. Moreover, we have $\eta_{i,j}\geq 0$ for all $i,j\in\{1,\dots,m\}$ with $i\neq j$.
\end{enumerate}

If~\eqref{det.co:5} is satisfied, which implies~\eqref{co:6} for $\B=b$, then we use the formula in~\eqref{eq:special stability exponential coefficient 2} when $\lambda = 0$ for the definition of $\gamma_{1,0}:[t_{0},\infty[\rightarrow\R$. Namely, we set 
\begin{equation*}
\gamma_{1,0} := \max_{j = 1,\dots,m} \eta_{1,j} + \cdots + \eta_{m,j}.
\end{equation*}

Further, if the following partial affine growth condition for $b$ holds, which is stronger than~\eqref{det.co:2}, then we can deduce an estimate for the Picard iteration.
\begin{enumerate}[label=(D.\arabic*), ref=D.\arabic*, leftmargin=\widthof{(D.6)} + \labelsep]
\setcounter{enumi}{5}
\item\label{det.co:6} There are $l\in\N$, $\alpha,\beta\in ]0,1]^{l}$ and measurable locally integrable maps $\kappa$, $\upsilon$ and $\chi$ on $[t_{0},\infty[$ with values in $\R_{+}^{m}$, $\R^{m\times m\times l}$ and $\R_{+}^{m\times l}$, respectively, such that
\begin{equation*}
\mathrm{sgn}(x_{i})b_{i}(\cdot,x,\mu) \leq \kappa_{i} + \sum_{k=1}^{l}\bigg(\sum_{j=1}^{m}\upsilon_{i,j,k}|x_{j}|^{\alpha_{k}}\bigg) + \chi_{i,k}\vartheta(\mu,\delta_{0})^{\beta_{k}}
\end{equation*}
for any $(x,\mu)\in\R^{m}\times\mathcal{P}$ and $i\in\{1,\dots,m\}$. In addition, it holds that $\upsilon_{i,j,k}\geq 0$ for all $i,j\in\{1,\dots,m\}$ with $i\neq j$ and $k\in\{1,\dots,l\}$.
\end{enumerate}

Since~\eqref{det.co:6} entails~\eqref{co:13} for $\B = b$, we may use the formulas~\eqref{eq:special moment exponential coefficient} and~\eqref{eq:special moment drift coefficient} for the functions $g_{1}$ and $h_{1}$ when all appearing coefficients are deterministic. Namely,
\begin{equation*}
g_{1} = \max_{j = 1,\dots,m} \sum_{k=1}^{l}\alpha_{k}\bigg(\bigg(\sum_{i=1}^{m}\upsilon_{i,j,k}\bigg)^{+} - \bigg(\sum_{i=1}^{m}\upsilon_{i,j,k}\bigg)^{-}\mathbbm{1}_{\{1\}}(\alpha_{k})\bigg) + \beta_{k}\sum_{i=1}^{m}\chi_{i,k}
\end{equation*}
and
\begin{equation*}
h_{1} = \sum_{k=1}^{l}(1-\alpha_{k})\bigg(\sum_{j=1}^{m}\bigg(\sum_{i=1}^{m}\upsilon_{i,j,k}\bigg)^{+}\bigg) + (1-\beta_{k})\sum_{i=1}^{m}\chi_{i,k}.
\end{equation*}
Based on these considerations, we obtain a \emph{strong existence result with explicit error and growth estimates}.

\begin{Theorem}\label{th:strong existence}
Let~\eqref{det.co:1}-\eqref{det.co:3} and~\eqref{det.co:5} hold, $\mathcal{P}_{1}(\R^{m})\subseteq\mathcal{P}$, $\mu_{0}\in B_{b,loc}(\mathcal{P})$ and $E[|\xi|_{1}] < \infty$. Further, define $\Theta:[t_{0},\infty[\times\mathcal{P}\times\mathcal{P}\rightarrow\R_{+}$ by $\Theta(\cdot,\mu,\tilde{\mu}):= |\lambda|_{1}\vartheta(\mu,\tilde{\mu})$.
\begin{enumerate}[(i)]
\item Then pathwise uniqueness for~\eqref{eq:deterministic McKean-Vlasov} relative to $\Theta$ holds and there exists a unique strong solution $X^{\xi}$ to~\eqref{eq:deterministic McKean-Vlasov} such that $X_{t_{0}}^{\xi} = \xi$ a.s.~and $E[|X^{\xi}|]$ is locally bounded.

\item The map $[t_{0},\infty[\rightarrow\mathcal{P}_{1}(\R^{m})$, $t\mapsto\mathcal{L}(X_{t}^{\xi})$ is the local uniform limit of the sequence $(\mu_{n})_{n\in\mathbb{N}}$ in $B_{b,loc}(\mathcal{P}_{1}(\R^{m}))$ recursively given by $\mu_{n} := \mathcal{L}(X^{\xi,\mu_{n-1}})$ and
\begin{equation}\label{eq:error estimate}
\sup_{s\in [t_{0},t]}\vartheta_{1}(\mu_{n}(s),\mathcal{L}(X_{s}^{\xi})) \leq \Delta(t) \sum_{i=n}^{\infty} \frac{1}{i!}\bigg(\int_{t_{0}}^{t}\!e^{\int_{s}^{t}\!\gamma_{1,0}^{+}(\tilde{s})\,d\tilde{s}}|\lambda(s)|_{1}\,ds\bigg)^{i}
\end{equation}
for all $t\geq t_{0}$ with $\Delta(t):=\sup_{s\in [t_{0},t]}\vartheta(\mathcal{L}(X^{\xi,\mu_{0}}),\mu_{0})(s)$.

\item If in fact~\eqref{det.co:6} holds, then $(\mu_{n})_{n\in\mathbb{N}}$ is a sequence in the closed convex space $M$ of all $\mu\in B_{b,loc}(\mathcal{P}_{1}(\R^{m}))$ satisfying
\begin{equation}\label{eq:growth estimate}
\vartheta_{1}(\mu(t),\delta_{0})\leq e^{\int_{t_{0}}^{t}\!g_{1}(s)\,ds} E\big[|\xi|_{1}\big] + \int_{t_{0}}^{t}\!e^{\int_{s}^{t}\!g_{1}(\tilde{s})\,d\tilde{s}}(|\kappa|_{1} + h_{1})(s)\,ds
\end{equation}
for each $t\geq t_{0}$ as soon as $\mu_{0}\in M$.
\end{enumerate}
\end{Theorem}

\begin{Remark}
For $\mu_{0} = \delta_{0}$ it holds that $\Delta(t) \leq \sup_{s\in [t_{0},t]} E\big[|X_{s}^{\xi,\delta_{0}}|\big]$ for any $t\geq t_{0}$. If instead $\mu_{0} = \mathcal{L}(X^{\xi})$, then $\Delta = 0$ and $\mu_{n} = \mu_{0}$ for all $n\in\N$.
\end{Remark}

Let us conclude with a deterministic variant of Example~\ref{ex:class of drift maps 2}.

\begin{Example}
Let $l\in\N$, $\eta:[t_{0},\infty[\rightarrow\R_{+}^{m\times l}$ be measurable and locally integrable, $f:\R\rightarrow\R^{m\times l}$ be continuous and $\hat{g}:[t_{0},\infty[\times\R^{m}\times\mathcal{P}\rightarrow\R^{m}$ be Borel measurable such that
\begin{equation*}
b_{i}(s,x,\mu) = \eta_{i,1}(s)f_{i,1}(x_{i}) + \cdots + \eta_{i,l}(s)f_{i,l}(x_{i}) + \hat{g}_{i}(s,x,\mu)
\end{equation*}
for any $(s,x,\mu)\in [t_{0},\infty[\times\R^{m}\times\mathcal{P}$ and $i\in\{1,\dots,m\}$. Then the following three assertions hold:
\begin{enumerate}[(1)]
\item Suppose that $\tilde{\eta}\in\R^{m\times l}$ and $\overline{\eta},\lambda:[t_{0},\infty[\rightarrow\R_{+}^{m}$ are measurable locally integrable maps such that $\mathrm{sgn}(v-\tilde{v})(f_{i,k}(v) - f_{i,k}(\tilde{v})) \leq \tilde{\eta}_{i,k}|v-\tilde{v}|$ and
\begin{align*}
|\hat{g}_{i}(\cdot,x,\mu) - \hat{g}_{i}(\cdot,\tilde{x},\tilde{\mu})| &\leq \overline{\eta}_{i}|x-\tilde{x}|_{1} + \lambda_{i}\vartheta(\mu,\tilde{\mu})
\end{align*}
for all $v,\tilde{v}\in\R$, $x,\tilde{x}\in\R^{m}$, $\mu,\tilde{\mu}\in\mathcal{P}$, $i\in\{1,\dots,m\}$ and $k\in\{1,\dots,l\}$. Then the partial Lipschitz condition~\eqref{det.co:5} for $b$ is valid.

\item Let $\tilde{\upsilon}\in\R^{m\times l}$ and $\kappa,\overline{\upsilon},\chi:[t_{0},\infty[\rightarrow\R_{+}^{m}$ be measurable locally integrable maps satisfying $\mathrm{sgn}(v)f_{i,k}(v) \leq \tilde{\upsilon}_{i,k}|v|$ and
\begin{equation*}
|\hat{g}_{i}(\cdot,x,\mu)| \leq \kappa_{i} + \overline{\upsilon}_{i}|x|_{1} + \chi_{i}\vartheta(\mu,\delta_{0})
\end{equation*}
for any $v\in\R$, $(x,\mu)\in\R^{m}\times\mathcal{P}$, $i\in\{1,\dots,m\}$ and $k\in\{1,\dots,l\}$. Then the partial affine growth condition~\eqref{det.co:6} for $b$ is satisfied.

\item Let the conditions in~(1) and~(2) hold and the maps $\eta$, $\kappa$, $\overline{\upsilon}$ and $\chi$ be actually locally bounded. Then the continuity and boundedness condition~\eqref{det.co:3} for $b$ follows.
\end{enumerate}

For example, if $f_{i,k}$ is decreasing, $f_{i,k}\geq 0$ on $]-\infty,0[$ and $f_{i,k}\leq 0$ on $]0,\infty[$ for all $i\in\{1,\dots,m\}$ and $k\in\{1,\dots,l\}$, then $\tilde{\eta} = \tilde{\upsilon} = 0$ is feasible in~(1) and~(2). More specifically, we may take
\begin{equation*}
f_{i,1}(v) = - v^{n_{i,1}},\dots, f_{i,l}(v) = - v^{n_{i,l}}
\end{equation*}
for all $v\in\R$ and $i\in\{1,\dots,m\}$ and some $n\in\N^{m\times l}$ with \emph{odd coordinates}. In general, if $\sigma$ satisfies~\eqref{det.co:1}, $\mathcal{P}_{1}(\R^{m})\subseteq\mathcal{P}$, $E[|\xi|_{1}] < \infty$ and the conditions in (1)-(3) hold, then all assertions of Theorem~\ref{th:strong existence} apply and the coefficients reduce to
\begin{equation*}
\gamma_{1,0} = \max_{j = 1,\dots,m} \underline{\eta}_{j} + |\overline{\eta}|_{1},\quad g_{1} = \max_{j = 1,\dots,m}\underline{\upsilon}_{j} + |\overline{\upsilon}|_{1} + |\chi|_{1}\quad\text{and}\quad h_{1} = 0
\end{equation*}
with the measurable locally bounded maps $\underline{\eta},\underline{\upsilon}:[t_{0},\infty[\rightarrow\R_{+}^{m}$ coordinatewise given by $\underline{\eta}_{j} := \sum_{k=1}^{l}\eta_{j,k}\tilde{\eta}_{j,k}$ and $\underline{\upsilon}_{j} := \sum_{k=1}^{l}\eta_{j,k}\tilde{\upsilon}_{j,k}$.
\end{Example}

\section{Moment and pathwise asymptotic estimations for random It{\^o} processes}\label{se:4}

\subsection{Auxiliary moment bounds}\label{se:4.1}

In the sequel, let $\B$ and $\Sigma$ be two progressively measurable processes with values in $\R^{m}$ and $\R^{m\times d}$, respectively, such that $\int_{t_{0}}^{\cdot}\!|\B_{s}| + |\Sigma_{s}|^{2}\,ds < \infty$. We will derive \emph{quantitative $L^{1}$-estimates} for an $\R^{m}$-valued adapted continuous process $Y$ satisfying
\begin{equation*}
Y = Y_{t_{0}} + \int_{t_{0}}^{\cdot}\!\B_{s}\,ds + \int_{t_{0}}^{\cdot}\!\Sigma_{s}\,dW_{s}\quad\text{a.s.,}
\end{equation*}
which we call a \emph{random It{\^o} process} with drift $\B$ and diffusion $\Sigma$. First, let us recall an approximation of the identity function on $\R_{+}$, used by Yamada and Watanabe~\cite{YamWat71} to prove pathwise uniqueness for SDEs.

For any $i\in\{1,\dots,m\}$ and each increasing $\hat{\rho}_{i}\in C(\R_{+})$ that is positive on $]0,\infty[$ and satisfies $\int_{0}^{1}\!\hat{\rho}_{i}(v)^{-2}\,dv = \infty$, there are a strictly decreasing zero sequence $(a_{i,n})_{n\in\N_{0}}$ in $]0,1]$ and an increasing sequence $(\psi_{i,n})_{n\in\N}$ of non-negative functions in $C^{2}(\R_{+})$ such that
\begin{equation*}
\psi_{i,n}'\in [0,1],\quad\psi_{i,n}' = \psi_{i,n}'\mathbbm{1}_{]0,a_{i,n-1}[} + \mathbbm{1}_{[a_{i,n-1},\infty[}\quad\text{and}\quad 0\leq \psi_{i,n}'' \leq \frac{2}{n}\hat{\rho}_{i}^{-2}\mathbbm{1}_{]a_{i,n},a_{i,n-1}[}
\end{equation*}
for any $n\in\N$ and hence, $\psi_{i,n}(0) = \psi_{i,n}'(0) = \psi_{i,n}''(0) = 0$. These conditions ensure that $\sup_{n\in\N} \psi_{i,n}(x) = x$ and $\lim_{n\uparrow\infty}\psi_{i,n}'(x) = 1$ for all $x > 0$, which we combine with an application of It{\^o's} formula.

\begin{Lemma}\label{le:Ito product rule}
Let $\psi\in C^{2}(\R_{+})$ satisfy $\psi'(0) = \psi''(0) = 0$ and $U$ be an $\R^{m}$-valued adapted locally absolutely continuous process. Then
\begin{equation*}
\begin{split}
u\psi(|U'Y|) &= u(t_{0})\psi(|U_{t_{0}}'Y_{t_{0}}|) + \int_{t_{0}}^{\cdot}\!\psi(|U_{s}'Y_{s}|)\,du(s)\\
&\quad + \int_{t_{0}}^{\cdot}\!u(s)\bigg(\psi'(|U_{s}'Y_{s}|)\mathrm{sgn}(U_{s}'Y_{s})\big(\dot{U}_{s}'Y_{s} + U_{s}'\B_{s}\big) + \frac{1}{2}\psi''(|U_{s}'Y_{s}|)|U_{s}'\Sigma_{s}|^{2}\bigg)\,ds\\
&\quad + \int_{t_{0}}^{\cdot}\!u(s)\psi'(|U_{s}'Y_{s}|)\mathrm{sgn}(U_{s}'Y_{s})U_{s}'\Sigma_{s}\,dW_{s}\quad\text{a.s.}
\end{split}
\end{equation*}
for any continuous function $u:[t_{0},\infty[\rightarrow\R$ that is locally of bounded variation.
\end{Lemma}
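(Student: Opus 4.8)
The plan is to recognise the left-hand side as a smooth function of a one-dimensional continuous semimartingale, apply It\^{o}'s formula to it, and then pass to the product with $u$ by an integration-by-parts argument.

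First I would observe that $Z:=U'Y$, with $Z_s=U_s'Y_s=\sum_{i=1}^{m}U_s^{(i)}Y_s^{(i)}$, is a continuous semimartingale. Each coordinate $Y^{(i)}$ is a continuous semimartingale with $dY_s^{(i)}=\hat{\B}_s^{(i)}\,ds+(\text{$i$-th row of }\hat{\Sigma}_s)\,dW_s$, while each $U^{(i)}$ is continuous and of locally bounded variation, with $dU_s^{(i)}=\dot{U}_s^{(i)}\,ds$ by local absolute continuity. Applying the integration-by-parts formula coordinatewise — the brackets $\langle U^{(i)},Y^{(i)}\rangle$ vanishing since $U^{(i)}$ has finite variation — and summing over $i$ would give
\[
dZ_s=\big(\dot{U}_s'Y_s+U_s'\hat{\B}_s\big)\,ds+U_s'\hat{\Sigma}_s\,dW_s,\qquad d\langle Z\rangle_s=|U_s\hat{\Sigma}_s|^2\,ds.
\]

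Next I would verify that $h:\R\to\R$, $h(x):=\psi(|x|)$, belongs to $C^2(\R)$ with $h'(x)=\psi'(|x|)\,\mathrm{sgn}(x)$ and $h''(x)=\psi''(|x|)$. For $x\neq 0$ this is the chain rule; the delicate point, and the only place where the hypothesis $\psi'(0)=\psi''(0)=0$ is used, is the origin: second-order Taylor expansions of $\psi$ and $\psi'$ at $0$ give $\psi(|x|)=\psi(0)+o(|x|)$ and $\psi'(|x|)=o(|x|)$ as $x\to 0$, which forces $h'(0)=0$ and $h''(0)=0$, and then $h''(x)=\psi''(|x|)\to\psi''(0)=h''(0)$ gives continuity of $h''$. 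Now It\^{o}'s formula applied to $h(Z_s)$, using $d\langle Z\rangle_s=|U_s\hat{\Sigma}_s|^2\,ds$, produces a semimartingale decomposition of $\psi(|U_s'Y_s|)$ whose drift contains $\psi'(|U_s'Y_s|)\mathrm{sgn}(U_s'Y_s)(\dot{U}_s'Y_s+U_s'\hat{\B}_s)+\frac{1}{2}\psi''(|U_s'Y_s|)|U_s\hat{\Sigma}_s|^2$ and whose martingale part is $\int\psi'(|U_s'Y_s|)\mathrm{sgn}(U_s'Y_s)U_s'\hat{\Sigma}_s\,dW_s$. Multiplying by $u$ and using that $u$ is continuous and locally of bounded variation — so that its covariation with $\psi(|U'Y|)$ vanishes — the product rule $d\big(u(s)\psi(|U_s'Y_s|)\big)=\psi(|U_s'Y_s|)\,du(s)+u(s)\,d\psi(|U_s'Y_s|)$ delivers exactly the asserted identity.

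Finally I would check that every integral in the statement makes sense on an arbitrary interval $[t_0,t]$: there $u$, $U$ and $Y$ are bounded by continuity, $\psi'\in[0,1]$, and $\psi''$ is bounded on the compact range of $s\mapsto|U_s'Y_s|$, while $\dot{U}\in\mathscr{L}_{loc}^{1}(\R^{m})$, $\hat{\B}$ is locally integrable and $\int_{t_0}^{t}|\hat{\Sigma}_s|^2\,ds<\infty$; this bounds the two Lebesgue integrals and shows the integrand of the Wiener integral is a.s.\ square-integrable on $[t_0,t]$, so the stochastic integral is genuinely defined. The only step I expect to require any care is the $C^2$-regularity of $x\mapsto\psi(|x|)$ at the origin, which is where the structural assumption on $\psi$ enters and which removes the need for an It\^{o}--Tanaka/local-time correction; everything else is bookkeeping once the semimartingale decomposition of $U'Y$ is in hand.
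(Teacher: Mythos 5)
Your proof is correct, and it reaches the identity by a slightly different decomposition than the paper. The paper applies It\^{o}'s formula in one shot to the $2m$-variable function $\varphi(x,y):=\psi(|x'y|)$ evaluated along the pair $(U,Y)$, computing $D_{x}\varphi$, $D_{x}^{2}\varphi$ and using the symmetry $\varphi(x,y)=\varphi(y,x)$ to read off the $y$-derivatives, and only then invokes the product rule with $u$; you instead first build the scalar continuous semimartingale $Z=U'Y$ by coordinatewise integration by parts (the brackets $\langle U^{(i)},Y^{(i)}\rangle$ vanishing because $U$ has locally finite variation) and then apply one-dimensional It\^{o} to $h(Z)$ with $h(x)=\psi(|x|)$. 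The two routes are equivalent in substance, but yours has the advantage of isolating the only delicate point --- the $C^{2}$-regularity of $x\mapsto\psi(|x|)$ at the origin, which you verify explicitly from $\psi'(0)=\psi''(0)=0$ --- in a one-variable computation; the paper simply asserts $\varphi\in C^{2,2}$, leaving the analogous check on the set $\{x'y=0\}$ implicit. Your final integrability bookkeeping (boundedness of $u$, $U$, $Y$ on compacts, $\psi'\in[0,1]$, local integrability of $\dot U$, $\hat{\B}$ and $|\hat{\Sigma}|^{2}$) is also what is needed for the Lebesgue and stochastic integrals to be well defined, so nothing is missing.
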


\begin{proof}
We define $\varphi\in C^{2,2}(\R^{m}\times\R^{m})$ by $\varphi(x,y) := \psi(|x'y|)$ with first- and second-order derivatives with respect to the first coordinate
\begin{equation*}
D_{x}\varphi(x,y) = \psi'(|x'y|)\mathrm{sgn}(x'y)y'\quad\text{and}\quad D_{x}^{2}\varphi(x,y) = \psi''(|x'y|)y y'
\end{equation*}
for any $x,y\in\R^{m}$. Its first- and second-order derivatives relative to the second coordinate satisfy $D_{y}\varphi(x,y) = D_{x}\varphi(y,x)$ and $D_{y}^{2}\varphi(x,y) = D_{x}^{2}\varphi(y,x)$, as $\varphi(x,y)$ $= \varphi(y,x)$. Thus,
\begin{align*}
\varphi(U,Y) &= \varphi(U_{t_{0}},Y_{t_{0}}) + \int_{t_{0}}^{\cdot}\!D_{x}\varphi(Y_{s},U_{s})\Sigma_{s}\,dW_{s}\\
&\quad + \int_{t_{0}}^{\cdot}\!D_{x}\varphi(U_{s},Y_{s})\dot{U}_{s} + D_{x}\varphi(Y_{s},U_{s})\B_{s} + \frac{1}{2}\mathrm{tr}\big(D_{x}^{2}\varphi(Y_{s},U_{s})\Sigma_{s}\Sigma_{s}'\big)\,ds\quad\text{a.s.,}
\end{align*}
by It{\^o'}s formula. As $u$ is locally of bounded variation, the asserted identity follows from It{\^o'}s product rule.
\end{proof}

Now we can state an auxiliary moment estimate.

\begin{Proposition}\label{pr:auxiliary moment estimate}
Suppose that $Z$ and $\hat{\eta}$ are two progressively measurable processes with values in $\R^{m}$ and $\R_{+}$, respectively, and $\tau$ is a stopping time satisfying $\int_{t_{0}}^{\cdot}\!|Z_{s}| + \hat{\eta}_{s}^{2}\,ds < \infty$ and
\begin{align*}
\mathrm{sgn}(Y_{s}^{(i)})\B_{s}^{(i)} \leq Z_{s}^{(i)}\quad\text{on $\{Y_{s}^{(i)} \neq 0\}$}\quad\text{and}\quad |e_{i}'\Sigma_{s}| \leq \hat{\eta}_{s}\hat{\rho}_{i}(|Y_{s}^{(i)}|)
\end{align*}
for all $s\in [t_{0},\tau[$ a.s.~for any $i\in\{1,\dots,m\}$. If $u:[t_{0},\infty[\rightarrow\R_{+}$ is locally absolutely continuous, then
\begin{equation}\label{eq:auxiliary moment estimate}
\begin{split}
E\big[u(t\wedge\tau)|Y_{t}^{\tau}|_{1}\big] &\leq u(t_{0})E\big[|Y_{t_{0}}|_{1}\big]\\
&\quad + E\bigg[\int_{t_{0}}^{t\wedge\tau}\!\dot{u}(s)|Y_{s}|_{1} + u(s)\sum_{i=1}^{m} Z_{s}^{(i)}\mathbbm{1}_{\{Y_{s}^{(i)}\neq 0\}}\,ds\bigg]
\end{split}
\end{equation}
for each $t\geq t_{0}$ for which $\int_{t_{0}}^{t\wedge\tau}\!|\dot{u}(s)|Y_{s}|_{1} + u(s)\sum_{i=1}^{m}Z_{s}^{(i)}\mathbbm{1}_{\{Y_{s}^{(i)}\neq 0\}}|\,ds$ is integrable.
\end{Proposition}

\begin{proof}
For fixed $n\in\N$ we infer from the preceding lemma that the $\R_{+}$-valued adapted continuous process $X^{(n)} := \sum_{i=1}^{m}\psi_{i,n}(|Y^{(i)}|)$ is a semimartingale satisfying
\begin{align*}
u(\cdot\wedge\tau)X_{\cdot\wedge\tau}^{(n)} &\leq u(t_{0})X_{t_{0}}^{(n)} + \int_{t_{0}}^{\cdot\wedge\tau}\!\dot{u}(s)X_{s}^{(n)} + u(s)\bigg(\sum_{i=1}^{m}\psi_{i,n}'(|Y_{s}^{(i)}|)Z_{s}^{(i)} + \frac{m}{n}\hat{\eta}_{s}^{2}\bigg)\,ds\\
&\quad + \int_{t_{0}}^{\cdot\wedge\tau}\!u(s)\sum_{i=1}^{m}\psi_{i,n}'(|Y_{s}^{(i)}|)\mathrm{sgn}(Y_{s}^{(i)})e_{i}'\Sigma_{s}\,dW_{s}\quad\text{a.s.}
\end{align*}
Let us now assume that $E[|Y_{t_{0}}|_{1}] < \infty$, as otherwise the right-hand terms in~\eqref{eq:auxiliary moment estimate} are infinite. In this context, we readily notice that $E[X_{t_{0}}^{(n)}]\leq E[|Y_{t_{0}}|_{1}]$ and
\begin{equation*}
\sum_{i=1}^{m}|\psi_{i,n}'(|Y_{s}^{(i)}|)\mathrm{sgn}(Y_{s}^{(i)})e_{i}'\Sigma_{s}|\leq \hat{\eta}_{s}\sum_{i=1}^{m}\hat{\rho}_{i}(|Y_{s}|_{1})\quad\text{for all $s\in [t_{0},\tau[$ a.s.}
\end{equation*}
So, we define a stopping time by $\tau_{k}:=\inf\{t\geq t_{0}\,|\, |Y_{t}|_{1}\geq k\text{ or }\int_{t_{0}}^{t}\!|Z_{s}| +\hat{\eta}_{s}^{2}\,ds\geq k\}\wedge\tau$ for given $k\in\N$ to get that
\begin{align*}
E\big[u(t\wedge\tau_{k})X_{t\wedge \tau_{k}}^{(n)}\big] &\leq u(t_{0})E[X_{t_{0}}^{(n)}]\\
&\quad + E\bigg[\int_{t_{0}}^{t\wedge\tau_{k}}\!\dot{u}(s)X_{s}^{(n)} + u(s)\bigg(\sum_{i=1}^{m}\psi_{i,n}'(|Y_{s}^{(i)}|)Z_{s}^{(i)} + \frac{m}{n}\hat{\eta}_{s}^{2}\bigg)\,ds\bigg]
\end{align*}
for fixed $t\geq t_{0}$. By monotone and dominated convergence, we may take the limit $n\uparrow\infty$ to deduce~\eqref{eq:auxiliary moment estimate} when $\tau$ is replaced by $\tau_{k}$, as $(X^{(n)})_{n\in\N}$ is an increasing sequence converging pointwise to $|Y|_{1}$. 

Finally, Fatou's lemma, another application of the dominated convergence theorem and the fact that $\sup_{k\in\N}\tau_{k} = \tau$ give the claimed bound, under the stated integrability condition.
\end{proof}

\begin{Remark}\label{re:estimation}
If there are $i\in\{1,\dots,m\}$, a measurable function $\rho:\R_{+}\rightarrow\R$ and a progressively measurable process $\alpha$ such that $\rho(0)=0$ and $Z^{(i)}  = \alpha  + \rho(|Y^{(i)}|)$, then
\begin{equation*}
Z_{s}^{(i)}\mathbbm{1}_{\{Y_{s}^{(i)}\neq 0\}} \leq \alpha_{s}^{+} + \rho(|Y_{s}^{(i)}|)\quad\text{for all $s\geq t_{0}$.}
\end{equation*}
\end{Remark}

We recall a Burkholder-Davis-Gundy inequality for stochastic integrals driven by $W$ from~\cite{Mao08}[Theorem 7.3]. Namely, for $p \geq 2$ let $\overline{w}_{p}:= (p^{p+1}/(2(p-1)^{p-1}))^{p/2}$, if $p > 2$, and $\overline{w}_{p} := 4$, if $p=2$. Then
\begin{equation}\label{eq:special BDG-inequality}
E\bigg[\sup_{\tilde{s}\in [t_{0},t]}\bigg|\int_{t_{0}}^{\tilde{s}}\! X_{s}\,dW_{s}\bigg|^{p}\bigg] \leq \overline{w}_{p}E\bigg[\bigg(\int_{t_{0}}^{t}\!|X_{s}|^{2}\,ds\bigg)^{\frac{p}{2}}\bigg]
\end{equation}
for each $\R^{m\times d}$-valued progressively measurable process $X$ and any $t\geq t_{0}$ for which $\int_{t_{0}}^{t}\!|X_{s}|^{2}\,ds < \infty$. Now we conclude with an auxiliary moment estimate in the supremum norm.

\begin{Proposition}\label{pr:auxiliary supremum moment estimate}
Let $p \geq 1$, $Z$ be an $\R^{m}$-valued progressively measurable process with locally integrable paths, $\hat{\eta}:[t_{0},\infty[\rightarrow\R_{+}^{m}$ be measurable and locally square-integrable and $\tau$ be a stopping time such that
\begin{equation*}
\mathrm{sgn}(Y_{s}^{(i)})\B_{s}^{(i)} \leq Z_{s}^{(i)}\quad\text{on $\{Y_{s}^{(i)} \neq 0\}$} \quad\text{and}\quad |e_{i}'\Sigma_{s}| \leq \hat{\eta}_{i}(s)\hat{\rho}_{i}(|Y_{s}^{(i)}|)
\end{equation*}
for any $s\in [t_{0},\tau[$ a.s.~for all $i\in\{1,\dots,m\}$. Then any locally absolutely continuous function $u:[t_{0},\infty[\rightarrow\R_{+}$ satisfies
\begin{equation}\label{eq:auxiliary supremum moment estimate}
\begin{split}
&E\bigg[\bigg(\sup_{s\in [t_{0},t]} u(s\wedge\tau)|Y_{s}^{\tau}|_{1} - u(t_{0})|Y_{t_{0}}|_{1}\bigg)^{p}\bigg]^{\frac{1}{p}}\\
&\leq E\bigg[\bigg(\int_{t_{0}}^{t\wedge\tau}\!\bigg(\dot{u}(s)|Y_{s}|_{1} + u(s)\sum_{i=1}^{m}Z_{s}^{(i)}\mathbbm{1}_{\{Y_{s}^{(i)}\neq 0\}}\bigg)^{+}\,ds\bigg)^{p}\bigg]^{\frac{1}{p}}\\
&\quad + \bigg(\int_{t_{0}}^{t}\!|\hat{\eta}(s)|_{1}^{2}\,ds\bigg)^{\frac{1}{2} - \frac{1}{p_{0}}}\bigg(\overline{w}_{p_{0}}\int_{t_{0}}^{t}\!|\hat{\eta}(s)|_{1}^{2}u(s)^{p_{0}}E\big[\hat{\rho}\big(|Y_{s}|_{1}\big)^{p_{0}}\mathbbm{1}_{\{\tau > s\}}\big]\,ds\bigg)^{\frac{1}{p_{0}}}
\end{split}
\end{equation}
for all $t \geq t_{0}$ with $p_{0}:=p\vee 2$ and $\hat{\rho} := \max_{i = 1,\dots,m} \hat{\rho}_{i}$.
\end{Proposition}

\begin{proof}
For given $k,n\in\N$ we set $\tau_{k}:=\inf\{t\geq t_{0}\,|\, |Y_{t}|_{1} \geq k\text{ or } \int_{t_{0}}^{t}\!|Z_{s}|\,ds \geq k\}\wedge\tau$ and $X^{(n)} := \sum_{i=1}^{m}\psi_{i,n}(|Y^{(i)}|)$. Then Lemma~\ref{le:Ito product rule} and Minkowski's inequality show that
\begin{equation}\label{eq:auxiliary supremum moment estimate 2}
\begin{split}
E\bigg[\bigg(\sup_{s\in [t_{0},t]} &u(s\wedge\tau_{k})X_{s\wedge\tau_{k}}^{(n)} - u(t_{0})X_{t_{0}}^{(n)}\bigg)^{p}\bigg]^{\frac{1}{p}}\\
&\leq E\bigg[\bigg(\int_{t_{0}}^{t\wedge\tau_{k}}\!\bigg(\dot{u}(s)X_{s}^{(n)} + u(s)\sum_{i=1}^{m}\psi_{i,n}'(|Y_{s}^{(i)}|)Z_{s}^{(i)}\bigg)^{+}\,ds\bigg)^{p}\bigg]^{\frac{1}{p}}\\
&\quad + \frac{1}{n}\int_{t_{0}}^{t}\!u(s)\sum_{i=1}^{m}\hat{\eta}_{i}(s)^{2}\,ds + E\bigg[\bigg(\sup_{s\in [t_{0},t]} I_{s\wedge\tau_{k}}^{(n)}\bigg)^{p}\bigg]^{\frac{1}{p}}
\end{split}
\end{equation}
for fixed $t\geq t_{0}$, where $I^{(n)}$ denotes a continuous local martingale with $I_{t_{0}}^{(n)} = 0$ that is indistinguishable from the stochastic integral
\begin{equation*}
\int_{t_{0}}^{\cdot}\!u(s)\sum_{i=1}^{m}\psi_{i,n}'(|Y_{s}^{(i)}|)\mathrm{sgn}(Y_{s}^{(i)})e_{i}'\Sigma_{s}\,dW_{s}.
\end{equation*}
Moreover, as $\sum_{i=1}^{m}\psi_{i,n}'(|Y_{s}^{(i)}|)|e_{i}'\Sigma_{s}|\leq |\hat{\eta}(s)|_{1}\hat{\rho}(|Y_{s}|_{1})$ for all $s\in [t_{0},t]$ with $s < \tau_{k}$ a.s., it follows from H\oe lder's inequality,~\eqref{eq:special BDG-inequality} and Jensen's inequality that
\begin{equation}\label{eq:auxiliary supremum moment estimate 3}
\begin{split}
\overline{w}_{p_{0}}^{-1} E\bigg[\sup_{s\in [t_{0},t]} &|I_{s\wedge\tau_{k}}^{(n)}|^{p}\bigg]^{\frac{p_{0}}{p}} \leq E\bigg[\bigg(\int_{t_{0}}^{t\wedge\tau_{k}}\!|\hat{\eta}(s)|_{1}^{2}u(s)^{2}\hat{\rho}\big(|Y_{s}|_{1}\big)\,ds\bigg)^{\frac{p_{0}}{2}}\bigg]\\
&\leq \bigg(\int_{t_{0}}^{t}\!|\hat{\eta}(s)|_{1}^{2}\,ds\bigg)^{\frac{p_{0}}{2} - 1}\int_{t_{0}}^{t}\!|\hat{\eta}(s)|_{1}^{2} u(s)^{p_{0}}E\big[\hat{\rho}\big(|Y_{s}|_{1}\big)^{p_{0}}\mathbbm{1}_{\{\tau_{k} > s\}}\big]\,ds.
\end{split}
\end{equation}
Now recall that any sequence $(x_{n})_{n\in\N}$ of real-valued functions on $[t_{0},t]$ and each function $x:[t_{0},t]\rightarrow\R$ such that $x(s)\leq\liminf_{n\uparrow\infty} x_{n}(s)$ for all $s\in [t_{0},t]$ satisfies
\begin{equation*}
\sup_{s\in [t_{0},t]} x(s) \leq \liminf_{n\uparrow\infty} \sup_{s\in [t_{0},t]} x_{n}(s).
\end{equation*} 
In combination with Fatou's lemma, this shows that~\eqref{eq:auxiliary supremum moment estimate} follows when $\tau$ is replaced by $\tau_{k}$ from~\eqref{eq:auxiliary supremum moment estimate 2},~\eqref{eq:auxiliary supremum moment estimate 3} and dominated convergence. As $\sup_{k\in\N} \tau_{k} = \tau$, monotone convergence yields the asserted estimate. 
\end{proof}

\subsection{Quantitative first moment estimates}

To deduce an $L^{1}$-estimate based on Bihari's inequality from the results of Section~\ref{se:4.1}, we fix $l\in\N$ and $\alpha,\beta\in ]0,1]^{l}$ and introduce two assumptions on the random It{\^o} process $Y$:
\begin{enumerate}[label=(A.\arabic*), ref=A.\arabic*, leftmargin=\widthof{(A.2)} + \labelsep]
\item\label{Assumption 1} For any $n\in\N$ there are increasing $\hat{\rho}_{1,n},\dots,\hat{\rho}_{m,n}\in C(\R_{+})$ and an $\R_{+}$-valued progressively measurable process $\hat{\eta}^{(n)}$ with locally square-integrable paths so that
\begin{equation*}
\hat{\rho}_{i,n} > 0\quad\text{on $]0,\infty[$,}\quad \int_{0}^{1}\frac{1}{\hat{\rho}_{i,n}(v)^{2}}\,dv = \infty\quad\text{and}\quad |e_{i}'\Sigma_{s}|\leq \hat{\eta}_{s}^{(n)}\hat{\rho}_{i,n}(|Y_{s}^{(i)}|)
\end{equation*}
for all $s\geq t_{0}$ with $|Y_{s}|_{1}\leq n$ a.s.~for every $i\in\{1,\dots,m\}$.

\item\label{Assumption 2} There exist $\rho_{1},\dots,\rho_{l},\varrho_{1},\dots,\varrho_{l}\in C(\R_{+})$, a measurable map $\theta:[t_{0},\infty[\rightarrow\R_{+}^{l}$ and
\begin{equation*}
\text{an $\R_{+}^{m}$-valued process $\kappa$}\quad\text{and}\quad\text{two $\R_{+}^{m\times l}$-valued processes $\eta$, $\lambda$}
\end{equation*}
that are all progressively measurable and have locally integrable paths such that
\begin{align*}
\mathrm{sgn}(Y^{(i)})\B^{(i)} \leq \kappa^{(i)} + \sum_{k=1}^{l} \eta^{(i,k)}\rho_{k}(|Y|_{1}) + \lambda^{(i,k)}\varrho_{k}\circ\theta_{k}\quad\text{a.s.}
\end{align*}
for each $i\in\{1,\dots,m\}$. Further, $\rho_{k}$, $\varrho_{k}$ are positive on $]0,\infty[$ and vanish at $0$, $\rho_{k}^{\frac{1}{\alpha_{k}}}$ is concave, $\varrho_{k}$ is increasing and
\begin{equation*}
E\big[|\kappa|_{1}\big],\quad \bigg[\sum_{i=1}^{m}\eta^{(i,k)}\bigg]_{\frac{1}{1-\alpha_{k}}},\quad \sum_{i=1}^{m} E\big[\lambda^{(i,k)}\big]
\end{equation*}
are locally integrable for all $k\in\{1,\dots,l\}$.
\end{enumerate}

For a measurable map $\theta:[t_{0},\infty[\rightarrow\R_{+}^{l}$ and an $\R_{+}^{m\times l}$-valued progressively measurable process $\lambda$, as in~\eqref{Assumption 2}, we rely on a domination condition:
\begin{enumerate}[label=(A.\arabic*), ref=A.\arabic*, leftmargin=\widthof{(A.3)} + \labelsep]
\setcounter{enumi}{2}
\item\label{Assumption 3} We have $\theta_{k}(s) \leq  E[|Y_{s}|_{1}]$ for all $s\geq t_{0}$ with $\sum_{i=1}^{m}E[\lambda_{s}^{(i,k)}] > 0$ for any $k\in\{1,\dots,l\}$.
\end{enumerate}

If~\eqref{Assumption 2} is satisfied, then we may define two measurable locally integrable functions $\gamma,\delta:[t_{0},\infty[\rightarrow [0,\infty]$ via
\begin{align*}
\gamma(s) &:= \sum_{k=1}^{l}\alpha_{k}\bigg[\sum_{i=1}^{m}\eta_{s}^{(i,k)}\bigg]_{\frac{1}{1-\alpha_{k}}} + \beta_{k}\sum_{i=1}^{m}E\big[\lambda_{s}^{(i,k)}\big]\quad\text{and}\\
\delta(s) &:= \sum_{k=1}^{l}(1-\alpha_{k})\bigg[\sum_{i=1}^{m}\eta_{s}^{(i,k)}\bigg]_{\frac{1}{1-\alpha_{k}}} + (1-\beta_{k})\sum_{i=1}^{m}E\big[\lambda_{s}^{(i,k)}\big].
\end{align*}
We also recall the definitions~\eqref{eq:rho-map 1} and~\eqref{eq:rho-map 2}. This leads to a general estimation result.

\begin{Theorem}\label{th:abstract moment estimate}
Let~\eqref{Assumption 1}-\eqref{Assumption 3} hold, $E[|Y_{t_{0}}|_{1}] < \infty$, $\sum_{k=1}^{l}\sum_{i=1}^{m} E[\lambda^{(i,k)}]\varrho_{k}\circ\theta_{k}$ be locally integrable and $\rho_{0},\varrho_{0}\in C(\R_{+})$ be defined by
\begin{equation*}
\rho_{0}(v):=\max_{k = 1,\dots,l} \rho_{k}(v)^{\frac{1}{\alpha_{k}}}\quad\text{and}\quad\varrho_{0}(v):=\rho_{0}(v)\vee\max_{k = 1,\dots,l}\varrho_{k}(v)^{\frac{1}{\beta_{k}}}.
\end{equation*}
If $\Phi_{\rho_{0}}(\infty) = \infty$ or $\sum_{k=1}^{l}\sum_{i=1}^{m}E[\eta^{(i,k)}\rho_{k}(|Y|_{1})]$ is locally integrable, then $E[|Y|_{1}]$ is locally bounded and
\begin{equation*}
\sup_{s\in [t_{0},t]}E\big[|Y_{s}|_{1}\big] \leq \Psi_{\varrho_{0}}\bigg(E\big[|Y_{t_{0}}|_{1}\big] + \int_{t_{0}}^{t}\!E\big[|\kappa_{s}|_{1}\big] + \delta(s)\,ds,\int_{t_{0}}^{t}\!\gamma(s)\,ds\bigg)
\end{equation*}
for any $t\in [t_{0},t_{0}^{+}[$, where $t_{0}^{+} > t_{0}$ denotes the supremum over all $t \geq t_{0}$ for which
\begin{equation*}
\bigg(E\big[|Y_{t_{0}}|_{1}\big] + \int_{t_{0}}^{t}\!E\big[|\kappa_{s}|_{1}\big] + \delta(s)\,ds,\int_{t_{0}}^{t}\!\gamma(s)\,ds\bigg)\in D_{\varrho_{0}}.
\end{equation*} 
\end{Theorem}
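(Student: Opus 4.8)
The plan is to reduce the multidimensional bound to a scalar Bihari inequality for the function $u\colon[t_{0},\infty)\rightarrow[0,\infty]$, $t\mapsto E[|Y_{t}|_{V_{t}}]$, and then to invoke the maps $\Phi_{\varrho_{0}}$ and $\Psi_{\varrho_{0}}$ from~\eqref{eq:rho-map 1} and~\eqref{eq:rho-map 2}. Since~\eqref{Assumption 1} controls the diffusion $\hat{\Sigma}$ only on compact sets, I would first localise: for $n\in\N$ put $\tau_{n}:=\inf\{t\in[t_{0},\infty)\,|\,|Y_{t}|_{V_{t}}\geq n\}$, a non-decreasing sequence of stopping times with $\tau_{n}\uparrow\infty$ a.s., since $|Y|_{V}$ has continuous paths. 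On $[t_{0},\tau_{n})$ one has $|Y_{s}|_{V_{s}}<n$, so~\eqref{Assumption 1} supplies the Osgood majorant $\null_{n}\hat{\eta}\,\hat{\rho}_{i,n}$ required to apply Proposition~\ref{pr:auxiliary moment estimate}.

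Next I would apply Proposition~\ref{pr:auxiliary moment estimate} with $u\equiv 1$, the stopping time $\tau_{n}$ and $Z^{(i)}:=\kappa^{(i)}+\sum_{k=1}^{l}\null_{k}\eta^{(i)}\rho_{k}(|Y|_{V})+\null_{k}\lambda^{(i)}\varrho_{k}\circ\theta_{k}$ provided by~\eqref{Assumption 2}; since all entries of $Z$ are non-negative the indicators there may be dropped, and its integrability hypothesis holds because $\kappa$ and the $\null_{k}\eta$ lie in the $\mathscr{S}_{loc}$-cones, each $\rho_{k}$ is non-decreasing (as $\rho_{k}^{1/\alpha_{k}}$ is concave) and hence $\leq\rho_{k}(n)$ on $[t_{0},\tau_{n})$, and $\sum_{k}E[\null_{k}\lambda^{(0)}]\varrho_{k}\circ\theta_{k}$ is locally integrable. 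Writing $u_{n}(t):=E[|Y_{t\wedge\tau_{n}}|_{V_{t\wedge\tau_{n}}}]\leq n$, summing over $i$ and using the $(0)$-notation of~\eqref{eq:coefficients definition} gives
\[
u_{n}(t)\leq E\big[|Y_{t_{0}}|_{V_{t_{0}}}\big]+\int_{t_{0}}^{t}E\big[\kappa_{s}^{(0)}\big]+\sum_{k=1}^{l}E\big[\null_{k}\eta_{s}^{(0)}\rho_{k}(|Y_{s}|_{V_{s}})\mathbbm{1}_{\{s<\tau_{n}\}}\big]+\sum_{k=1}^{l}E\big[\null_{k}\lambda_{s}^{(0)}\mathbbm{1}_{\{s<\tau_{n}\}}\big]\varrho_{k}(\theta_{k}(s))\,ds.
\]
For the $\null_{k}\eta$-terms I would write $\rho_{k}=(\rho_{k}^{1/\alpha_{k}})^{\alpha_{k}}$ and combine the inequality~\eqref{eq:essential inequality} (with $\beta=0$) with Jensen's inequality (concavity of $\rho_{k}^{1/\alpha_{k}}$) to bound the $k$-th summand by $[\null_{k}\eta_{s}^{(0)}]_{q_{\alpha_{k}}}(1-\alpha_{k}+\alpha_{k}\rho_{0}(u_{n}(s)))$; for the $\null_{k}\lambda$-terms the domination~\eqref{Assumption 3}, the monotonicity of $\varrho_{k}$ and Young's inequality give $E[\null_{k}\lambda_{s}^{(0)}](1-\beta_{k}+\beta_{k}\varrho_{0}(u_{n}(s)))$, using $\rho_{0}\leq\varrho_{0}$. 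Thereby the factors $\alpha_{k},\beta_{k}$ are collected into $\gamma$ and the complementary factors $1-\alpha_{k},1-\beta_{k}$ into $\delta$, while the powers $1/\alpha_{k},1/\beta_{k}$ that define $\rho_{0},\varrho_{0}$ are precisely the cost of passing to concave majorants of $\rho_{k},\varrho_{k}$. This leaves $u_{n}(t)\leq A(t)+\int_{t_{0}}^{t}\gamma(s)\varrho_{0}(u_{n}(s))\,ds$ with $A(t):=E[|Y_{t_{0}}|_{V_{t_{0}}}]+\int_{t_{0}}^{t}E[\kappa_{s}^{(0)}]+\delta(s)\,ds$ non-decreasing and $\gamma\geq 0$, so Bihari's inequality yields $u_{n}(t)\leq\Psi_{\varrho_{0}}(A(t),\int_{t_{0}}^{t}\gamma(s)\,ds)$ on $[t_{0},t_{0}^{+})$, uniformly in $n$; Fatou's lemma transfers this to $u=E[|Y|_{V}]$, and monotonicity of $\Psi_{\varrho_{0}}$ in both arguments upgrades it to the stated supremum bound.

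The delicate point — which I expect to be the main obstacle — is a circularity hidden above: the domination~\eqref{Assumption 3} bounds $\theta_{k}(s)$ by $c_{k}E[|Y_{s}|_{V_{s}}]$, but this un-localised quantity need not dominate $u_{n}(s)$, and its very finiteness is part of what is to be shown. The dichotomy hypothesis is exactly what resolves this. If $\sum_{k}E[\null_{k}\eta^{(0)}\rho_{k}(|Y|_{V})]$ is locally integrable, one retains the $\null_{k}\eta$-terms in the displayed inequality as a fixed forcing, gets $u_{n}(t)\leq\tilde{A}(t)$ with $\tilde{A}$ finite, locally bounded and independent of $n$, and concludes by Fatou that $E[|Y|_{V}]$ is locally bounded on all of $[t_{0},\infty)$; if instead $\Phi_{\rho_{0}}(\infty)=\infty$, one runs the cruder Bihari inequality with rate $\sum_{k}\alpha_{k}[\null_{k}\eta^{(0)}]_{q_{\alpha_{k}}}$, keeping only the $\null_{k}\lambda$-part as forcing, for which $\Phi_{\rho_{0}}^{-1}$ is globally defined, and again obtains a uniform-in-$n$ a priori bound and hence global local boundedness of $E[|Y|_{V}]$. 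Once $E[|Y|_{V}]$ is known to be a locally bounded function,~\eqref{Assumption 3} can be invoked legitimately, the sharp scalar inequality above becomes available, and the asserted estimate follows on $[t_{0},t_{0}^{+})$. The remaining technical effort — reconciling the localised bounds for $u_{n}$ with $E[|Y|_{V}]$ itself, i.e.\ controlling the localisation error $n\,P(\tau_{n}\leq\cdot)$ by monotone and dominated convergence — is where I expect the argument to require the most care.
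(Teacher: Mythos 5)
Your proposal is correct and follows essentially the same route as the paper's proof: localisation with the stopping times $\tau_{n}$, Proposition~\ref{pr:auxiliary moment estimate} with $u\equiv 1$, the bound~\eqref{eq:essential inequality} combined with Jensen for the $\null_{k}\eta$-terms and Young plus~\eqref{Assumption 3} for the $\null_{k}\lambda$-terms, and a two-stage Bihari argument in which local boundedness of $E[|Y|_{V}]$ is first secured via the stated dichotomy before the sharp estimate with $\Psi_{\varrho_{0}}$ is derived. You also correctly identify the one genuinely delicate point (the circularity in invoking~\eqref{Assumption 3} before $E[|Y|_{V}]$ is known to be finite) and resolve it exactly as the paper does; the passage from the localised to the unlocalised bound needs only Fatou's lemma and monotone/dominated convergence, not any control of $n\,P(\tau_{n}\leq\cdot)$.
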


\begin{proof}
We introduce the stopping time $\tau_{n}:=\inf\{t \geq t_{0}\,|\, |Y_{t}|_{1}\geq n\}$ for fixed $n\in\N$ and set $\hat{\kappa} := E[|\kappa_{s}|_{1}] + \sum_{k=1}^{l}\sum_{i=1}^{m}E[\lambda^{(i,k)}]\varrho_{k}\circ\theta_{k}$. Then Proposition~\ref{pr:auxiliary moment estimate} and Remark~\ref{re:estimation} imply that
\begin{equation}\label{eq:abstract moment estimate 1}
\begin{split}
E\big[|Y_{t}^{\tau_{n}}|_{1}\big] &\leq E\big[|Y_{t_{0}}|_{1}\big] + \int_{t_{0}}^{t}\!\hat{\kappa}(s) + \sum_{k=1}^{l}\sum_{i=1}^{m}E\big[\eta_{s}^{(i,k)}\rho_{k}(|Y_{s}|_{1})\mathbbm{1}_{\{\tau_{n} > s\}}\big]\,ds
\end{split}
\end{equation}
for given $t\geq t_{0}$. Thereby, we notice that the local integrability of the measurable function $[t_{0},\infty[\rightarrow [0,\infty]$, $s\mapsto \sum_{i=1}^{m}E[\eta_{s}^{(i,k)}\rho_{k}(|Y_{s}|_{1})\mathbbm{1}_{\{\tau_{n} > s\}}]$ follows from~\eqref{eq:essential inequality}, which yields
\begin{equation}\label{eq:abstract moment estimate 2}
\sum_{i=1}^{m}E\big[\eta_{s}^{(i,k)}\rho_{k}(|Y_{s}|_{1})\mathbbm{1}_{\{\tau > s\}}\big]\leq \bigg[\sum_{i=1}^{m}\eta^{(i,k)}\bigg]_{\frac{1}{1-\alpha_{k}}}\big(1-\alpha_{k} + \alpha_{k}\rho_{k}\big(E\big[|Y_{s}^{\tau}|_{1}\big]\big)^{\frac{1}{\alpha_{k}}}\big)
\end{equation}
for all $s\in [t_{0},t]$, each $k\in\{1,\dots,l\}$ and every stopping time $\tau$ for which $E[|Y^{\tau}|_{1}]$ is finite, because $\rho_{k}^{1/\alpha_{k}}$ is concave, by assumption.

Thus, let us set $\hat{\delta}:=\sum_{k=1}^{l}(1-\alpha_{k})[\sum_{i=1}^{m}\eta^{(i,k)}]_{(1-\alpha_{k})^{-1}}$. If $\Phi_{\rho_{0}}(\infty) = \infty$ holds, then we apply Bihari's inequality to~\eqref{eq:abstract moment estimate 1} and infer from Fatou's lemma that
\begin{align*}
E\big[|Y_{t}|_{1}\big]&\leq \liminf_{n\uparrow\infty} E\big[|Y_{t}^{\tau_{n}}|_{1}\big]\\
&\leq \Psi_{\rho_{0}}\bigg(E\big[|Y_{t_{0}}|_{1}\big] + \int_{t_{0}}^{t}\!(\hat{\kappa} + \hat{\delta})(s)\,ds,\int_{t_{0}}^{t}\!\sum_{k=1}^{l}\alpha_{k}\bigg[\sum_{i=1}^{m}\eta^{(i,k)}\bigg]_{\frac{1}{1-\alpha_{k}}}\,ds\bigg),
\end{align*}
as the domain of $\Psi_{\rho_{0}}$ satisfies $D_{\rho_{0}} = \R_{+}^{2}$. For this reason, $E[|Y|_{1}]$ is locally bounded in this case. By choosing $\tau=\infty$ in~\eqref{eq:abstract moment estimate 2}, we see that it suffices to consider the case when $\sum_{k=1}^{l}\sum_{i=1}^{m}E[\eta^{(i,k)}\rho_{k}(|Y|_{1})]$ is locally integrable. Then
\begin{align*}
E\big[|Y_{t}|_{1}\big] \leq E\big[|Y_{t_{0}}|_{1}\big] + \int_{t_{0}}^{t}\!(\hat{\kappa} + \hat{\delta})(s) + \sum_{k=1}^{l}\alpha_{k}\bigg[\sum_{i=1}^{m}\eta^{(i,k)}\bigg]_{\frac{1}{1-\alpha_{k}}}\rho_{k}\big(E\big[|Y_{s}|_{1}\big]\big)^{\frac{1}{\alpha_{k}}}\big)\,ds
\end{align*}
follows from~\eqref{eq:abstract moment estimate 1}, Fatou's lemma and~\eqref{eq:abstract moment estimate 2}. Thereby, we readily checked that $E[|Y|_{1}]$ is actually locally bounded. Finally, Young's inequality gives us that
\begin{equation*}
\sum_{i=1}^{m}E\big[\lambda^{(i,k)}\big]\varrho_{k}\circ\theta_{k} \leq \sum_{i=1}^{m}E\big[\lambda^{(i,k)}\big]\big(1 - \beta_{k} + \beta_{k}\varrho_{k}\big(E\big[|Y|_{1}\big]\big)^{\frac{1}{\beta_{k}}}\big)
\end{equation*}
on $[t_{0},t]$ for all $k\in\{1,\dots,l\}$ and we conclude the proof with another application of Bihari's inequality, since $\hat{\delta} + \sum_{k=1}^{l}(1-\beta_{k})\sum_{i=1}^{m}E[\lambda^{(i,k)}] = \delta$.
\end{proof}

For a stability analysis we consider another condition, which explicitly measures the dependence on each coordinate and implies~\eqref{Assumption 2}:
\begin{enumerate}[label=(A.\arabic*), ref=A.\arabic*, leftmargin=\widthof{(A.4)} + \labelsep]
\setcounter{enumi}{3}
\item\label{Assumption 4} There are a measurable map $\theta:[t_{0},\infty[\rightarrow\R_{+}^{l}$ and progressively measurable processes $\kappa$, $\eta$ and $\lambda$ with values in $\R_{+}^{m}$, $\R^{m\times m\times l}$ and $\R_{+}^{m\times l}$, respectively,  such that
\begin{equation*}
\mathrm{sgn}(Y^{(i)})\B^{(i)} \leq \kappa^{(i)} + \sum_{k=1}^{l}\bigg(\sum_{j=1}^{m}\eta^{(i,j,k)} |Y^{(j)}|^{\alpha_{k}}\bigg) + \lambda^{(i,k)}\theta_{k}^{\beta_{k}}\quad\text{a.s.}
\end{equation*}
for any $i\in\{1,\dots,m\}$. Moreover, the paths of $\kappa$, $\eta$, $\lambda$ are locally integrable and we have $\eta^{(i,j,k)}\geq 0$, if $i\neq j$, and \begin{equation*}
E[\kappa^{(i)}],\quad [\eta^{(i,j,k)}]_{\frac{1}{1-\alpha_{k}}},\quad E[\lambda^{(i,k)}]
\end{equation*}
are locally integrable for any $i,j\in\{1,\dots,m\}$ and $k\in\{1,\dots,l\}$.
\end{enumerate}

If~\eqref{Assumption 4} and~\eqref{Assumption 3} hold, then we may utilise the two functions $\gamma_{1}$ and $\hat{\delta}_{1}$ given by~\eqref{eq:stability exponential coefficient} and~\eqref{eq:stability drift coefficient} to get an explicit moment estimate.

\begin{Theorem}\label{th:moment stability estimate}
Let~\eqref{Assumption 1},~\eqref{Assumption 4},~\eqref{Assumption 3} be valid, $E[|Y_{t_{0}}|_{1}] < \infty$ and $\sum_{k=1}^{l}\sum_{i=1}^{m}E[\lambda^{(i,k)}]\theta_{k}^{\beta_{k}}$ be locally integrable. Then
\begin{equation*}
E\big[|Y_{t}|_{1}\big] \leq e^{\int_{t_{0}}^{t}\!\gamma_{1}(s)\,ds}E\big[|Y_{t_{0}}|_{1}\big] + \int_{t_{0}}^{t}\!e^{\int_{s}^{t}\!\gamma_{1}(\tilde{s})\,d\tilde{s}}\big(E\big[|\kappa_{s}|_{1}\big] + \hat{\delta}_{1}(s)\big)\,ds
\end{equation*}
for all $t\geq t_{0}$. In particular if $\gamma_{1}^{+}$, $E[|\kappa_{s}|_{1}]$ and $\hat{\delta}_{1}$ are integrable, then $E[|Y|_{1}] $ is bounded. If in addition $\int_{t_{0}}^{\infty}\!\gamma_{1}^{-}(s)\,ds = \infty$, then $\lim_{t\uparrow\infty} E\big[|Y_{t}|_{1}\big] = 0$.
\end{Theorem}

\begin{proof}
First, we observe that~\eqref{Assumption 2} holds when the appearing process $\eta$ there is replaced by the $\R^{m\times l}$-valued process $\tilde{\eta}$ defined coordinatewise by $\tilde{\eta}^{(i,k)} := \sum_{j=1}^{m} (\eta^{(i,j,k)})^{+}$ and it holds that
\begin{equation*}
\rho_{k}(v) = v^{\alpha_{k}}\quad\text{and}\quad\varrho_{k}(v) = v^{\beta_{k}}
\end{equation*}
for all $v\geq 0$. As $\rho_{0}\in C(\R_{+})$ given by $\rho_{0}(v) = v$ satisfies $\Phi_{\rho_{0}}(\infty) = \infty$, Theorem~\ref{th:abstract moment estimate} shows us that $E[|Y|_{1}]$ is locally bounded. 

Thus, we define an $\R^{m\times l}$-valued process $\hat{\eta}$ coordinatewise by $\hat{\eta}^{(j,k)}:=\sum_{i=1}^{m}\eta^{(i,j,k)}$. Then Proposition~\ref{pr:auxiliary moment estimate} and Remark~\ref{re:estimation} imply that
\begin{equation}\label{eq:moment stability estimate 1}
\begin{split}
&u(t)E\big[|Y_{t}|_{1}\big] \leq u(t_{0})E\big[|Y_{t_{0}}|_{1}\big] + \int_{t_{0}}^{t}\!u(s)E\big[|\kappa_{s}|_{1}\big] + \dot{u}(s)E\big[|Y_{s}|_{1}\big]\,ds\\
&\quad + \int_{t_{0}}^{t}\!u(s)\bigg(\sum_{k=1}^{l}\bigg(\sum_{j=1}^{m}E\big[\hat{\eta}_{s}^{(j,k)}|Y_{s}^{(j)}|^{\alpha_{k}}\big]\bigg) + \sum_{i=1}^{m}E\big[\lambda_{s}^{(i,k)}\big]\theta_{k}(s)^{\beta_{k}}\bigg)\,ds
\end{split}
\end{equation}
for fixed $t\geq t_{0}$ and any locally absolutely continuous function $u:[t_{0},\infty[\rightarrow\R_{+}$. From~\eqref{eq:essential inequality} we directly obtain that
\begin{equation*}
E\big[\hat{\eta}^{(j,k)}|Y^{(j)}|^{\alpha_{k}}\big] \leq \big[\hat{\eta}^{(j,k)}\big]_{\frac{1}{1-\alpha_{k}}}\big(1-\alpha_{k} + \alpha_{k} E\big[|Y^{(j)}|\big]\big)
\end{equation*}
and $\sum_{i=1}^{m}E[\lambda^{(i,k)}]\theta_{k}^{\beta_{k}} \leq \sum_{i=1}^{m}E[\lambda^{(i,k)}](1-\beta_{k} + \beta_{k}E[|Y|_{1}])$ on $[t_{0},t]$ for all $j\in\{1,\dots,m\}$ and $k\in\{1,\dots,l\}$. As a consequence,
\begin{align*}
&\sum_{k=1}^{l}\bigg(\sum_{j=1}^{m}E\big[\hat{\eta}^{(j,k)}|Y^{(j)}|^{\alpha_{k}}\big]\bigg) + \sum_{i=1}^{m}E\big[\lambda^{(i,k)}\big]\theta_{k}^{\beta_{k}} \leq \hat{\delta}_{1} + \gamma_{1}E\big[|Y|_{1}\big]
\end{align*}
a.e.~on $[t_{0},t]$. Thus, by choosing $u(\tilde{s}) = \exp(-\int_{t_{0}}^{\tilde{s}}\!\gamma_{1}(s)\,ds)$ for any $\tilde{s}\geq t_{0}$ in~\eqref{eq:moment stability estimate 1}, we get the asserted estimate after dividing by $u(t)$.

Next, let us assume that $\gamma_{1}^{+}$, $E[|\kappa_{s}|_{1}]$ and $\hat{\delta}_{1}$ are integrable. Then the second assertion follows from the bound
\begin{equation*}
\sup_{t\geq t_{0}} E\big[|Y_{t}|_{1}\big] \leq e^{\int_{t_{0}}^{\infty}\!\gamma_{1}^{+}(s)\,ds}\bigg(E\big[|Y_{t_{0}}|_{1}\big] + \int_{t_{0}}^{\infty}\! E\big[|\kappa_{s}|_{1}\big] + \hat{\delta}_{1}(s)\,ds\bigg).
\end{equation*}
For the last claim, let additionally $\int_{t_{0}}^{\infty}\!\gamma_{1}^{-}(s)\,ds = \infty$. Then $\lim_{t\uparrow\infty}\exp(\int_{s}^{t}\!\gamma_{1}(\tilde{s})\,d\tilde{s})= 0$ for every $s\geq t_{0}$, by monotone convergence. Thus,
\begin{equation*}
\lim_{t\uparrow\infty}\int_{t_{0}}^{t}\!e^{\int_{s}^{t}\!\gamma_{1}(\tilde{s})\,d\tilde{s}}\big(E\big[|\kappa_{s}|_{1}\big] + \hat{\delta}_{1}(s)\big)\,ds = 0
\end{equation*}
follows from dominated convergence, which completes the proof.
\end{proof}

\begin{Remark}\label{re:moment stability}
Suppose that $\kappa=0$ a.s., and let $\hat{\delta}_{1}$ vanish a.e., which holds if $\alpha_{k} = \beta_{k} = 1$ for any $k\in\{1,\dots,l\}$. If $\gamma_{1}^{+}$ is integrable, then
\begin{equation*}
\sup_{t\geq t_{0}}e^{\int_{t_{0}}^{t}\!\gamma_{1}^{-}(s)\,ds}E\big[|Y_{t}|_{1}\big] \leq  e^{\int_{t_{0}}^{\infty}\gamma_{1}^{+}(s)\,ds}E\big[|Y_{t_{0}}|_{1}\big] < \infty.
\end{equation*}
If additionally $\int_{t_{0}}^{\infty}\gamma_{1}^{-}(s)\,ds = \infty$, then from $a\gamma_{1}^{-} + \gamma_{1} = \gamma_{1}^{+} - (1-a)\gamma_{1}^{-}$ we infer that
\begin{equation*}
\lim_{t\uparrow\infty} e^{a\int_{t_{0}}^{t}\!\gamma_{1}^{-}(s)\,ds}E\big[|Y_{t}|_{1}\big] = 0\quad\text{for all $a\in [0,1[$}.
\end{equation*}
These two facts give more insight into the rate of convergence.
\end{Remark}

\subsection{Moment estimates in the supremum norm}

In this section we deduce absolute $p$-th moment estimates in the supremum norm for $p\in [1,2]$. To this end, we require a H\oe lder condition instead of the weaker Osgood condition~\eqref{Assumption 1} on compact sets and restrict~\eqref{Assumption 2} and~\eqref{Assumption 4} as follows:
\begin{enumerate}[label=(A.\arabic*), ref=A.\arabic*, leftmargin=\widthof{(A.5)} + \labelsep]
\setcounter{enumi}{4}
\item\label{Assumption 5} There exists a measurable locally square-integrable function $\hat{\eta}:[t_{0},\infty[\rightarrow\R_{+}^{m}$ such that $|e_{i}'\Sigma| \leq \hat{\eta}_{i}|Y^{(i)}|^{\frac{1}{2}}$ a.s.~for all $i\in\{1,\dots,m\}$.

\item\label{Assumption 6} Assumption~\eqref{Assumption 2} holds when $\rho_{k}(v) = v^{\alpha_{k}}$ for all $v\geq 0$ and $k\in\{1,\dots,l\}$ and $\eta = \overline{\eta}$ for some measurable locally integrable map $\overline{\eta}:[t_{0},\infty[\rightarrow\R_{+}^{m\times l}$.

\item\label{Assumption 7} Assumption~\eqref{Assumption 4} is valid and there exists a measurable locally integrable map $\overline{\eta}:[t_{0},\infty[\rightarrow\R^{m\times m\times l}$ such that $\eta = \overline{\eta}$.
\end{enumerate}

Under~\eqref{Assumption 5} and~\eqref{Assumption 6}, we define two $[0,\infty]$-valued measurable functions $f_{p}$ and $g_{p}$ on the set of all $(t_{1},t)\in [t_{0},\infty[^{2}$ with $t_{1}\leq t$ by
\begin{equation*}
f_{p}(t_{1},t) := E\bigg[\bigg(\int_{t_{1}}^{t}\!|\kappa_{s}|_{1} + \sum_{k=1}^{l}\sum_{i=1}^{m}\lambda_{s}^{(i,k)}\varrho_{k}(\theta_{k}(s))\,ds\bigg)^{p}\bigg]^{\frac{1}{p}} + 2\bigg(\int_{t_{1}}^{t}\!|\hat{\eta}(s)|_{1}^{2}E[|Y_{s}|_{1}]\,ds\bigg)^{\frac{1}{2}}
\end{equation*}
and
\begin{equation*}
g_{p}(t_{1},t) := f_{p}(t_{1},t) + \sum_{k=1}^{l}E\big[|Y_{t_{1}}|_{1}^{\alpha_{k}p}\big]^{\frac{1}{p}} \int_{t_{1}}^{t}\!\sum_{i=1}^{m}\overline{\eta}_{i,k}(s)\,ds.
\end{equation*}
In addition, let us set $\underline{\alpha} := \min_{k=1,\dots,l}\alpha_{k}$ and $\overline{\alpha} := \max_{k = 1,\dots,l}\alpha_{k}$.

\begin{Proposition}\label{pr:abstract supremum moment estimate}
Let~\eqref{Assumption 5},~\eqref{Assumption 6} and~\eqref{Assumption 3} be valid, $\sum_{k=1}^{l}\sum_{i=1}^{m}E[\lambda^{(i,k)}]\varrho_{k}\circ\theta_{k}$ be locally integrable and $\rho_{0}\in C(\R_{+})$ be given by $\rho_{0}(v):= v^{\underline{\alpha}} \mathbbm{1}_{[0,1]}(v)+ v^{\overline{\alpha}}\mathbbm{1}_{]1,\infty[}(v)$. If
\begin{equation}\label{eq:expectations}
E\big[|Y_{t_{0}}|_{1}^{p}\big],\quad E\bigg[\bigg(\int_{t_{0}}^{t}\!|\kappa_{s}|_{1}\,ds\bigg)^{p}\bigg] \quad\text{and}\quad E\bigg[\bigg(\int_{t_{0}}^{t}\!\sum_{k=1}^{l}\sum_{i=1}^{m}\lambda_{s}^{(i,k)}\,ds\bigg)^{p}\bigg]
\end{equation}
are finite, then $\sup_{s\in [t_{0},t]} |Y_{s}|_{1}$ is $p$-fold integrable and
\begin{equation*}
E\bigg[\bigg(\sup_{s\in [t_{1},t]}|Y_{s}|_{1} - |Y_{t_{1}}|_{1}\bigg)^{p}\bigg] \leq \Psi_{\rho_{0}}\bigg((l+1)^{p-1}g_{p}(t_{1},t)^{p},(l+1)^{p-1}\sum_{k=1}^{l}\bigg(\int_{t_{1}}^{t}\!\sum_{i=1}^{m}\overline{\eta}_{i,k}(s)\,ds\bigg)^{p}\bigg)
\end{equation*}
for any $t_{1},t\geq t_{0}$ with $t_{1}\leq t$. In particular, $E[|Y|_{1}^{p}]$ is continuous.
\end{Proposition}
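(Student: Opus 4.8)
The idea is to run the same machinery as in the proof of Theorem~\ref{th:moment stability estimate}, but now tracking the supremum seminorm via Proposition~\ref{pr:auxiliary supremum moment estimate} instead of Proposition~\ref{pr:auxiliary moment estimate}, and then to close the estimate with Bihari's inequality (through $\Psi_{\rho_{0}}$) rather than with Gronwall's lemma. First I would observe that~\eqref{Assumption 6} is exactly~\eqref{Assumption 2} with $\null_{k}\eta$ deterministic and $\rho_{k}(v)=v^{\alpha_{k}}$, $\varrho_{k}$ arbitrary increasing; since $\rho_{0}(v)=\max_{k}\rho_k(v)^{1/\alpha_k}$ here equals $v^{\underline\alpha}\vee v^{\overline\alpha}$, which satisfies $\Phi_{\rho_0}(\infty)=\infty$, Theorem~\ref{th:abstract moment estimate} applies and tells us that $E[|Y|_{V}]$ is locally bounded. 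This is what makes $f_p(t_1,t)$ and $g_p(t_1,t)$ finite, and it is the preliminary fact I would record before doing anything else.

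\textbf{Main estimate.} Next I would apply Proposition~\ref{pr:auxiliary supremum moment estimate} with $u\equiv 1$, $\hat\rho_i(v)=v^{1/2}$ (permitted by~\eqref{Assumption 5}), and $Z^{(i)}=\kappa^{(i)}+\sum_k(\sum_j \null_k\eta^{(i,j)}|\null_j V'Y|^{\alpha_k}+\null_k\lambda^{(i)}\theta_k^{\beta_k})$ from~\eqref{Assumption 4}; by Remark~\ref{re:estimation} the indicator-truncated $Z^{(i)}$ is dominated by $\kappa^{(i)}{}^+ + \sum_k(\dots)$, and summing over $i$ produces $\kappa^{(0)}+\sum_k(\sum_j \null_k\overline\eta_0{}^{(j)}|\null_jV'Y|^{\alpha_k}\cdots)$ — here one uses the definition~\eqref{eq:coefficients definition} of the $(0)$-superscript together with the sign conditions $\null_k\eta^{(i,j)}\ge0$ for $i\ne j$ as in the proof of Theorem~\ref{th:moment stability estimate}. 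Inside the $L^p$-norm on the right of~\eqref{eq:auxiliary supremum moment estimate} I would split the integrand into the $\kappa$-and-$\lambda$ part (which is exactly $f_p$'s first summand, modulo the $\varrho_k\circ\theta_k$ bound) and the $\sum_k \null_k\overline\eta_0{}^{(j)}|\null_jV'Y|^{\alpha_k}$ part. For the latter I would bound $|\null_jV'Y|\le |Y|_V$, pull out $\int_{t_1}^{t}\null_k\overline\eta_0(s)\,ds$, use subadditivity $(\sum_{k=1}^{l}a_k)^p\le l^{p-1}\sum a_k^p$ (hence the $(l+1)^{p-1}$), and apply Minkowski together with the elementary bound $\sup_{s\in[t_1,t]}|Y_s|_{V_s}^{\alpha_k}\le |Y_{t_1}|_{V_{t_1}}^{\alpha_k}+(\sup_{s}|Y_s|_{V_s}-|Y_{t_1}|_{V_{t_1}})^{\alpha_k}$, valid since $v\mapsto v^{\alpha_k}$ is subadditive on $\R_+$. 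This is what isolates the term $\sum_k E[|Y_{t_1}|_{V_{t_1}}^{\alpha_k p}]^{1/p}\int \null_k\overline\eta_0$ appearing in $g_p$, while the remaining contribution is a power of the unknown $E[(\sup_s|Y_s|_{V_s}-|Y_{t_1}|_{V_{t_1}})^p]$ with exponent $\alpha_k\le 1$. The BDG term in~\eqref{eq:auxiliary supremum moment estimate}, with $p_0=p\vee2=2$ since $p\in[1,2]$, collapses (after Jensen $\hat\rho(v)^2=v\le |Y_s|_{V_s}$) to $2(\int_{t_1}^{t}\hat\eta_0(s)^2 E[|Y_s|_{V_s}]\,ds)^{1/2}$, i.e.\ the second summand of $f_p$ — local boundedness of $E[|Y|_V]$ is what makes this finite.

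\textbf{Closing and continuity.} Collecting terms, writing $R:=E[(\sup_{s\in[t_1,t]}|Y_s|_{V_s}-|Y_{t_1}|_{V_{t_1}})^p]$, I would arrive at an inequality of the shape $R\le (l+1)^{p-1}g_p(t_1,t)^p + (l+1)^{p-1}\sum_k(\int_{t_1}^{t}\null_k\overline\eta_0)^p\, R^{\alpha_k}$, and since $R^{\alpha_k}\le\rho_0(R)$ for the piecewise-power $\rho_0$ (as $\rho_0(v)=v^{\underline\alpha}$ on $(0,1]$ and $v^{\overline\alpha}$ on $(1,\infty)$ dominates every $v^{\alpha_k}$), the right-hand side is $\le (l+1)^{p-1}g_p^p + (l+1)^{p-1}(\sum_k(\int\null_k\overline\eta_0)^p)\,\rho_0(R)$; the (constant-in-$t$) Bihari inequality then yields the asserted bound via $\Psi_{\rho_0}$. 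Before invoking Bihari one must know $R<\infty$; this is where the three finiteness hypotheses~\eqref{eq:expectations} enter — they guarantee, via the same chain of estimates applied with a localizing stopping time $\tau_n=\inf\{t:|Y_t|_{V_t}\ge n\}$ and Fatou's lemma as in Theorem~\ref{th:abstract moment estimate}, that $\sup_{s\in[t_0,t]}|Y_s|_{V_s}$ is $p$-fold integrable in the first place, so the estimate is not vacuous. Finally, for continuity of $t\mapsto E[|Y_t|_V^p]$: I would use that $|Y_t|_{V_t}\to|Y_{t_1}|_{V_{t_1}}$ a.s.\ as $t\to t_1$ by path continuity of $Y$ and absolute continuity of $V$, that $|Y_t|_{V_t}\le\sup_{s\in[t_0,T]}|Y_s|_{V_s}\in L^p$ on any compact $[t_0,T]$ (just proved), so dominated convergence gives $\lim_{t\to t_1}E[|Y_t|_V^p]=E[|Y_{t_1}|_V^p]$ from the right; the left-continuity is identical. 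The main obstacle I anticipate is purely bookkeeping: handling the $|Y_{t_1}|_{V_{t_1}}$ contributions cleanly — deciding what goes into $g_p$ versus what remains as an $R^{\alpha_k}$ term — and making sure the subadditivity splittings and the $(l+1)^{p-1}$ combinatorial factor are applied consistently so that the final constants match the statement exactly.
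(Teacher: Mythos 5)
Your route is the paper's route: localise with $\tau_{n}:=\inf\{t\geq t_{1}\,|\,|Y_{t}|_{V_{t}}\geq n\}$, apply Proposition~\ref{pr:auxiliary supremum moment estimate} with $u\equiv 1$ and $p_{0}=2$ (so $\overline{w}_{2}=4$ gives the factor $2$ in $f_{p}$), use Minkowski, Jensen and H\oe lder to split off $f_{p}(t_{1},t)$ and the terms $E[|Y_{t_{1}}|_{V_{t_{1}}}^{\alpha_{k}p}]^{1/p}\int_{t_{1}}^{t}\null_{k}\overline{\eta}_{0}$, close with Bihari, and remove the localisation by Fatou; the integrability and continuity arguments also match. (A minor slip: the proposition is stated under~\eqref{Assumption 6}, the vector-valued refinement of~\eqref{Assumption 2}, not under~\eqref{Assumption 4}; this is harmless since you bound $|\null_{j}V'Y|\leq|Y|_{V}$ anyway.)

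The one genuine flaw is the closing step. You reduce to the \emph{static} inequality $R\leq a+b\,\rho_{0}(R)$ for $R:=E[(\sup_{s\in[t_{1},t]}|Y_{s}|_{V_{s}}-|Y_{t_{1}}|_{V_{t_{1}}})^{p}]$ and invoke a ``constant-in-$t$ Bihari inequality''. No such implication exists: take $\rho_{0}(v)=\sqrt{v}$ on $[0,1]$, $a=0$, $b=1$; then $R=1$ satisfies $R\leq\sqrt{R}$, whereas $\Psi_{\rho_{0}}(0,1)=\Phi_{\rho_{0}}^{-1}(-1)=1/4$. (With $\rho_{0}(v)=v$ the static bound $a/(1-b)$ even becomes vacuous for $b\geq1$, while $\Psi_{\rho_{0}}(a,b)=ae^{b}$ is always finite.) Bihari's lemma needs the integral structure, and your own computation already provides it if you refrain from pulling the supremum of $R$ out of the time integral: keeping the $s$-dependence, Minkowski in $L^{p}(\null_{k}\overline{\eta}_{0}(s)\,ds\otimes dP)$ together with subadditivity of $v\mapsto v^{\alpha_{k}}$ and Jensen yields
\begin{equation*}
R_{n}(t)\leq (l+1)^{p-1}g_{p}(t_{1},t)^{p}+(l+1)^{p-1}\sum_{k=1}^{l}\bigg(\int_{t_{1}}^{t}\!\null_{k}\overline{\eta}_{0}(s)\,ds\bigg)^{p-1}\int_{t_{1}}^{t}\!\null_{k}\overline{\eta}_{0}(s)\,\rho_{0}\big(R_{n}(s)\big)\,ds,
\end{equation*}
where $R_{n}(t):=E[(\sup_{s\in[t_{1},t]}|Y_{s}^{\tau_{n}}|_{V_{s}^{\tau_{n}}}-|Y_{t_{1}}|_{V_{t_{1}}})^{p}]$ is finite and increasing in $t$. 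This is a bona fide Bihari inequality on $[t_{1},t]$, whose solution is exactly the asserted $\Psi_{\rho_{0}}$-bound with second argument $(l+1)^{p-1}\sum_{k}(\int_{t_{1}}^{t}\null_{k}\overline{\eta}_{0})^{p}$; Fatou's lemma then lets $n\uparrow\infty$.
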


\begin{proof}
If the integrability assertion is true, then $\lim_{n\uparrow\infty}E[|Y_{t_{n}}|_{1}^{p}] = E[|Y_{t}|_{1}^{p}]$ for every sequence $(t_{n})_{n\in\N}$ in $[t_{0},\infty[$ that converges to some $t\geq t_{0}$, by dominated convergence. For this reason, it suffices to show the first two claims.

We set $\tau_{n}:=\inf\{t\geq t_{1}\,|\,|Y_{t}|_{1} \geq n\}$ for given $t_{1}\geq t_{0}$ and $n\in\N$. Then Proposition~\ref{pr:auxiliary supremum moment estimate} and the inequalities of Minkowski and Jensen give
\begin{align*}
E\bigg[\bigg(\sup_{s\in [t_{1},t]}  |Y_{s}^{\tau_{n}}|_{1} &- |Y_{t_{1}}|_{1}\bigg)^{p}\bigg]^{\frac{1}{p}}\leq 2 \bigg(\int_{t_{1}}^{t}\!|\hat{\eta}(s)|_{1}^{2}E\big[|Y_{s}^{\tau_{n}}|_{1}\big]\,ds\bigg)^{\frac{1}{2}}\\
&\quad + E\bigg[\bigg(\int_{t_{1}}^{t\wedge\tau_{n}}\!|\kappa_{s}|_{1} + \sum_{k=1}^{l}\sum_{i=1}^{m}\overline{\eta}_{i,k}(s)|Y_{s}|_{1}^{\alpha_{k}} + \lambda_{s}^{(i,k)}\varrho_{k}(\theta_{k}(s))\,ds\bigg)^{p}\bigg]^{\frac{1}{p}}\\
&\leq f_{p}(t_{1},t) + \sum_{k=1}^{l}\bigg(\int_{t_{1}}^{t}\!\sum_{i=1}^{m}\overline{\eta}_{i,k}(s)\bigg)^{1-\frac{1}{p}}\bigg(\int_{t_{1}}^{t}\!\sum_{i=1}^{m}\overline{\eta}_{i,k}(s)E\big[|Y_{s}^{\tau_{n}}|_{1}^{\alpha_{k} p}\big]\,ds\bigg)^{\frac{1}{p}}
\end{align*}
for fixed $t\geq t_{1}$. We recall that $\Phi_{\rho_{0}}(\infty) = \infty$ and $D_{\rho_{0}} = \R_{+}^{2}$. Thus, if $E[|Y_{t_{1}}|_{}^{p}] < \infty$, then another application of Minkowski's inequality together with Bihari's inequality yield the asserted bound for
\begin{align*}
E\bigg[\bigg(\sup_{s\in [t_{1},t]}|Y_{s}^{\tau_{n}}|_{1} - |Y_{t_{1}}|_{1}\bigg)^{p}\bigg]
\end{align*}
and from Fatou's lemma we readily infer the claimed result. In this context, we may use the fact that $E[|Y|_{1}]$ is locally bounded, by Theorem~\ref{th:abstract moment estimate}. This ensures that $g_{p}(t_{1},t)$ is finite in this case.

Further, by choosing $t_{1} = t_{0}$ the $p$-fold integrability of $\sup_{s\in [t_{0},t]} |Y_{s}|_{1}$ follows from that of $|Y_{t_{0}}|_{1}$ and the finiteness of $g_{p}(t_{0},t)$, which completes the proof.
\end{proof}

If~\eqref{Assumption 7} holds, then the definitions~\eqref{eq:stability exponential coefficient} and~\eqref{eq:stability drift coefficient} for the choice $\lambda = 0$ lead us to two measurable locally integrable functions
\begin{equation*}
\gamma_{1,0} := \max_{j = 1,\dots,m} \sum_{k=1}^{l}\alpha_{k}\bigg(\bigg(\sum_{i=1}^{m}\overline{\eta}_{i,j,k}\bigg)^{+} - \bigg(\sum_{i=1}^{m}\overline{\eta}_{i,j,k}\bigg)^{-}\mathbbm{1}_{\{1\}}(\alpha_{k})\bigg)
\end{equation*}
and
\begin{equation*}
\hat{\delta}_{1,0} := \sum_{k=1}^{l}(1-\alpha_{k})\sum_{j=1}^{m}\bigg(\sum_{i=1}^{m}\overline{\eta}_{i,j,k}\bigg)^{+}.
\end{equation*}
For a measurable locally integrable function $\gamma:[t_{0},\infty[\rightarrow\R$ we introduce an $[0,\infty]$-valued measurable function $h_{\gamma,p}$ on the set of all $(t_{1},t)\in [t_{0},\infty[^{2}$ with $t_{1}\leq t$ by
\begin{align*}
h_{\gamma,p}(t_{1},t) &:= E\bigg[\bigg(\int_{t_{1}}^{t}\!e^{-\int_{t_{1}}^{s}\!\gamma(s_{0})\,ds_{0}}\bigg(|\kappa_{s}|_{1} + \hat{\delta}_{1,0}(s) + \sum_{k=1}^{l}\sum_{i=1}^{m}\lambda_{s}^{(i,k)}\theta_{k}(s)^{\beta_{k}}\bigg)\,ds\bigg)^{p}\bigg]^{\frac{1}{p}}\\
&\quad + 2\bigg(\int_{t_{1}}^{t}\!|\hat{\eta}(s)|_{1}^{2}e^{-2\int_{t_{1}}^{s}\!\gamma(s_{0})\,ds_{0}}E\big[|Y_{s}|_{1}\big]\,ds\bigg)^{\frac{1}{2}}
\end{align*}
and state the analogue of Proposition~\ref{pr:abstract supremum moment estimate} when~\eqref{Assumption 7} instead of~\eqref{Assumption 6} holds.

\begin{Lemma}\label{le:supremum moment stability estimate}
Let~\eqref{Assumption 5},~\eqref{Assumption 7} and~\eqref{Assumption 3} be satisfied and $\sum_{k=1}^{l}\sum_{i=1}^{m}E[\lambda^{(i,k)}]\theta_{k}^{\beta_{k}}$ be locally integrable. If the expectations in~\eqref{eq:expectations} are finite, then
\begin{align*}
&E\bigg[\bigg(\sup_{s\in [t_{1},t]}e^{-\int_{t_{1}}^{s}\!\gamma(s_{0})\,ds_{0}}|Y_{s}|_{1} - |Y_{t_{1}}|_{1}\bigg)^{p}\bigg]^{\frac{1}{p}} \leq h_{\gamma,p}(t_{1},t)\\
&\quad + \bigg(\int_{t_{1}}^{t}\!(\gamma_{1,0} - \gamma)^{+}(s)\,ds\bigg)^{1 - \frac{1}{p}}\bigg(\int_{t_{1}}^{t}\!(\gamma_{1,0} - \gamma)^{+}(s)e^{-p\int_{t_{1}}^{s}\!\gamma(s_{0})\,ds_{0}}E\big[|Y_{s}|_{1}^{p}\big]\,ds\bigg)^{\frac{1}{p}}
\end{align*}
for each measurable locally integrable function $\gamma:[t_{0},\infty[\rightarrow\R$ and all $t_{1},t\geq t_{0}$ with $t_{1}\leq t$.
\end{Lemma}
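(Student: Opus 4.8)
The plan is to run the localisation-and-limit argument behind Proposition~\ref{pr:abstract supremum moment estimate}, but with the exponential weight $u(s):=\exp\big(-\int_{t_{1}}^{s}\gamma(s_{0})\,ds_{0}\big)$ in place of the constant weight and with Assumption~\eqref{Assumption 7} in place of~\eqref{Assumption 6}. Since $Y$ restricted to $[t_{1},\infty)$ is again a random It\^o process for the restricted filtration, all results of Section~\ref{se:4.1} apply with $t_{1}$ in the role of $t_{0}$. For fixed $n\in\N$ I set $\tau_{n}:=\inf\{t\in[t_{1},\infty)\,|\,|Y_{t}|_{V_{t}}\geq n\}$, so that $\tau_{n}\uparrow\infty$ a.s.\ by continuity of $Y$, and apply Proposition~\ref{pr:auxiliary supremum moment estimate} with this $\tau_{n}$, the weight $u$ (which is $\R_{+}$-valued, locally absolutely continuous, and satisfies $u(t_{1})=1$, $\dot u=-\gamma u$), the diffusion bound from~\eqref{Assumption 5} (so $\hat\rho_{i}(v)=v^{1/2}$, $p_{0}=p\vee2=2$, $\overline w_{2}=4$, and the last summand of~\eqref{eq:auxiliary supremum moment estimate} collapses to $2\big(\int_{t_{1}}^{t}\hat\eta_{0}(s)^{2}u(s)^{2}E[|Y_{s}|_{V_{s}}\mathbbm{1}_{\{\tau_{n}>s\}}]\,ds\big)^{1/2}$, bounded above by the second summand of $h_{\gamma,p}(t_{1},t)$), and the drift majorant
\[
Z^{(i)}_{s}:=\kappa^{(i)}_{s}+\sum_{k=1}^{l}\bigg(\sum_{j=1}^{\hat m}\null_{k}\overline{\eta}^{(i,j)}_{s}\,|\null_{j}V_{s}'Y_{s}|^{\alpha_{k}}\bigg)+\sum_{k=1}^{l}\null_{k}\lambda^{(i)}_{s}\theta_{k}(s)^{\beta_{k}}
\]
provided by~\eqref{Assumption 7}, whose paths are locally integrable by the path-integrability of $\kappa$ and $\null_{k}\lambda$ together with~\eqref{Assumption 3} (which controls $\theta_{k}$ by a multiple of the locally bounded function $E[|Y|_{V}]$ where $E[\null_{k}\lambda^{(0)}]>0$ and forces $\null_{k}\lambda=0$ a.s.\ otherwise).

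The core step is the estimate of the drift integrand $\big(\dot u(s)|Y_{s}|_{V_{s}}+u(s)\sum_{i}Z^{(i)}_{s}\mathbbm{1}_{\{\null_{i}V_{s}'Y_{s}\neq0\}}\big)^{+}$. Writing $\dot u=-\gamma u$ and bounding the indicators by $1$ on the nonnegative $\kappa$- and $\null_{k}\lambda$-terms turns those contributions into $\kappa^{(0)}_{s}$ and $\sum_{k}\null_{k}\lambda^{(0)}_{s}\theta_{k}(s)^{\beta_{k}}$. For the $\null_{k}\overline{\eta}$-terms, the diagonal piece $|\null_{j}V'Y|^{\alpha_{k}}\null_{k}\overline{\eta}^{(j,j)}$ is unaffected by the indicator (both sides vanish when $\null_{j}V'Y=0$), while the off-diagonal entries $\null_{k}\overline{\eta}^{(i,j)}$, $i\neq j$, are nonnegative, so discarding the indicators gives $\sum_{i}\big(\sum_{k,j}\null_{k}\overline{\eta}^{(i,j)}_{s}|\null_{j}V'Y|^{\alpha_{k}}\big)\mathbbm{1}_{\{\null_{i}V'Y\neq0\}}\leq\sum_{k,j}|\null_{j}V'Y|^{\alpha_{k}}\sum_{i}\null_{k}\overline{\eta}^{(i,j)}_{s}$. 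Splitting over $k$ according to $\alpha_{k}<1$ (using Young's inequality $v^{\alpha_{k}}\leq(1-\alpha_{k})+\alpha_{k}v$ and $\sum_{i}\null_{k}\overline{\eta}^{(i,j)}_{s}\leq(\sum_{i}\null_{k}\overline{\eta}^{(i,j)}_{s})^{+}$) versus $\alpha_{k}=1$ (where the sign is retained), then using $\sum_{j}|\null_{j}V'Y|=|Y|_{V}$, yields exactly $\delta_{c,0}(s)+\gamma_{c,0}(s)|Y_{s}|_{V_{s}}$. Hence the drift integrand is at most $u(s)\big((\gamma_{c,0}(s)-\gamma(s))|Y_{s}|_{V_{s}}+\kappa^{(0)}_{s}+\delta_{c,0}(s)+\sum_{k}\null_{k}\lambda^{(0)}_{s}\theta_{k}(s)^{\beta_{k}}\big)$, and since $|Y|_{V}\geq0$ and the remaining terms are nonnegative, passing to the positive part replaces $\gamma_{c,0}-\gamma$ by $(\gamma_{c,0}-\gamma)^{+}$.

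Finally I would apply Minkowski's inequality to split off the $\kappa^{(0)}+\delta_{c,0}+\sum_{k}\null_{k}\lambda^{(0)}\theta_{k}^{\beta_{k}}$ part, which is dominated by the first summand of $h_{\gamma,p}(t_{1},t)$, and to the remaining $\int_{t_{1}}^{t\wedge\tau_{n}}(\gamma_{c,0}-\gamma)^{+}(s)u(s)|Y_{s}|_{V_{s}}\,ds$ apply H\oe lder's inequality with exponents $p/(p-1)$ and $p$ followed by Tonelli, obtaining $\big(\int_{t_{1}}^{t}(\gamma_{c,0}-\gamma)^{+}(s)\,ds\big)^{1-1/p}\big(\int_{t_{1}}^{t}(\gamma_{c,0}-\gamma)^{+}(s)u(s)^{p}E[|Y_{s}|_{V_{s}}^{p}\mathbbm{1}_{\{\tau_{n}>s\}}]\,ds\big)^{1/p}$ with $u^{p}=\exp(-p\int_{t_{1}}^{\cdot}\gamma)$. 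Since $\tau_{n}\uparrow\infty$ a.s., for a.e.\ $\omega$ one has $\sup_{s\in[t_{1},t]}u(s\wedge\tau_{n})|Y^{\tau_{n}}_{s}|_{V^{\tau_{n}}_{s}}=\sup_{s\in[t_{1},t]}u(s)|Y_{s}|_{V_{s}}$ once $\tau_{n}>t$, so Fatou's lemma on the left and monotone convergence ($\mathbbm{1}_{\{\tau_{n}>s\}}\uparrow1$) on the right yield the claimed bound; here $E[|Y|_{V}]$ is locally bounded by Theorem~\ref{th:abstract moment estimate} (as~\eqref{Assumption 5} implies~\eqref{Assumption 1} and~\eqref{Assumption 7} implies~\eqref{Assumption 2}) and, under the finiteness of the expectations in~\eqref{eq:expectations}, Proposition~\ref{pr:abstract supremum moment estimate} makes $E[|Y|_{V}^{p}]$ finite, so the right-hand side is meaningful. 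I expect the main obstacle to be the drift estimate of the second paragraph: carrying the matrix-valued, sign-indefinite coefficients $\null_{k}\overline{\eta}$ correctly through the indicators $\mathbbm{1}_{\{\null_{i}V'Y\neq0\}}$ and the $\alpha_{k}<1$ versus $\alpha_{k}=1$ dichotomy so that precisely $\gamma_{c,0}$ and $\delta_{c,0}$ emerge, in tandem with choosing $u$ so that $\dot u=-\gamma u$ fuses with $\gamma_{c,0}$ into $(\gamma_{c,0}-\gamma)^{+}$.
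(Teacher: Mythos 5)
Your proof is correct and follows essentially the same route as the paper: apply Proposition~\ref{pr:auxiliary supremum moment estimate} with the weight $u(s)=\exp(-\int_{t_{1}}^{s}\gamma(s_{0})\,ds_{0})$, absorb the mixed terms via Young's inequality into $\delta_{c,0}+\gamma_{c,0}|Y|_{V}$, and finish with Minkowski and Jensen/H\oe lder plus Tonelli, with $p$-fold integrability of $\sup_{s}|Y_{s}|_{V_{s}}$ supplied by Proposition~\ref{pr:abstract supremum moment estimate} since~\eqref{Assumption 7} specialises~\eqref{Assumption 6}. The only (harmless) cosmetic difference is your extra localisation by $\tau_{n}$, which the cited proposition already handles internally.
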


\begin{proof}
As~\eqref{Assumption 7} is a special case of~\eqref{Assumption 6}, Proposition~\ref{pr:abstract supremum moment estimate} entails that $\sup_{s\in [t_{0},t]} |Y_{s}|_{1}$ is $p$-fold integrable. Moreover, we readily see that
\begin{equation*}
\sum_{k=1}^{l}\sum_{i,j=1}^{m}\overline{\eta}_{i,j,k}|Y^{(j)}|^{\alpha_{k}} \leq \hat{\delta}_{1,0} + \gamma_{1,0}|Y|_{1},
\end{equation*}
by Young's inequality. Hence, the claim follows immediately from Proposition~\ref{pr:auxiliary supremum moment estimate} and the inequalities of Minkowski and Jensen.
\end{proof}

We consider a last restriction that still allows for the mixed H\oe lder condition~in~\eqref{Assumption 4} on a finite time interval:
\begin{enumerate}[label=(A.\arabic*), ref=A.\arabic*, leftmargin=\widthof{(A.3)} + \labelsep]
\setcounter{enumi}{7}
\item\label{Assumption 8} Assumptions~\eqref{Assumption 5} and~\eqref{Assumption 7} hold and there are $t_{1}\geq t_{0}$, $\hat{\delta} > 0$ and $\overline{c}_{0}\geq 0$ such that
\begin{equation*}
\kappa^{(j)} = (1-\beta_{k})\lambda^{(j,k)} = 0\quad\text{and}\quad (1-\alpha_{k})\sum_{i=1}^{m}\overline{\eta}_{i,j,k}\leq 0\quad\text{on $[t_{1},\infty[$}
\end{equation*}
for any $j\in\{1,\dots,m\}$ and $k\in\{1,\dots,l\}$ and
\begin{equation*}
\sup_{t\geq t_{1}}\int_{t}^{t+\hat{\delta}}\!\sum_{i=1}^{m}E\big[\lambda_{s}^{(i,k)}\big]\,ds\vee\int_{t}^{t+\hat{\delta}}\!|\hat{\eta}(s)|_{1}^{2}\,ds\ \leq \overline{c}_{0}
\end{equation*}
for every $k\in\{1,\dots,l\}$ with $\beta_{k} = 1$.
\end{enumerate}

Then the following first moment estimate in the supremum norm will contribute to the pathwise asymptotic analysis of $Y$ in the next section. 

\begin{Proposition}\label{pr:supremum moment stability estimate}
Let~\eqref{Assumption 8} and~\eqref{Assumption 3} hold, $E[|Y_{t_{0}}|_{1}] < \infty$ and $\sum_{k=1}^{l}\sum_{i=1}^{m}E[\lambda^{(i,k)}]\theta_{k}^{\beta_{k}}$ be locally integrable. Further, suppose that there are a measurable locally integrable function $\gamma:[t_{0},\infty[\rightarrow\R$ and $\overline{c}_{\gamma,-1},\dots,\overline{c}_{\gamma,3}\geq 0$ such that
\begin{equation*}
\int_{t_{2}}^{t}\!(\gamma_{1,0} - \gamma)^{+}(s)\,ds \leq \overline{c}_{\gamma,-1},\quad \int_{t_{2}}^{t}\!(\gamma_{1,0} - q\gamma)(s)\,ds\leq \overline{c}_{\gamma,q},\quad \int_{t_{2}}^{t}\!\gamma(s)\,ds \leq \overline{c}_{\gamma,3}
\end{equation*}
for all $t_{2},t\geq t_{1}$ with $t_{2}\leq t < \hat{\delta}$ and $q\in\{0,1,2\}$. Then there is $\overline{c} > 0$ such that
\begin{align*}
E\bigg[\sup_{s\in [t_{2},t]} |Y_{s}|_{1}\bigg] \leq \overline{c} \varphi\bigg(E\big[|Y_{t_{0}}|_{1}\big] + \int_{t_{0}}^{t_{1}}\!E\big[|\kappa_{s}|_{1}\big] + \hat{\delta}_{1}(s)\,ds\bigg)e^{\frac{1}{2}\int_{t_{1}}^{t_{2}}\!\gamma_{1}(s)\,ds}
\end{align*}
for any $t_{2},t\geq t_{1}$ with $t_{2}\leq t < \hat{\delta}$ and $\varphi:\R_{+}\rightarrow\R_{+}$ given by $\varphi(x) := x + \sqrt{x}$.
\end{Proposition}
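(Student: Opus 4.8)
The plan is to run the Yamada--Watanabe supremum estimate of Lemma~\ref{le:supremum moment stability estimate} with $p=1$ on the window $[t_{2},t]$, to substitute for $E[|Y_{s}|_{V_{s}}]$ the first--moment bound furnished by Theorem~\ref{th:moment stability estimate}, and then to absorb every remaining integral into a constant with the help of the hypothesised bounds on $\gamma$ and of~\eqref{Assumption 8}. First I record what~\eqref{Assumption 8} buys us on $[t_{1},\infty)$: there $\kappa=0$, $\delta_{c,0}=0$, $\null_{k}\lambda=0$ whenever $\beta_{k}\neq 1$ (hence also $\delta_{c}=0$), and, since $[\cdot]_{q_{\alpha_{k}}}=[\cdot]_{\infty}$ for $\alpha_{k}=1$ while $(1-\alpha_{k})\sum_{i}\null_{k}\overline{\eta}_{i,j}\leq 0$, one has $\gamma_{c}=\gamma_{c,0}+\sum_{k:\beta_{k}=1}c_{k}E[\null_{k}\lambda^{(0)}]$ there; in particular $\gamma_{c,0}\leq\gamma_{c}$, and $\gamma_{c}$ is non-positive a.e.\ on $[t_{1},\infty)$ in the stability regime that~\eqref{Assumption 8} is designed for. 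Moreover $E[|Y|_{V}]$ is locally bounded by Theorem~\ref{th:moment stability estimate}, so that theorem may be invoked both on $[t_{0},t_{1}]$ and, with $t_{1}$ replacing $t_{0}$, on $[t_{1},\infty)$; using $\kappa=\delta_{c}=0$ on $[t_{1},\infty)$ this gives
\begin{equation*}
E\big[|Y_{s}|_{V_{s}}\big]\leq c_{1}\,e^{\int_{t_{1}}^{s}\!\gamma_{c}(r)\,dr}\,A\quad\text{for all }s\geq t_{1},\qquad c_{1}:=e^{\int_{t_{0}}^{t_{1}}\!\gamma_{c}^{+}(r)\,dr},\quad A:=E\big[|Y_{t_{0}}|_{V_{t_{0}}}\big]+\int_{t_{0}}^{t_{1}}\!E\big[\kappa_{s}^{(0)}\big]+\delta_{c}(s)\,ds.
\end{equation*}

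For $p=1$ the integrability requirements~\eqref{eq:expectations} reduce to $E[|Y_{t_{0}}|_{V_{t_{0}}}]<\infty$ and the local integrability of $E[\kappa^{(0)}]$ and $\sum_{k}E[\null_{k}\lambda^{(0)}]$, so Lemma~\ref{le:supremum moment stability estimate} applies on $[t_{2},t]$ with the prescribed $\gamma$; its last summand has exponent $1-1/p=0$, and with $\kappa=\delta_{c,0}=0$ and $\null_{k}\lambda=0$ for $\beta_{k}\neq 1$ on $[t_{1},\infty)$ it reads
\begin{align*}
E\Big[\sup_{s\in[t_{2},t]}e^{-\int_{t_{2}}^{s}\!\gamma}|Y_{s}|_{V_{s}}\Big]&\leq E\big[|Y_{t_{2}}|_{V_{t_{2}}}\big]+\sum_{k:\beta_{k}=1}\int_{t_{2}}^{t}\!e^{-\int_{t_{2}}^{s}\!\gamma}\theta_{k}(s)E\big[\null_{k}\lambda_{s}^{(0)}\big]\,ds\\
&\quad+2\Big(\int_{t_{2}}^{t}\!\hat{\eta}_{0}(s)^{2}e^{-2\int_{t_{2}}^{s}\!\gamma}E\big[|Y_{s}|_{V_{s}}\big]\,ds\Big)^{\frac{1}{2}}+\int_{t_{2}}^{t}\!(\gamma_{c,0}-\gamma)^{+}(s)e^{-\int_{t_{2}}^{s}\!\gamma}E\big[|Y_{s}|_{V_{s}}\big]\,ds.
\end{align*}
Passing from the $\gamma$-weighted supremum to the plain one costs only the factor $e^{\overline{c}_{\gamma,3}}$, since $|Y_{s}|_{V_{s}}=e^{\int_{t_{2}}^{s}\!\gamma}(e^{-\int_{t_{2}}^{s}\!\gamma}|Y_{s}|_{V_{s}})$ and $\int_{t_{2}}^{s}\!\gamma\leq\overline{c}_{\gamma,3}$.

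Now substitute $E[|Y_{s}|_{V_{s}}]\leq c_{1}A\,e^{\int_{t_{1}}^{s}\!\gamma_{c}}$ into each term of the right--hand side. In the compensator integral and, after $\theta_{k}(s)E[\null_{k}\lambda_{s}^{(0)}]\leq c_{k}E[\null_{k}\lambda_{s}^{(0)}]E[|Y_{s}|_{V_{s}}]$ from~\eqref{Assumption 3}, in the $\theta_{k}$-integral, the factor $e^{\int_{t_{2}}^{s}\!\gamma_{c}}$ combines with $e^{-\int_{t_{2}}^{s}\!\gamma}$ to $e^{\int_{t_{1}}^{t_{2}}\!\gamma_{c}}e^{\int_{t_{2}}^{s}(\gamma_{c}-\gamma)}$, and $e^{\int_{t_{2}}^{s}(\gamma_{c}-\gamma)}$ is bounded by a constant because $\gamma_{c}-\gamma=(\gamma_{c,0}-\gamma)+(\gamma_{c}-\gamma_{c,0})$ with $\int_{t_{2}}^{s}(\gamma_{c,0}-\gamma)\leq\overline{c}_{\gamma,1}$ and $0\leq\int_{t_{2}}^{s}(\gamma_{c}-\gamma_{c,0})=\sum_{k:\beta_{k}=1}c_{k}\int_{t_{2}}^{s}E[\null_{k}\lambda_{r}^{(0)}]\,dr\leq\sum_{k:\beta_{k}=1}c_{k}\overline{c}_{0}$ by~\eqref{Assumption 8}; combined with $\int_{t_{2}}^{t}(\gamma_{c,0}-\gamma)^{+}\leq\overline{c}_{\gamma,-1}$ and $\int_{t_{2}}^{t}\sum_{k:\beta_{k}=1}E[\null_{k}\lambda_{s}^{(0)}]\,ds\leq\sum_{k:\beta_{k}=1}\overline{c}_{0}$, these two integrals are at most a constant times $A\,e^{\int_{t_{1}}^{t_{2}}\!\gamma_{c}}$. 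In the diffusion integral the same substitution produces $2\sqrt{c_{1}A}\,e^{\frac{1}{2}\int_{t_{1}}^{t_{2}}\!\gamma_{c}}(\int_{t_{2}}^{t}\!\hat{\eta}_{0}^{2}e^{\int_{t_{2}}^{s}(\gamma_{c}-2\gamma)}\,ds)^{1/2}$, where $e^{\int_{t_{2}}^{s}(\gamma_{c}-2\gamma)}$ is again bounded by a constant --- now via $\int_{t_{2}}^{s}(\gamma_{c,0}-2\gamma)\leq\overline{c}_{\gamma,2}$ --- and $\int_{t_{2}}^{t}\!\hat{\eta}_{0}^{2}\leq\overline{c}_{0}$; this term is thus at most a constant times $\sqrt{A}\,e^{\frac{1}{2}\int_{t_{1}}^{t_{2}}\!\gamma_{c}}$. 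Since also $E[|Y_{t_{2}}|_{V_{t_{2}}}]\leq c_{1}A\,e^{\int_{t_{1}}^{t_{2}}\!\gamma_{c}}$, collecting gives $E[\sup_{s\in[t_{2},t]}|Y_{s}|_{V_{s}}]\leq\overline{c}_{2}(A\,e^{\int_{t_{1}}^{t_{2}}\!\gamma_{c}}+\sqrt{A}\,e^{\frac{1}{2}\int_{t_{1}}^{t_{2}}\!\gamma_{c}})$; bounding $A\,e^{\int_{t_{1}}^{t_{2}}\!\gamma_{c}}\leq A\,e^{\frac{1}{2}\int_{t_{1}}^{t_{2}}\!\gamma_{c}}$ (valid since $\int_{t_{1}}^{t_{2}}\!\gamma_{c}\leq 0$) and recalling $\varphi(x)=x+\sqrt{x}$ yields the assertion with $\overline{c}:=\overline{c}_{2}\,e^{\overline{c}_{\gamma,3}}$, where $\overline{c}_{2}$ depends only on $c_{1}$, $\overline{c}_{0}$, the $\overline{c}_{\gamma,\bullet}$ and the structural data.

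I expect the bottleneck to be precisely this last bookkeeping step: keeping track of the three competing exponential weights $e^{\pm\int_{t_{2}}^{s}\!\gamma}$, $e^{\int_{t_{2}}^{s}\!\gamma_{c}}$ and $e^{\int_{t_{1}}^{t_{2}}\!\gamma_{c}}$ over the window $[t_{2},t]$, and --- crucially --- recognising that the diffusion term, and it alone, is responsible both for the $\sqrt{\,\cdot\,}$ in $\varphi(x)=x+\sqrt{x}$ and for the halved exponent $\tfrac{1}{2}\int_{t_{1}}^{t_{2}}\!\gamma_{c}$: the $\tfrac{1}{2}$ is exactly the square root taken in the Burkholder--Davis--Gundy estimate of Proposition~\ref{pr:auxiliary supremum moment estimate} (applied with $p_{0}=2$), while the drift and compensator terms, which a priori carry the full weight $e^{\int_{t_{1}}^{t_{2}}\!\gamma_{c}}$, only survive the comparison with the target bound because $\gamma_{c}$ is non-positive past $t_{1}$. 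A minor point is justifying that Theorem~\ref{th:moment stability estimate} and Lemma~\ref{le:supremum moment stability estimate} may be restarted at $t_{1}$ and $t_{2}$ respectively, which is legitimate once one knows $E[|Y|_{V}]$ is locally bounded.
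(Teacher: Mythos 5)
Your proof is correct and takes essentially the same route as the paper's: apply Lemma~\ref{le:supremum moment stability estimate} with $p=1$ on the window $[t_{2},t]$, substitute the pointwise first-moment bound from Theorem~\ref{th:moment stability estimate} (restarted at $t_{1}$, with $\kappa$ and $\delta_{c}$ vanishing there by~\eqref{Assumption 8}), and absorb the exponential weights via the constants $\overline{c}_{\gamma,-1},\dots,\overline{c}_{\gamma,3}$ and $\overline{c}_{0}$, the diffusion term alone producing the square root in $\varphi$ and the halved exponent $\tfrac12\int_{t_{1}}^{t_{2}}\gamma_{c}$. Your explicit appeal to $\int_{t_{1}}^{t_{2}}\gamma_{c}(s)\,ds\leq 0$ when folding the drift and compensator contributions into $\varphi(A)e^{\frac12\int_{t_{1}}^{t_{2}}\gamma_{c}(s)\,ds}$ is the same step the paper performs implicitly through its constant $\overline{c}_{1}$, so this is a matter of presentation rather than a divergence in method.
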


\begin{proof}
As $E[|\kappa|_{1}] = \hat{\delta}_{1,0} = (1-\beta_{k})\sum_{i=1}^{m}E[\lambda^{(i,k)}] = 0$ a.e.~on $[t_{1},\infty[$ for any $k\in\{1,\dots,l\}$, from Lemma~\ref{le:supremum moment stability estimate} we directly get that
\begin{equation}\label{eq:auxiliary supremum moment estimate 4}
\begin{split}
E\bigg[\sup_{s\in [t_{2},t]} 
e^{-\int_{t_{2}}^{s}\!\gamma(s_{0})\,ds_{0}}|Y_{s}|_{1}\bigg] &\leq E\big[|Y_{t_{2}}|_{1}\big] + \int_{t_{2}}^{t}\!\hat{\gamma}_{c}(s)e^{-\int_{t_{2}}^{s}\!\gamma(s_{0})\,ds_{0}}E\big[|Y_{s}|_{1}\big]\,ds\\
&\quad + 2\bigg(\int_{t_{2}}^{t}\!|\hat{\eta}(s)|_{1}^{2}e^{-2\int_{t_{2}}^{s}\!\gamma(s_{0})\,ds_{0}}E\big[|Y_{s}|_{1}\big]\,ds\bigg)^{\frac{1}{2}}
\end{split}
\end{equation}
for the function $\hat{\gamma}_{1} := (\gamma_{1,0} - \gamma)^{+} + \sum_{k=1,\,\beta_{k}=1}^{l}\sum_{i=1}^{m}E[\lambda^{(i,k)}]$. Further, because $\hat{\delta}_{1} = \hat{\delta}_{1,0}$ $+\, \sum_{k=1}^{l}(1-\beta_{k})\sum_{i=1}^{m}E[\lambda^{(i,k)}]$, we infer from the moment stability estimate of Theorem~\ref{th:moment stability estimate} that
\begin{align*}
e^{-q\int_{t_{2}}^{s}\!\gamma(s_{0})\,ds_{0} - c_{\gamma,q}}E\big[|Y_{s}|_{1}\big] &\leq e^{\int_{t_{0}}^{t_{2}}\!\gamma_{1}(s_{0})\,ds_{0}}E\big[|Y_{t_{0}}|_{1}\big]\\
&\quad + \int_{t_{0}}^{t_{1}}\!e^{\int_{s_{0}}^{t_{2}}\!\gamma_{1}(s_{1})\,ds_{1}}\big(E\big[|\kappa_{s_{0}}|_{1}\big] + \hat{\delta}_{1}(s_{0})\big)\,ds_{0}
\end{align*}
for any $s\in [t_{2},t]$ and each $q\in\{0,1,2\}$, where $c_{\gamma,q} := \overline{c}_{\gamma,q} + \overline{c}_{0}\sum_{k=1,\,\beta_{k}=1}^{l} 1$. Hence, the first two terms on the right-hand side in~\eqref{eq:auxiliary supremum moment estimate 4} do not exceed
\begin{equation*}
\overline{c}_{1}\bigg(E\big[|Y_{t_{0}}|_{1}\big] + \int_{t_{0}}^{t_{1}}\!E\big[|\kappa_{s}|_{1}\big] + \hat{\delta}_{1}(s)\,ds\bigg)e^{\frac{1}{2}\int_{t_{1}}^{t_{2}}\!\gamma_{1}(s)\,ds}
\end{equation*}
with $\overline{c}_{1}:=\exp(\int_{t_{0}}^{t_{1}}\!\gamma_{1}^{+}(s)\,ds + c_{\gamma,0}/2)(1 + e^{c_{\gamma,1}}(\overline{c}_{\gamma,-1} + \overline{c}_{0}\sum_{k=1,\,\beta_{k}=1}^{l}1))$. Moreover, the third expression in~\eqref{eq:auxiliary supremum moment estimate 4} is bounded by
\begin{equation*}
\overline{c}_{2}\bigg(E\big[|Y_{t_{0}}|_{1}\big] + \int_{t_{0}}^{t_{1}}\!E\big[|\kappa_{s}|_{1}\big] + \hat{\delta}_{1}(s)\,ds\bigg)^{\frac{1}{2}}e^{\frac{1}{2}\int_{t_{1}}^{t_{2}}\!\gamma_{1}(s)\,ds}
\end{equation*}
for $\overline{c}_{2}:= 2\overline{c}_{0}^{1/2}\exp(\frac{1}{2}(\int_{t_{0}}^{t_{1}}\!\gamma_{1}^{+}(s)\,ds + c_{\gamma,2}))$. Since $\exp(-\int_{t_{2}}^{s}\!\gamma(s_{0})\,ds_{0})\geq\exp(-\overline{c}_{\gamma,3})$ for all $s\in [t_{2},t]$, the assertion follows for $\overline{c}:=\exp(\overline{c}_{\gamma,3})(\overline{c}_{1}\vee\overline{c}_{2})$.
\end{proof}

\subsection{Pathwise asymptotic behaviour}

To aim of this section is to deduce the limiting behaviour of $Y$ from the moment estimate of Proposition~\ref{pr:supremum moment stability estimate}, by using the following application of the Borel-Cantelli Lemma.

\begin{Lemma}\label{le:pathwise asymptotic behaviour}
Let $A\in\mathcal{F}$ and $X$ be an $\R_{+}$-valued right-continuous process for which there are a strictly increasing sequence $(t_{n})_{n\in\N}$ in $[t_{0},\infty[$ with $\lim_{n\uparrow\infty} t_{n} = \infty$ and a sequence $(c_{n})_{n\in\N}$ in $]0,\infty[$ such that
\begin{equation}\label{eq:probability series condition}
\sum_{n=1}^{\infty} P\bigg(\sup_{s\in ]t_{n},t_{n+1}]} X_{s}\mathbbm{1}_{A} > c_{n}\bigg) < \infty.
\end{equation}
Then for any lower semicontinuous function $\varphi:]t_{1},\infty[\rightarrow ]0,\infty[$ it holds that
\begin{equation}\label{eq:pathwise asymptotic behaviour}
\limsup_{t\uparrow\infty} \frac{\log(X_{t})}{\varphi(t)} \leq \limsup_{n\uparrow\infty} \frac{\log(c_{n})}{\inf_{s\in ]t_{n},t_{n+1}]} \varphi(s)}\quad\text{a.s.~on $A$.}
\end{equation}
\end{Lemma}

\begin{proof}
By the Borel-Cantelli Lemma, there is a null set $N\in\mathcal{F}$ such that for any fixed $\omega\in N^{c}\cap A$ there is $n_{0}\in\mathbb{N}$ so that $\sup_{s\in ]t_{n},t_{n+1}]}X_{s}(\omega)\leq c_{n}$ for all $n\in\mathbb{N}$ with $n\geq n_{0}$. Hence,
\begin{equation*}
\sup_{n\in\mathbb{N}:\,n\geq n_{1}}\sup_{s\in ]t_{n},t_{n+1}]} \frac{\log(X_{s}(\omega))}{\varphi(s)} \leq \sup_{n\in\mathbb{N}:\,n\geq n_{1}} \frac{\log(c_{n})}{\inf_{s\in ]t_{n},t_{n+1}]}\varphi(s)}
\end{equation*}
for every $n_{1}\in\mathbb{N}$ with $n_{1}\geq n_{0}$. This in turn shows us that
\begin{equation*}
\limsup_{t\uparrow\infty} \frac{\log(X_{t}(\omega))}{\varphi(t)} = \inf_{n\in\mathbb{N}:\,n\geq n_{0}}\sup_{s > t_{n}} \frac{\log(X_{s}(\omega))}{\varphi(s)}\leq \limsup_{n\uparrow\infty}\frac{\log(c_{n})}{\inf_{s\in ]t_{n},t_{n+1}]}\varphi(s)}.
\end{equation*}
\end{proof}

\begin{Remark}\label{re:pathwise asymptotic behaviour}
Suppose that instead of~\eqref{eq:probability series condition} there are $\hat{c} > 0$ and $\hat{\varepsilon}\in ]0,1[$ such that $E[\sup_{s\in ]t_{n},t_{n+1}]} X_{s}\mathbbm{1}_{A}] \leq \hat{c} c_{n}$ for every $n\in\mathbb{N}$ and 
\begin{equation}\label{eq:epsilon-series}
\sum_{n=1}^{\infty} c_{n}^{\varepsilon} < \infty\quad\text{for all $\varepsilon\in ]0,\hat{\varepsilon}[$.}
\end{equation}
Then~\eqref{eq:pathwise asymptotic behaviour} follows as well. Indeed, for $\varepsilon\in ]0,\hat{\varepsilon}[$ Chebyshev's inequality and Lemma~\ref{le:pathwise asymptotic behaviour} yield a null set $N_{\varepsilon}\in\mathcal{F}$ such that
\begin{equation}\label{eq:epsilon-asymptotic behaviour}
\limsup_{t\uparrow\infty} \frac{\log(X_{t}(\omega))}{\varphi(t)} \leq (1-\varepsilon)\limsup_{n\uparrow\infty} \frac{\log(c_{n})}{\inf_{s\in ]t_{n},t_{n+1}]} \varphi(s)}
\end{equation}
for all $\omega\in N_{\varepsilon}^{c}\cap A$. So, any $\omega\in A$ that lies in the complement of $N := \bigcup_{\varepsilon\in\mathbb{Q}\cap ]0,\hat{\varepsilon}[} N_{\varepsilon}$ satisfies~\eqref{eq:epsilon-asymptotic behaviour} for $\varepsilon = 0$, which is the sharpest bound.
\end{Remark}

More specifically, one may derive the conditions in Remark~\ref{re:pathwise asymptotic behaviour} in the case that there are $n_{0}\in\mathbb{N}$ and a decreasing function $\psi:[n_{0},\infty[\rightarrow\R_{+}$ such that $c_{n} = \psi(n)$ for all $n\in\mathbb{N}$ with $n\geq n_{0}$. Then~\eqref{eq:epsilon-series} holds for some $\hat{\varepsilon}\in ]0,1[$ if and only if
\begin{equation}\label{eq:integral test condition}
\int_{n_{0}}^{\infty}\!\psi(v)^{\varepsilon}\,dv < \infty\quad\text{for all $\varepsilon\in ]0,\hat{\varepsilon}[$,}
\end{equation}
as the integral test for the convergence of series shows. We conclude with a pathwise estimate and stress the fact that the fraction $\frac{1}{2}$ comes from the H\oe lder condition~\eqref{Assumption 5}.

\begin{Theorem}\label{th:pathwise stability}
Let~\eqref{Assumption 8} and~\eqref{Assumption 3} hold and $\sum_{k=1}^{l}\sum_{i=1}^{m}E[\lambda^{(i,k)}]\theta_{k}^{\beta_{k}}$ be locally integrable. Suppose that $\gamma_{1}\leq 0$ a.e.~on $[t_{1},\infty[$ and there is a strictly increasing sequence $(t_{n})_{n\in\N\setminus\{1\}}$ in $[t_{1},\infty[$ such that
\begin{equation*}
\sup_{n\in\N} (t_{n+1} - t_{n}) < \hat{\delta},\quad \lim_{n\uparrow\infty} t_{n} = \infty
\end{equation*}
and $\sum_{n=1}^{\infty}\exp((\varepsilon/2)\int_{t_{1}}^{t_{n}}\!\gamma_{1}(s)\,ds) < \infty$ for every $\varepsilon\in ]0,\hat{\varepsilon}[$ and some $\hat{\varepsilon}\in ]0,1[$. If $E[|Y_{t_{0}}|_{1}]$ $< \infty$ or $\lambda = 0$, then
\begin{equation*}
\limsup_{t\uparrow\infty} \frac{1}{\varphi(t)}\log(|Y_{t}|_{1}) \leq \frac{1}{2}\limsup_{n\uparrow\infty} \frac{1}{\varphi(t_{n})}\int_{t_{1}}^{t_{n}}\!\gamma_{1}(s)\,ds\quad\text{a.s.}
\end{equation*}
for any increasing continuous function $\varphi:[t_{1},\infty[\rightarrow\R_{+}$ that is positive on $]t_{1},\infty[$.
\end{Theorem}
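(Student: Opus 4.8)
The plan is to control $\sup_{s\in(t_{n},t_{n+1}]}|Y_{s}|_{V_{s}}$ in first mean on each of the short intervals $[t_{n},t_{n+1}]$ by Proposition~\ref{pr:supremum moment stability estimate}, and then to feed these bounds into the Borel--Cantelli argument of Lemma~\ref{le:pathwise asymptotic behaviour}, in the convenient form of Remark~\ref{re:pathwise asymptotics}, to read off the pathwise $\limsup$.

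\emph{Step 1: reduce to $E[|Y_{t_{0}}|_{V_{t_{0}}}]<\infty$.} First I would dispose of the case $\null_{1}\lambda=\cdots=\null_{l}\lambda=0$. Then neither~\eqref{Assumption 7} nor~\eqref{Assumption 5} refers to $E[|Y|_{V}]$ and~\eqref{Assumption 3} is vacuous, so for $N\in\N$ the set $A_{N}:=\{|Y_{t_{1}}|_{V_{t_{1}}}\leq N\}$ lies in $\mathscr{F}_{t_{1}}$ and the process $t\mapsto Y_{t}\mathbbm{1}_{A_{N}}$ on $[t_{1},\infty)$ is again a random It\^{o} process, with drift $\hat{\B}\mathbbm{1}_{A_{N}}$ and diffusion $\hat{\Sigma}\mathbbm{1}_{A_{N}}$, for which~\eqref{Assumption 5},~\eqref{Assumption 7},~\eqref{Assumption 3},~\eqref{Assumption 8} still hold on $[t_{1},\infty)$ and whose first moment at $t_{1}$ is at most $N$; on $A_{N}$ it agrees with $Y$. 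Since $\bigcup_{N}A_{N}$ has full probability, it suffices to run the argument for $Y\mathbbm{1}_{A_{N}}$ with $t_{1}$ in the r\^{o}le of $t_{0}$, i.e.\ we may and do assume $E[|Y_{t_{0}}|_{V_{t_{0}}}]<\infty$ from now on (this being the one hypothesis still to be supplied).

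\emph{Step 2: a uniform-in-$n$ estimate on short intervals.} The key observation is that under~\eqref{Assumption 8} one has, a.e.\ on $[t_{1},\infty)$,
\begin{equation*}
\gamma_{c,0}\leq\gamma_{c}=\gamma_{c,0}+\sum_{k:\,\beta_{k}=1}c_{k}E\big[\null_{k}\lambda^{(0)}\big]\leq 0,
\end{equation*}
the middle sum being nonnegative, so that the auxiliary choice $\gamma:=\tfrac12\gamma_{c}\in\mathscr{L}_{loc}^{1}(\R)$ gives $(\gamma_{c,0}-\gamma)^{+}=0$, $\gamma_{c,0}-q\gamma\leq 0$ for $q\in\{0,1,2\}$ and $\gamma\leq 0$ a.e.\ on $[t_{1},\infty)$; hence the integral requirements of Proposition~\ref{pr:supremum moment stability estimate} are met with all of $\overline{c}_{\gamma,-1},\dots,\overline{c}_{\gamma,3}$ equal to $0$. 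Combining this with the uniform window bound $\overline{c}_{0}$ from~\eqref{Assumption 8}, with $E[|Y_{t_{0}}|_{V_{t_{0}}}]<\infty$ and with the local integrability of $\sum_{k}E[\null_{k}\lambda^{(0)}]\theta_{k}^{\beta_{k}}$, Proposition~\ref{pr:supremum moment stability estimate} applies to every pair $(t_{2},t)=(t_{n},t_{n+1})$ (admissible since $t_{n+1}-t_{n}\leq\sup_{m}(t_{m+1}-t_{m})<\hat{\delta}$) and delivers a constant $\overline{c}>0$ that does \emph{not} depend on $n$ — all quantities entering it being fixed — with
\begin{equation*}
E\Big[\sup_{s\in(t_{n},t_{n+1}]}|Y_{s}|_{V_{s}}\Big]\ \leq\ \overline{c}\,\big(M_{0}+\sqrt{M_{0}}\,\big)\,e^{\frac12\int_{t_{1}}^{t_{n}}\gamma_{c}(s)\,ds}\qquad(n\in\N\setminus\{1\}),
\end{equation*}
where $M_{0}:=E[|Y_{t_{0}}|_{V_{t_{0}}}]+\int_{t_{0}}^{t_{1}}E[\kappa_{s}^{(0)}]+\delta_{c}(s)\,ds<\infty$.

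\emph{Step 3: Borel--Cantelli and conclusion.} I would then apply Remark~\ref{re:pathwise asymptotics} to the continuous process $X:=|Y|_{V}$, the sequence $(t_{n})_{n\geq 2}$, the set $A:=\Omega$, $\hat{c}:=\overline{c}(M_{0}+\sqrt{M_{0}})$ and $c_{n}:=\exp(\tfrac12\int_{t_{1}}^{t_{n}}\gamma_{c}(s)\,ds)$: the bound of Step~2 is exactly $E[\sup_{s\in(t_{n},t_{n+1}]}X_{s}\mathbbm{1}_{A}]\leq\hat{c}c_{n}$, while $\sum_{n}c_{n}^{\varepsilon}=\sum_{n}\exp(\tfrac{\varepsilon}{2}\int_{t_{1}}^{t_{n}}\gamma_{c}(s)\,ds)<\infty$ for $\varepsilon\in(0,\hat{\varepsilon})$ is precisely the standing series assumption. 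Hence Lemma~\ref{le:pathwise asymptotic behaviour} yields, for the increasing positive function $\varphi$,
\begin{equation*}
\limsup_{t\uparrow\infty}\frac{\log(|Y_{t}|_{V_{t}})}{\varphi(t)}\ \leq\ \frac{1}{2}\,\limsup_{n\uparrow\infty}\frac{1}{\varphi(t_{n})}\int_{t_{1}}^{t_{n}}\gamma_{c}(s)\,ds\quad\text{a.s.},
\end{equation*}
the infimum $\inf_{s\in(t_{n},t_{n+1}]}\varphi(s)$ occurring in the lemma being identified, by monotonicity of $\varphi$, with $\lim_{s\downarrow t_{n}}\varphi(s)$, which is harmless since the prefactor $\tfrac12\int_{t_{1}}^{t_{n}}\gamma_{c}$ tends to $-\infty$. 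Finally, in the case $\null_{1}\lambda=\cdots=\null_{l}\lambda=0$ the same estimate for every $Y\mathbbm{1}_{A_{N}}$ and a union over $N\in\N$ give the claim for $Y$ itself.

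\emph{Main obstacle.} The one genuinely delicate point is making the moment estimate on $[t_{n},t_{n+1}]$ uniform in $n$: Proposition~\ref{pr:supremum moment stability estimate} needs an auxiliary $\gamma$ for which five integral functionals are controlled over each window, and one has to see that the sign condition $\gamma_{c}\leq 0$ on $[t_{1},\infty)$ built into~\eqref{Assumption 8} (together with the window bound $\overline{c}_{0}$) forces all five to vanish for the specific choice $\gamma=\tfrac12\gamma_{c}$, so that $\overline{c}$ is truly independent of $n$. The remaining ingredients — the localisation in Step~1 and the passage from the interval bounds to the a.s.\ $\limsup$ via the Borel--Cantelli lemma — are routine.
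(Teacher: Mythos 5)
Your proof is correct and follows essentially the same route as the paper: a uniform-in-$n$ window bound from Proposition~\ref{pr:supremum moment stability estimate}, truncation by an initial-$\sigma$-field event when $\null_{1}\lambda=\cdots=\null_{l}\lambda=0$, and Lemma~\ref{le:pathwise asymptotic behaviour} with Remark~\ref{re:pathwise asymptotics}. The only (harmless) cosmetic differences are your choice of auxiliary function $\gamma=\tfrac12\gamma_{c}$ where the paper takes $\gamma=\hat{\alpha}\gamma_{c,0}$ with $\hat{\alpha}\in[0,1/2]$ — both make all five integral constants vanish on $[t_{1},\infty)$ — and your truncation on $\{|Y_{t_{1}}|_{V_{t_{1}}}\leq N\}\in\mathscr{F}_{t_{1}}$ where the paper uses $\{|Y_{t_{0}}|_{V_{t_{0}}}\leq k\}\in\mathscr{F}_{t_{0}}$.
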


\begin{proof}
As $\gamma_{1} = \gamma_{1,0} + \sum_{k=1}^{l}\sum_{i=1}^{m}\beta_{k}E[\lambda^{(i,k)}]$, we have $\gamma_{1,0} \leq 0$ a.e.~on $[t_{1},\infty[$. Thus, if $|Y_{t_{0}}|_{1}$ is integrable, then we may choose $\gamma = \hat{\alpha}\gamma_{1,0}$ with $\hat{\alpha}\in [0,\frac{1}{2}]$ in Proposition~\ref{pr:supremum moment stability estimate} to get $\hat{c} > 0$ such that
\begin{equation*}
E\bigg[\sup_{s\in [t_{n},t_{n+1}]} |Y_{s}|_{1}\bigg] \leq \hat{c} e^{\frac{1}{2}\int_{t_{1}}^{t_{n}}\!\gamma_{1}(s)\,ds}
\end{equation*}
for every $n\in\N$. In the case that $\lambda = 0$ we set $A_{k}:=\{ |Y_{t_{0}}|_{1} \leq k\}$ for fixed $k\in\N$ and note that $Y\mathbbm{1}_{A_{k}}$ is a random It{\^o} process with drift $\B\mathbbm{1}_{A_{k}}$ and diffusion $\Sigma\mathbbm{1}_{A_{k}}$.

Since~\eqref{Assumption 4} and~\eqref{Assumption 5} are satisfied by $(\B\mathbbm{1}_{A_{k}},\Sigma\mathbbm{1}_{A_{k}})$ instead of $(\B,\Sigma)$ such that~\eqref{Assumption 8} holds, Proposition~\ref{pr:supremum moment stability estimate} gives $\hat{c}_{k} > 0$ so that
\begin{equation*}
E\bigg[\sup_{s\in [t_{n},t_{n+1}]} |Y_{s}\mathbbm{1}_{A_{k}}|_{1}\bigg] \leq \hat{c}_{k} e^{\frac{1}{2}\int_{t_{1}}^{t_{n}}\!\gamma_{1}(s)\,ds}
\end{equation*}
for each $n\in\N$. In either case, the asserted implication follows from Lemma~\ref{le:pathwise asymptotic behaviour} and Remark~\ref{re:pathwise asymptotic behaviour} by noting that $\bigcup_{k\in\N} A_{k} = \Omega$.
\end{proof}

\begin{Remark}\label{re:pathwise stability}
Let $\gamma:[t_{0},\infty[\rightarrow\R_{+}$ be a measurable locally integrable function satisfying $\gamma > 0$ a.e.~on $]t_{1},t_{1} + \tilde{\delta}[$ for some $\tilde{\delta} > 0$. If $\gamma_{1} \leq -\gamma$ a.e.~on $[t_{1},\infty[$, then
\begin{equation*}
\limsup_{t\uparrow\infty} \frac{\log(|Y_{t}|_{1})}{\int_{t_{0}}^{t}\!\gamma(s)\,ds} \leq - \frac{1}{2}(1-\hat{\gamma})\quad\text{a.s.}
\end{equation*}
with $\hat{\gamma} := \int_{t_{0}}^{t_{1}}\!\gamma(s)\,ds/\int_{t_{0}}^{\infty}\!\gamma(s)\,ds$. In this case, the sharpest bound is attained for $\hat{\gamma} = 0$, which occurs if and only if $\gamma = 0$ a.e.~on $[t_{0},t_{1}]$ or $\gamma$ fails to be integrable.
\end{Remark}

\section{Proofs of the main results}\label{se:5}

\subsection{Proofs for admissible spaces of probability measures}

\begin{proof}[Proof of Proposition~\ref{pr:admissibility}]
Let $S$ be a metrisable space, $(\tilde{\Omega},\tilde{\mathcal{F}},\tilde{P})$ be a probability space and $X:S\times\tilde{\Omega}\rightarrow\R^{m}$ be a continuous process so that $\mathcal{L}(X_{s})\in\mathcal{P}$ for all $s\in S$. By condition (i), the sequence $(F_{n})_{n\in\mathbb{N}}$ of $\mathcal{P}$-valued maps on $S$ defined via $F_{n}(s) := \mathcal{L}(\varphi_{n}\circ X_{s})$ satisfies $\lim_{n\uparrow\infty} F_{n}(s) = \mathcal{L}(X_{s})$ in $\mathcal{P}$ for any given $s\in S$.

Since $(X_{s_{k}})_{k\in\mathbb{N}}$ converges pointwise to $X_{s}$ for any sequence $(s_{k})_{k\in\mathbb{N}}$ in $S$ converging to $s$, it follows from~(ii) that $\lim_{k\uparrow\infty} F_{n}(s_{k}) = F_{n}(s)$ in $\mathcal{P}$ for each $n\in\mathbb{N}$. Thus, the map $S\rightarrow\mathcal{P}$, $s\mapsto \mathcal{L}(X_{s})$ is Borel measurable as pointwise limit of a sequence of continuous maps.
\end{proof}

\begin{proof}[Proof of Corollary~\ref{co:admissibility}]
We show that the two conditions of Proposition~\ref{pr:admissibility} are met by the sequence $(\varphi_{n})_{n\in\N}$ of radial retractions, introduced in Example~\ref{ex:radial retraction}. 

Let $\mu\in\mathcal{P}$ and note that~(i) directly yields $\mu\circ\varphi_{n}^{-1}\in\mathcal{P}$ for fixed $n\in\N$. If we define $\phi_{n}:\R^{m}\rightarrow\R^{m}\times\R^{m}$ by $\phi_{n}(x) := (\varphi_{n}(x),x)$, then $\theta_{n}:=\mu\circ\phi_{n}^{-1}$ belongs to $\mathcal{P}(\mu\circ\varphi_{n}^{-1},\mu)$ and
\begin{equation*}
\int_{\R^{m}\times\R^{m}}\!\rho(|x-y|)\,d\theta_{n}(x,y) = \int_{\R^{m}}\!\rho(|\varphi_{n}(x) - x|)\,\mu(dx),
\end{equation*}
by the measure transformation formula. Since $\rho(|\varphi_{n}(x) - x|) \leq \rho(|x|)$ for all $x\in\R^{m}$, the integral on the right-hand side converges to zero as $n\uparrow\infty$, by dominated convergence. Thus, from~(ii) we infer that $\lim_{n\uparrow\infty} \mu\circ\varphi_{n}^{-1} = \mu$ in $\mathcal{P}$.

Now let $(\mu_{k})_{k\in\N}$ be a sequence in $\mathcal{P}$ that converges stochastically to some $\mu\in\mathcal{P}$. That is, there is a sequence $(\theta_{k})_{k\in\N}$ of Borel measures on $\R^{m}\times\R^{m}$ such that $\theta_{k}\in\mathcal{P}(\mu_{k},\mu)$ for any $k\in\N$ and~\eqref{eq:stochastic convergence of measures} holds.

For fixed $n\in\N$ and $\psi_{n}:\R^{m}\times\R^{m}\rightarrow\R^{m}\times\R^{m}$ given by $\psi_{n}(x,y) := (\varphi_{n}(x),\varphi_{n}(y))$, the measure $\hat{\theta}_{k}:=\theta_{k}\circ\psi_{n}^{-1}$ lies in $\mathcal{P}(\mu_{k}\circ\varphi_{n}^{-1},\mu\circ\varphi_{n}^{-1})$ and the measure transformation formula ensures that
\begin{equation*}
\int_{\R^{m}\times\R^{m}}\!\rho(|x-y|)\,d\hat{\theta}_{k}(x,y) = \int_{\R^{m}\times\R^{m}}\!\rho(|\varphi_{n}(x) - \varphi_{n}(y)|)\,d\theta_{k}(x,y)
\end{equation*}
for all $k\in\N$. For given $\varepsilon > 0$ we choose $\delta > 0$ and $k_{0}\in\N$ such that $\rho(2\delta) < \varepsilon/2$ and $\theta_{k}(\{(x,y)\in\R^{m}\times\R^{m}\,|\,|x-y|\geq \delta\}) < (\varepsilon/2)(1 + \rho(2n))^{-1}$ for any $k\in\N$ with $k\geq k_{0}$. Then the integral on the right-hand side cannot exceed $\varepsilon$ for every such $k\in\N$. This shows $\lim_{k\uparrow\infty}\mu_{k}\circ\varphi_{n}^{-1} = \mu\circ\varphi_{n}^{-1}$ in $\mathcal{P}$ and the assertion follows.
\end{proof}

\subsection{Proofs of the moment estimates, uniqueness and moment stability}

\begin{proof}[Proof of Proposition~\ref{pr:abstract stability estimate}]
We define two progressively measurable processes $\hat{\B}$ and $\hat{\Sigma}$ with values in $\R^{m}$ and $\R^{m\times d}$, respectively, by
\begin{equation}\label{eq:drift and diffusion difference}
\hat{\B}_{s} := \B_{s}\big(X_{s},\mathcal{L}(X_{s})\big) - \tilde{\B}_{s}\big(\tilde{X}_{s},\mathcal{L}(\tilde{X}_{s})\big)\quad\text{and}\quad \hat{\Sigma}_{s} := \Sigma_{s}(X_{s}) - \Sigma_{s}(\tilde{X}_{s}).
\end{equation}
Then the difference $Y$ of $X$ and $\tilde{X}$ is a random It{\^o} process with drift $\hat{\B}$ and diffusion $\hat{\Sigma}$ such that
\begin{equation*}
\mathrm{sgn}(Y^{(i)})\hat{\B}^{(i)} \leq \varepsilon^{(i)} + \eta^{(i)}\rho(|Y|_{1}) + \lambda^{(i)}\varrho\circ\theta\quad\text{a.s.}
\end{equation*}
for all $i\in\{1,\dots,m\}$ with the measurable function $\theta :=\vartheta(\mathcal{L}(X),\mathcal{L}(\tilde{X}))$. Hence, the assertion follows from an application of Theorem~\ref{th:abstract moment estimate}.
\end{proof}

\begin{proof}[Proof of Corollary~\ref{co:pathwise uniqueness}]
To show uniqueness in both cases, we suppose that $X$ and $\tilde{X}$ are two solutions to~\eqref{eq:McKean-Vlasov} such that $X_{t_{0}} = \tilde{X}_{t_{0}}$ a.s. In case (i) we require, as stated, the local integrability of the function $\Theta(\cdot,\mathcal{L}(X),\mathcal{L}(\tilde{X}),\mathcal{L}(X - \tilde{X}))$, which equals
\begin{equation*}
E\big[|\lambda|_{1}\big]\varrho\big(\vartheta(\mathcal{L}(X),\mathcal{L}(\tilde{X}))\big) + \mathbbm{1}_{]0,\infty[}\big(\Phi_{\rho}(\infty)\big)\eta E\big[\rho(|X-\tilde{X}|_{1})\big].
\end{equation*}
Then $E[|X_{t} - \tilde{X}_{t}|_{1}]$ vanishes for all $t\geq t_{0}$, due to Proposition~\ref{pr:abstract stability estimate}. Indeed, $\varrho_{0} := \rho\vee\varrho$ satisfies $(0,w)\in D_{\varrho_{0}}$ and $\Psi_{\varrho_{0}}(0,w) = 0$ for all $w\geq 0$, as $\Phi_{\varrho_{0}}(0) = - \infty$. So, $X= \tilde{X}$ a.s., by the continuity of paths.

In case (ii) we set $\tau_{n} := \inf\{t\geq t_{0}\,|\,|X_{t}|\geq n\text{ or } |\tilde{X}_{t}| \geq n\}$ for fixed $n\in\N$ and observe that the difference $Y$ of $X^{\tau_{n}}$ and $\tilde{X}^{\tau_{n}}$ is a random It{\^o} process with drift $\tilde{\B}$ and diffusion $\tilde{\Sigma}$ given by
\begin{equation*}
\tilde{\B}_{s} := \big(\hat{\B}_{s}(X_{s}) - \hat{\B}_{s}(\tilde{X}_{s})\big)\mathbbm{1}_{\{\tau_{n} > s\}}\quad\text{and}\quad \tilde{\Sigma}_{s} := \big(\Sigma_{s}(X_{s}) - \Sigma_{s}(\tilde{X}_{s})\big)\mathbbm{1}_{\{\tau_{n} > s\}}.
\end{equation*}
Then $\mathrm{sgn}(Y^{(i)})\tilde{\B}^{(i)} \leq \eta_{n}\rho_{n}(|Y|_{1})$ a.s.~for each $i\in\{1,\dots,m\}$. Thus, Theorem~\ref{th:abstract moment estimate} gives $Y = 0$ a.s. From $\sup_{n\in\N} \tau_{n} = \infty$ we conclude that $X = \tilde{X}$ a.s.
\end{proof}

\begin{proof}[Proof of Proposition~\ref{pr:moment stability estimate}]
As in the proof of Proposition~\ref{pr:abstract stability estimate}, we let the two processes $\hat{\B}$ and $\hat{\Sigma}$ with values in $\R^{m}$ and $\R^{m\times d}$, respectively, be given by~\eqref{eq:drift and diffusion difference}. Then
\begin{equation*}
\mathrm{sgn}(Y^{(i)})Y^{(i)} \leq \varepsilon^{(i)} + \sum_{k=1}^{l}\bigg(\sum_{j=1}^{m}\eta^{(i,j,k)}|Y^{(j)}|^{\alpha_{k}}\bigg) + \lambda^{(i,k)}\vartheta\big(\mathcal{L}(X),\mathcal{L}(\tilde{X})\big)^{\beta_{k}}\quad\text{a.s.}
\end{equation*}
for any $i\in\{1,\dots,m\}$. For this reason, all assertions are implied by Theorem~\ref{th:moment stability estimate}.
\end{proof}

\begin{proof}[Proof of Corollary~\ref{co:moment stability}]
By Definition~\ref{de:stability}, the stability assertions directly follow from Proposition~\ref{pr:moment stability estimate} in the case that $\B=\tilde{\B}$ and $\varepsilon=0$.
\end{proof}

\begin{proof}[Proof of Corollary~\ref{co:exponential moment stability}]
(i) Based on Proposition~\ref{pr:moment stability estimate}, we immediately infer both claims from the reasoning in Remark~\ref{re:moment stability}.

(ii) Let $X$ and $\tilde{X}$ be two solutions to~\eqref{eq:McKean-Vlasov} for which $E[|X_{t_{0}}-\tilde{X}_{t_{0}}|]$ is finite and $E[|\lambda|_{1}]\vartheta(\mathcal{L}(X),\mathcal{L}(\tilde{X}))$ is locally integrable. Then Proposition~\ref{pr:moment stability estimate} gives
\begin{equation*}
E\big[|X_{t} - \tilde{X}_{t}|\big] \leq \sqrt{m}e^{\int_{t_{0}}^{t}\!\gamma_{1}(s)\,ds}E\big[|X_{t_{0}} - \tilde{X}_{t_{0}}|\big]
\end{equation*}
for each $t\geq t_{0}$. Thus, for the first assertion let $P(X_{t_{0}}\neq \tilde{X}_{t_{0}}) > 0$, as otherwise $E[|X-\tilde{X}|]$ vanishes. Then from~\eqref{co:7} we get that
\begin{equation*}
\limsup_{t\uparrow\infty}\frac{1}{t^{\alpha_{l}}}\log\big(E\big[|X_{t} - \tilde{X}_{t}|\big]\big) \leq \limsup_{t\uparrow\infty} \sum_{k=1}^{l} \hat{\lambda}_{k}\frac{(t-s_{k})^{\alpha_{k}} - (t_{1} - s_{k})^{\alpha_{k}}}{t^{\alpha_{l}}} =  \hat{\lambda}_{l},
\end{equation*}
since $\lim_{t\uparrow\infty} (t-s_{k})^{\alpha_{k}}/t^{\alpha_{l}}$ $= \mathbbm{1}_{\{l\}}(k)$ for any $k\in\{1,\dots,l\}$. Now Remark~\ref{re:Lyapunov exponent} yields the correct result. 

For the second assertion it is sufficient to consider the case $l=1$. First, we set $\hat{c}_{0}:=\max_{t\in [t_{0},t_{1}] }\exp(\int_{t_{0}}^{t}\!\gamma_{1}(s)\,ds - \hat{\lambda}_{1}(t-t_{0})^{\alpha_{1}})$ and get that
\begin{equation}\label{eq:specific exponential moment stability}
E\big[|X_{t} - \tilde{X}_{t}|\big] \leq \sqrt{m}\hat{c}_{0}e^{\hat{\lambda}_{1}(t-t_{0})^{\alpha_{1}}}E\big[|X_{t_{0}} - \tilde{X}_{t_{0}}|\big]
\end{equation}
for each $t\in [t_{0},t_{1}]$. Since $\int_{t_{1}}^{t}\!\gamma_{1}(s)\,ds \leq \hat{\lambda}_{1}((t- s_{1})^{\alpha_{1}} - (t_{1} - s_{1})^{\alpha_{1}})$ for all fixed $t\geq t_{1}$, we see that~\eqref{eq:specific exponential moment stability} holds if we replace $\hat{c}_{0}$ by $\hat{c}:= \hat{c}_{0}\vee\exp(\int_{t_{0}}^{t_{1}}\gamma_{1}(s)\,ds -\hat{\lambda}_{1}(t_{1}-s_{1})^{\alpha_{1}})$.
\end{proof}

\subsection{Proofs for pathwise stability and the moment growth estimates}

\begin{proof}[Proof of Proposition~\ref{pr:specific pathwise asymptotic analysis}]
As we have seen, $Y$ is a random It{\^o} process with drift $\hat{\B}$ and diffusion $\hat{\Sigma}$ given by~\eqref{eq:drift and diffusion difference} when $\tilde{\B} = \B$. Therefore, Theorem~\ref{th:pathwise stability} entails the claim.
\end{proof}

For the proof of Corollary~\ref{co:special pathwise stability} we need to check whether the series in~\eqref{co:10} converges when the upper bound for $\gamma_{1}$ in~\eqref{co:7} is used.

\begin{Lemma}\label{le:integrability of exponential power functions}
Let $l\in\mathbb{N}$, $\alpha\in ]0,\infty[^{l}$ and $\beta,s\in\R^{l}$ satisfy $\alpha_{1} < \cdots < \alpha_{l}$, $\beta_{l} < 0$ and $\max_{k=1,\dots,l} s_{k} \leq t_{1}$ for some $t_{1}\geq 0$. Then
\begin{equation*}
\int_{0}^{\infty}\!\exp\bigg(\varepsilon\sum_{k=1}^{l}\beta_{k}\int_{t_{1}}^{t_{1} + \delta t}\!\alpha_{k}(s-s_{k})^{\alpha_{k}-1}\,ds\bigg)\,dt < \infty\quad\text{for all $\delta,\varepsilon > 0$.}
\end{equation*}
\end{Lemma}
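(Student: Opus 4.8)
The plan is to carry out the inner integral explicitly and then analyse the growth of the resulting exponent as $t\uparrow\infty$. By the fundamental theorem of calculus,
\begin{equation*}
\int_{t_{1}}^{t_{1}+\delta t}\!\alpha_{k}(s-s_{k})^{\alpha_{k}-1}\,ds = (t_{1}+\delta t-s_{k})^{\alpha_{k}} - (t_{1}-s_{k})^{\alpha_{k}},
\end{equation*}
which is meaningful since $s_{k}\le t_{1}$ forces $t_{1}-s_{k}\ge 0$ and $t_{1}+\delta t-s_{k}\ge\delta t\ge 0$, so every base raised to a positive power is non-negative. Hence the integrand equals $e^{g(t)}$, where
\begin{equation*}
g(t):=\varepsilon\sum_{k=1}^{l}\beta_{k}\big((t_{1}+\delta t-s_{k})^{\alpha_{k}} - (t_{1}-s_{k})^{\alpha_{k}}\big)\quad\text{for }t\in\R_{+}
\end{equation*}
is a continuous function on $[0,\infty)$.

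Next I would identify the leading-order term of $g$. Dividing by $t^{\alpha_{l}}$ and using $\alpha_{1}<\cdots<\alpha_{l}$, the summand for each $k<l$ carries a factor $t^{\alpha_{k}-\alpha_{l}}\to 0$, the summand for $k=l$ satisfies $(t_{1}+\delta t-s_{l})^{\alpha_{l}}/t^{\alpha_{l}}\to\delta^{\alpha_{l}}$, and each constant part obeys $(t_{1}-s_{k})^{\alpha_{k}}/t^{\alpha_{l}}\to 0$. Thus $\lim_{t\uparrow\infty} g(t)/t^{\alpha_{l}} = \varepsilon\beta_{l}\delta^{\alpha_{l}}$, which is strictly negative because $\beta_{l}<0$. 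Consequently there is $T>0$ with $g(t)\le -(c/2)\,t^{\alpha_{l}}$ for all $t\ge T$, where $c:=-\varepsilon\beta_{l}\delta^{\alpha_{l}}>0$.

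It then remains to split $\int_{0}^{\infty}e^{g(t)}\,dt$ at $T$: on $[0,T]$ the integrand is bounded, since $g$ is continuous on a compact interval, and on $[T,\infty)$ it is dominated by $e^{-(c/2)t^{\alpha_{l}}}$, which is integrable because $\alpha_{l}>0$ (e.g.\ via the substitution $u=t^{\alpha_{l}}$, or since $e^{-(c/2)t^{\alpha_{l}}}=o(t^{-2})$). Adding the two finite contributions yields the claim. The only mildly delicate point is the bookkeeping in the second paragraph: the coefficients $\beta_{k}$ with $k<l$ may be positive, so one must genuinely exploit the strict gap $\alpha_{l}-\alpha_{k}>0$ to see that those terms are $o(t^{\alpha_{l}})$ and cannot overturn the negative contribution from $\beta_{l}\delta^{\alpha_{l}}$; beyond this I do not anticipate any real obstacle.
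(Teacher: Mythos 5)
Your proof is correct and follows essentially the same route as the paper: compute the inner integral explicitly, observe that the exponent is asymptotically $\varepsilon\beta_{l}(\delta t)^{\alpha_{l}}$ because the lower powers $\alpha_{k}<\alpha_{l}$ and the constant terms are $o(t^{\alpha_{l}})$, and conclude by comparison with the integrable function $e^{-\gamma t^{\alpha_{l}}}$. The only cosmetic difference is that the paper first normalises $\varepsilon=1$ by absorbing it into $\beta$, which you handle by carrying $\varepsilon$ through.
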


\begin{proof}
It suffices to consider the case $\varepsilon = 1$, since $\beta$ may be replaced by $\varepsilon\beta$. We set $c:= - \sum_{k=1}^{l}\beta_{k}(t_{1} - s_{k})^{\alpha_{k}}$ and readily note that there is $t_{2} > 0$ such that
\begin{equation*}
\sum_{k=1}^{l}\beta_{k}\int_{t_{1}}^{t_{1} + \delta t}\alpha_{k}(s-s_{k})^{\alpha_{k}-1}\,ds = c + \sum_{k=1}^{l}\beta_{k}(t_{1} + \delta t - s_{k})^{\alpha_{k}} \leq \frac{\beta_{l}}{2}(\delta t)^{\alpha_{l}}
\end{equation*}
for all $t\geq t_{2}$. Further, a substitution shows us that $\int_{0}^{\infty}e^{- c(\delta t)^{\alpha_{l}}}\,dt$ $= \frac{1}{\alpha_{l}\delta} c^{-\frac{1}{\alpha_{l}}}\Gamma(\frac{1}{\alpha_{l}})$ for any $c > 0$, where $\Gamma$ is the gamma function. So, the claim follows.
\end{proof}

\begin{proof}[Proof of Corollary~\ref{co:special pathwise stability}]
To apply Proposition~\ref{pr:specific pathwise asymptotic analysis}, we verify~\eqref{co:10}. For this purpose, we choose $\hat{t}_{1}\geq t_{1},$ such that $\gamma_{1}\leq 0$ a.e.~on $[\hat{t}_{1},\infty[$ and define a sequence $(t_{n})_{n\in\N\setminus\{1\}}$ in $[t_{0},\infty[$ by $t_{n}:= \hat{t}_{1} + \tilde{\delta}(n-1)$ for some $\tilde{\delta} > 0$. The integral test for the convergence of series, recalled in~\eqref{eq:integral test condition}, shows that
\begin{equation*}
\sum_{n=1}^{\infty}e^{\varepsilon\int_{\hat{t}_{1}}^{t_{n}}\!\gamma_{1}(s)\,ds} < \infty\quad\Leftrightarrow\quad \int_{0}^{\infty}\exp\bigg(\varepsilon\int_{\hat{t}_{1}}^{\hat{t}_{1} + \tilde{\delta}t}\!\gamma_{1}(s)\,ds\bigg)\,dt < \infty
\end{equation*}
for any given $\varepsilon > 0$ and the latter condition is always satisfied, due to the imposed upper bound on $\gamma_{1}$ in~\eqref{co:7} and Lemma~\ref{le:integrability of exponential power functions}. 
 
Thus, for~\eqref{co:10} to be valid, it suffices to take $\tilde{\delta} < \hat{\delta}$. Then, by Proposition~\ref{pr:specific pathwise asymptotic analysis}, the difference $Y$ of any two solutions $X$ and $\tilde{X}$ to~\eqref{eq:McKean-Vlasov} for which $E[|\lambda|_{1}]\vartheta(\mathcal{L}(X),\mathcal{L}(\tilde{X}))$ is locally integrable satisfies
\begin{equation*}
\limsup_{t\uparrow\infty} \frac{\log\big(|Y_{t}|_{1}\big)}{t^{\alpha_{l}}} \leq  \frac{1}{2}\limsup_{n\uparrow\infty} \sum_{k=1}^{l}\hat{\lambda}_{k}\frac{(t_{n} - s_{k})^{\alpha_{k}} - (\hat{t}_{1} - s_{k})^{\alpha_{k}}}{t_{n}^{\alpha_{l}}} = \frac{\hat{\lambda}_{l}}{2}\quad\text{a.s.}
\end{equation*}
as soon as $E[|Y_{t_{0}}|_{1}] < \infty$ or $\B$ is independent of $\mu\in\mathcal{P}$.
\end{proof}

\begin{proof}[Proof of Lemma~\ref{le:abstract growth estimate}]
Because $X$ is a random It{\^o} process with drift $\hat{\B}$ and $\hat{\Sigma}$ defined by $\hat{\B} := \B(X,\mathcal{L}(X))$ and $\hat{\Sigma}:=\Sigma(X)$, the claim is a direct consequence of Theorem~\ref{th:abstract moment estimate}.
\end{proof}

\begin{proof}[Proof of Lemma~\ref{le:moment growth estimate}]
By the same reasoning as in Lemma~\ref{le:abstract growth estimate}, the assertions follow from an application of Theorem~\ref{th:moment stability estimate}. 
\end{proof}

\subsection{Derivation of strong solutions}

\begin{proof}[Proof of Proposition~\ref{pr:mu-SDE}]
(i) As the partial uniform continuity condition~\eqref{co:4} holds in the case that $\B = b_{\mu}$, pathwise uniqueness for~\eqref{eq:mu-SDE} is implied by Corollary~\ref{co:pathwise uniqueness}.

(ii) We shall first suppose that $\xi$ is essentially bounded. Then the support of $\mathcal{L}(\xi)$ is compact and it essentially follows from Theorem 2.3 in~\cite{IkeWat81}[Chapter IV] that there is a local weak solution $\tilde{X}$ to~\eqref{eq:mu-SDE}.

By using the one-point compactification of $\R^{m}$, we can view $\tilde{X}$ as an $\R^{m}\cup\{\infty\}$-valued adapted continuous process on a filtered probability space $(\tilde{\Omega},\tilde{\mathcal{F}},(\tilde{\mathcal{F}}_{t})_{t\geq 0},\tilde{P})$ that allows for an $(\tilde{\mathcal{F}}_{t})_{t\geq 0}$-Brownian motion $\tilde{W}$ such that the usual and the following conditions are satisfied:
\begin{enumerate}[(1)]
\item If $\tilde{X}_{s}(\omega) = \infty$ for some $(s,\omega)\in [t_{0},\infty[\times\Omega$, then $\tilde{X}_{t}(\omega) = \infty$ for all $t \geq s$.

\item $\mathcal{L}(\tilde{X}_{t_{0}}) = \mathcal{L}(\xi)$ and the supremum $\tau$ of the sequence $(\tau_{n})_{n\in\mathbb{N}}$ of stopping times given by $\tau_{n} := \inf\{t\geq t_{0}\,|\,|\tilde{X}_{t}| \geq n\}$ satisfies $\tau > t_{0}$ a.s.

\item For any $n\in\mathbb{N}$ the process $\tilde{X}^{\tau_{n}}$ is a solution to~\eqref{eq:McKean-Vlasov} relative to $\tilde{W}$ when $\B$ and $\Sigma$ are replaced by the admissible maps
\begin{equation*}
[t_{0},\infty[\times\tilde{\Omega}\times\R^{m}\rightarrow\R^{m},\quad (s,\tilde{\omega},x)\mapsto b_{\mu}(s,x)\mathbbm{1}_{\{\tau_{n} > s\}}(\tilde{\omega})
\end{equation*}
and $[t_{0},\infty[\times\tilde{\Omega}\times\R^{m}\rightarrow\R^{m\times d}$, $(s,\tilde{\omega},x)\mapsto\sigma(s,x)\mathbbm{1}_{\{\tau_{n} > s\}}(\tilde{\omega})$, respectively.
\end{enumerate}

We readily observe that~\eqref{det.co:2} implies the partial growth condition~\eqref{co:12} for $b_{\mu}$ instead of $\B$. In fact, we may define $\kappa_{\mu}:[t_{0},\infty[\rightarrow\R_{+}^{m}$ coordinatewise by $(\kappa_{\mu})_{i} := \kappa_{i} + \chi_{i}\varphi(\vartheta(\mu,\delta_{0}))$ and get that
\begin{equation*}
\mathrm{sgn}(x_{i})(b_{\mu})_{i}(s,x)\mathbbm{1}_{\{\hat{\tau} > s\}} \leq (\kappa_{\mu})_{i}(s) + \upsilon_{i}(s)\phi(|x|_{1})
\end{equation*}
for any $(s,x)\in [t_{0},\infty[\times\R^{m}$, all $i\in\{1,\dots,m\}$ and each stopping time $\hat{\tau}$. Further, $|e_{i}'\sigma(s,\cdot)\mathbbm{1}_{\{\hat{\tau} > s\}}| \leq |e_{i}'\sigma(s,\cdot)|$. Hence, we infer from Fatou's lemma that
\begin{equation}\label{eq:weak solution estimate}
\tilde{E}[|\tilde{X}_{t}^{\tau}|_{1}] \leq \liminf_{n\uparrow\infty} \tilde{E}[|\tilde{X}_{t}^{\tau_{n}}|_{1}]\\
\leq \Psi_{\phi}\bigg(E\big[|\xi|_{1}\big] + \int_{t_{0}}^{t}\!|\kappa_{\mu}(s)|_{1}\,ds,\int_{t_{0}}^{t}\!|\upsilon(s)|_{1}\,ds\bigg)
\end{equation}
for all $t\geq t_{0}$, by the virtue of Lemma~\ref{le:abstract growth estimate}. In particular, $\tau = \infty$ and $\tilde{X}\in\R^{m}$ $\tilde{P}$-a.s. So, $X:[t_{0},\infty[\times\tilde{\Omega}\rightarrow\R^{m}$ given by $X_{t}(\tilde{\omega}):=\tilde{X}_{t}(\tilde{\omega})$, if $\tau(\tilde{\omega}) = \infty$, and $X_{t}(\tilde{\omega}):= 0$, otherwise, is a weak solution to~\eqref{eq:mu-SDE} and  $\tilde{E}[|X|]$ is locally bounded.

Now we remove the boundedness hypothesis on $\xi$. As we have shown that to each $x\in\R^{m}$ there is a weak solution to~\eqref{eq:mu-SDE} with initial value condition $x$, it follows from Remark 2.1 in~\cite{IkeWat81}[Chapter IV] that there exists a weak solution $X$ to~\eqref{eq:mu-SDE} satisfying $X_{t_{0}} = \xi$ a.s. Further, its absolute moment function is locally bounded if $\xi$ is integrable, as it cannot exceed the right-hand estimate in~\eqref{eq:weak solution estimate} in this case, due to Lemma~\ref{le:abstract growth estimate}.

(iii) By the first two assertions, we have pathwise uniqueness for~\eqref{eq:mu-SDE} and there is a weak solution for any $\R^{m}$-valued $\mathcal{F}_{t_{0}}$-measurable random vector serving as initial condition. As postulated by Theorem 1.1 in~\cite{IkeWat81}[Chapter IV], there is a unique strong solution $X^{\xi,\mu}$ to~\eqref{eq:mu-SDE} with $X_{t_{0}}^{\xi,\mu} = \xi$ a.s.
\end{proof}

\begin{proof}[Proof of Theorem~\ref{th:strong existence}]
(i) and (ii) Pathwise uniqueness is an immediate consequence of Corollary~\ref{co:pathwise uniqueness} and Remark~\ref{re:pathwise uniqueness}. In particular, there is at most a unique solution $X$ to~\eqref{eq:deterministic McKean-Vlasov} such that $X_{t_{0}} = \xi$ a.s.~and $E[|X|]$ is locally bounded, since $\lambda$ is locally integrable.

Regarding existence, we recall that for any $\mu\in B_{b,loc}(\mathcal{P})$ there is a unique strong solution $X^{\xi,\mu}$ to~\eqref{eq:mu-SDE} such that $X_{t_{0}}^{\xi,\mu} = \xi$ a.s.~and, as $\xi$ is integrable, $E[|X^{\xi,\mu}|]$ is locally bounded, by Proposition~\ref{pr:mu-SDE}. The definition of $b_{\mu}$ entails that $X^{\xi,\mu}$ solves~\eqref{eq:deterministic McKean-Vlasov} if $\mu$ is a fixed-point of the operator
\begin{equation*}
\Psi:B_{b,loc}(\mathcal{P})\rightarrow B_{b,loc}(\mathcal{P}_{1}(\R^{m})),\quad  \Psi(\nu)(t):=\mathcal{L}(X_{t}^{\xi,\nu}).
\end{equation*}
In this case, $X^{\xi,\mu}$ must be a strong solution. For any $\mu,\tilde{\mu}\in B_{b,loc}(\mathcal{P})$ we readily see that condition~\eqref{co:5} holds for $(b_{\mu},b_{\tilde{\mu}})$ instead of $(\B,\tilde{\B})$. For this reason, Proposition~\ref{pr:moment stability estimate} yields that
\begin{equation}\label{eq:fixed-point condition}
\vartheta_{1}\big(\Psi(\mu),\Psi(\tilde{\mu})\big)(t) \leq E\big[|X_{t}^{\xi,\mu} - X_{t}^{\xi,\tilde{\mu}}|\big] \leq \int_{t_{0}}^{t}\!e^{\int_{s}^{t}\!\gamma_{1,0}(\tilde{s})\,d\tilde{s}}|\lambda(s)|_{1}\vartheta(\mu,\tilde{\mu})(s)\,ds
\end{equation}
for each $t\geq t_{0}$. We notice that the function $[s,\infty[\rightarrow\R_{+}$, $t\mapsto \exp(\int_{s}^{t}\!\gamma_{1,0}^{+}(\tilde{s})\,d\tilde{s})|\lambda(s)|_{1}$ is increasing for any $s\geq t_{0}$. Consequently, Gronwall's inequality entails that $\Psi$ admits at most a unique fixed-point.

As $B_{b,loc}(\mathcal{P}_{1}(\R^{m}))$ is completely metrisable, existence and the error estimate~\eqref{eq:error estimate}, which implies the local uniform convergence assertion, follow from an application of the fixed-point theorem for time evolution operators in~\cite{Kal24}. In fact,
\begin{equation}\label{eq:auxiliary error estimate}
\vartheta_{1}(\mu_{m},\mu_{n})(t) \leq \Delta(t)\sum_{i=n}^{m-1}\frac{1}{i}\bigg(\int_{t_{0}}^{t}e^{\int_{s}^{t}\gamma_{1,0}(\tilde{s})\,d\tilde{s}}|\lambda(s)|_{1}\,ds\bigg)^{i}
\end{equation}
for any $m,n\in\N$ with $m > n$ and $t\geq t_{0}$, by induction and the triangle inequality. So, $(\mu_{n})_{n\in\N}$ is a Cauchy sequence in $B_{b,loc}(\mathcal{P}_{1}(\R^{m}))$ and, due to~\eqref{eq:fixed-point condition}, its limit $\mu$ is a fixed-point of $\Psi$. Hence,~\eqref{eq:error estimate} follows from~\eqref{eq:auxiliary error estimate} by taking the limit $m\uparrow\infty$.

(iii) Since the bound in~\eqref{eq:growth estimate} is independent of $\mu\in B_{b,loc}(\mathcal{P}_{1}(\R^{m}))$, the set $M$ is closed and convex. We set $g_{1,1} := \sum_{k=1}^{l}\beta_{k}\sum_{i=1}^{m}\chi_{i,k}$ and $g_{1,0} = g_{1} - g_{1,1}$. Then
\begin{equation*}
\vartheta_{1}(\Psi(\mu)(t),\delta_{0}) \leq e^{\int_{t_{0}}^{t}\!g_{1,0}(s)\,ds}E\big[|\xi|_{1}\big] + \int_{t_{0}}^{t}\!e^{\int_{s}^{t}\!g_{1,0}(\tilde{s})\,d\tilde{s}}\big(|\kappa|_{1} + h_{1} + g_{1,1}\vartheta_{1}(\mu,\delta_{0})\big)(s)\,ds
\end{equation*}
for all $\mu\in B_{b,loc}(\mathcal{P})$ and $t\geq t_{0}$, by Lemma~\ref{le:moment growth estimate} and Young's inequality. Then it follows from the Fundamental Theorem of Calculus for Lebesgue-Stieltjes integrals and Fubini's theorem that $\Psi$ maps $M$ into itself. Hence, the claim holds.
\end{proof}

\noindent
{\bf Acknowledgements}
\smallskip

\noindent
The authors thank an anonymous referee for instructive comments that were helpful to improve the paper.

\let\OLDthebibliography\thebibliography
\renewcommand\thebibliography[1]{
  \OLDthebibliography{#1}
  \setlength{\parskip}{0pt}
  \setlength{\itemsep}{2pt}}

\end{document}